\theoremstyle{plain} 
\newtheorem{thm}{Theorem}[section]
\newtheorem{cor}[thm]{Corollary}
\newtheorem{prop}[thm]{Proposition}
\newtheorem{lem}[thm]{Lemma}
\theoremstyle{remark} 
\newtheorem{rem}[thm]{Remark}
\theoremstyle{definition} 
\newtheorem{defn}[thm]{Definition}
\newtheorem*{mainthm}{Theorem \ref{t:main}}
\newcommand{\cF}{\ensuremath{\mathscr F}}
\newcommand{\cO}{\ensuremath{\mathscr O}}
\newcommand{\cHom}{\ensuremath{\mathscr Hom}}
\newcommand{\scHom}{\ensuremath{s\mathscr{H}om}}
\newcommand{\set}[1]{\ensuremath{\left\lbrace #1\right\rbrace}}
\newcommand{\inv}{\ensuremath{{-1}}}
\newcommand{\Q}{\ensuremath{\mathbb{Q}}}
\newcommand{\C}{\ensuremath{\mathbb{C}}}
\newcommand{\Z}{\ensuremath{\mathbb{Z}}}
\newcommand{\del}{\ensuremath{\partial}}
\DeclareMathOperator{\HH}{\bf{H}}
\DeclareMathOperator{\Tot}{Tot}
\DeclareMathOperator{\Hom}{Hom}
\DeclareMathOperator{\RHom}{RHom}
\DeclareMathOperator{\Ext}{Ext}
\DeclareMathOperator{\Ab}{Ab}
\DeclareMathOperator{\Sh}{Sh}
\DeclareMathOperator{\Pre}{Pre}
\DeclareMathOperator{\Ch}{Ch}
\DeclareMathOperator{\id}{id}
\DeclareMathOperator{\sk}{sk}
\DeclareMathOperator{\cosk}{cosk}
\DeclareMathOperator{\codim}{codim}
\DeclareMathOperator{\Sch}{Sch}
\DeclareMathOperator{\Sm}{Sm}
\DeclareMathOperator{\Zar}{Zar}
\DeclareMathOperator{\trace}{trace}
\newcommand{\dt}{\ensuremath{\bullet}}
\newcommand{\blank}{\ensuremath{\underline{\quad}}}
\newcommand{\uDelta}{\ensuremath{\underline{\Delta}}}
\newcommand{\udDelta}{\ensuremath{\underline{\del\Delta}}}
\newcommand{\dDelta}{\ensuremath{\del\Delta}}
\begin{document}

\title{Local acyclic fibrations and the De Rham complex}
\author{Ben Lee}
\date{\today}

\begin{abstract}
  We reinterpret algebraic de Rham cohomology for a possibly singular
  complex variety $X$ as sheaf cohomology in the site of smooth
  schemes over $X$ with Voevodsky's $h$-topology.  Our results extend
  to the algebraic de Rham complex as well.  Our main technique is to
  extend \v{C}ech cohomology of hypercovers to arbitrary local acyclic
  fibrations of simplicial presheaves.
\end{abstract}

\maketitle

\section{INTRODUCTION} 

Let $X$ be a separated scheme finite type over the complex numbers
$\C$.  Following Deligne, Du Bois (\cite{MR613848}) constructs the
algebraic de Rham complex of $X$ \[ \underline{\Omega^\dt_{X/\C}} :=
Re_*\Omega^\dt_{X_\dt/\C}
\] by a choice of a smooth proper hypercover.  It is well-defined in
the filtered derived category.  Morally this \v{C}ech complex should
be a derived direct image from some topos to the Zariski site; showing
this is the aim of this paper.

The choice of topos appears to be a delicate matter.  Using the
topology of ``universal cohomological descent'' (which we abbreviate
``$ucd$'') on proper and smooth schemes turns out to be technically
inconvenient.  We use instead Voevodsky's $h$-topology
\cite{MR1764199} on possibly open schemes.  Denote by $\Sm_h/X$ the
category of smooth separated schemes finite type over $X$, equipped
with the $h$-topology.  We show the presheaf $\Omega^q$ is a sheaf on
$\Sm_h / X$.  There is a direct image $\gamma_*$ from sheaves on
$\Sm_h/X$ to sheaves on the small Zariski site $X_{\Zar}$.

Unfortunately we cannot directly apply Verdier's work on \v{C}ech
cohomology of hypercovers.  Comparing \v{C}ech and derived functor
cohomology in this situation requires finite fiber products which
don't exist in $\Sm$.  However the standard comparison would show
that \[ \underline{\Omega^\dt_{X/\C}} \simeq R\gamma_*\Omega^\dt \]
and thus \[ H_{dR}^i(X) \simeq \HH_h^i(X, \Omega^\dt) =
\HH_{\Zar}^i(X, R\gamma_*\Omega^\dt) \] giving our main result.
(By GAGA \cite{MR2017446} and results of \cite{MR0199194}, this would
be isomorphic to its analytic counterpart.)

According to Jardine (\cite{MR906403}), hypercovers are just
(semi-)representable local acyclic fibrations.  Keeping this in mind,
we generalize Verdier's work on \v{C}ech cohomology to arbitrary local
acyclic fibrations of simplicial presheaves.  The precise statement
proved in the first section is
\begin{mainthm}
  Let $X_\dt$ be a simplicial presheaf, and $hD(X_\dt)$ the homotopy
  category of local acyclic fibrations $K_\dt \to X_\dt$.  Then for a
  bounded below complex of sheaves of abelian groups $\cF^\dt$ with
  the filtration b\^{e}te there is an isomorphism
  \[ \varinjlim_{K_\dt \in hD(X_\dt)} H^p(\Tot\Hom(\Z{K^\dt}, \cF^\dt))
  \simeq \Ext^p(\Z{X^\dt}, \cF^\dt) \] and there is a
  filtered quasi-isomorphism of ind-objects in the derived category
  \[ \underset{K_\dt \in hD(X_\dt)}{"\varinjlim"} \Tot\Hom(\Z{K^\dt},
  \cF^\dt) \simeq R\Hom(\Z{X^\dt}, \cF^\dt). \]
\end{mainthm}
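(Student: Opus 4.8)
The plan is to reproduce Verdier's comparison of \v{C}ech and derived-functor cohomology, but carried out inside simplicial presheaves, where the fibre products that Verdier's argument uses — and which are unavailable in $\Sm$ — do exist. Writing $F(-) := \varinjlim_{K_\dt \in hD(X_\dt)} \Tot\Hom(\Z{K^\dt}, -)$ for the functor on bounded below complexes of abelian sheaves appearing on the left, I would first reduce everything to producing a natural comparison map $\eta_{\cF^\dt}\colon F(\cF^\dt) \ra R\Hom(\Z{X^\dt},\cF^\dt)$, compatible with b\^{e}te filtrations, and showing it is a quasi-isomorphism when $\cF^\dt$ is a single sheaf $\cF$ in degree $0$: then $H^p$ of $\eta$ gives the first displayed isomorphism, while reading the single-sheaf statement on each graded piece $\cF^n$ yields the filtered quasi-isomorphism of ind-objects, a filtered quasi-isomorphism being by definition one that is a quasi-isomorphism on every $\mathrm{gr}$. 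The map $\eta$ is the obvious one: the canonical map $\Tot\Hom(\Z{K^\dt},\cF) \ra R\Hom(\Z{K^\dt},\cF)$, followed by the isomorphism $R\Hom(\Z{K^\dt},\cF) \simeq R\Hom(\Z{X^\dt},\cF)$, followed by $\varinjlim_{K_\dt}$.

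Everything then rests on two facts, and the first is the main obstacle: that $F$ is exact, i.e.\ carries short exact sequences of sheaves to short exact sequences of complexes, equivalently that each $\cF \mapsto \varinjlim_{K_\dt}\Hom(\Z{K^p},\cF)$ is exact. Left-exactness is automatic, so the content is that a surjection $\cF \twoheadrightarrow \cF''$ stays surjective after $\varinjlim_{K_\dt}\Hom(\Z{K^p},-)$; a class on the right is a morphism of sheaves $s\colon K^p \ra \cF''$, and forming the local epimorphism $P := K^p \times_{\cF''}\cF \ra K^p$ one wants a morphism $K'_\dt \ra K_\dt$ in $hD(X_\dt)$ whose $p$-th level $K'^p \ra K^p$ factors through $P$ — for then $s$ lifts to $\cF$ after this refinement. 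Such a $K'_\dt$ should be built from $K_\dt$ by a relative coskeletal construction refining the $p$-th level along the cover $P$; the delicate point is carrying along the degenerate simplices in level $p$ (the latching data), and this is precisely where the passage from semi-representable hypercovers to arbitrary local acyclic fibrations of simplicial presheaves buys the room to maneuver, since arbitrary limits and colimits of presheaves are now available. Stability of local acyclic fibrations under base change and composition then makes $K'_\dt \ra X_\dt$ again one. I expect this refinement lemma, together with the verification that $hD(X_\dt)$ is filtered — binary amalgamations being fibre products $K_\dt \times_{X_\dt} L_\dt$, and coincidence of parallel maps up to homotopy coming from a path object, essentially Jardine's analysis \cite{MR906403} — to be where almost all the work lies.

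Granting that, the second fact is easy: $\eta_\cG$ is a quasi-isomorphism for $\cG$ an injective sheaf. Indeed $K_\dt \ra X_\dt$ is a local weak equivalence, so (a local weak equivalence inducing isomorphisms on the sheaves of integral homology) $\Z{K^\dt} \ra \Z{X^\dt}$ is a quasi-isomorphism of complexes of sheaves; and since $\Hom(-,\cG)$ is exact, $\Tot\Hom(\Z{K^\dt},\cG)$ already computes $R\Hom(\Z{K^\dt},\cG) \simeq R\Hom(\Z{X^\dt},\cG)$. So each map $\Tot\Hom(\Z{K^\dt},\cG) \ra R\Hom(\Z{X^\dt},\cG)$ is a quasi-isomorphism, and their (exact, filtered) colimit is $\eta_\cG$; the same applies verbatim with a bounded below complex of injectives $\cG^\dt$ in place of $\cG$.

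The conclusion is then formal. Exactness of $F$ implies $F$ preserves quasi-isomorphisms between bounded below complexes — one checks directly, using the short exact sequences $0 \ra Z^n \ra E^n \ra Z^{n+1} \ra 0$ of cocycles of an acyclic complex $E^\dt$, that $F$ sends bounded below acyclic complexes to acyclic complexes, and then applies this to mapping cones. So for a sheaf $\cF$ with injective resolution $\cF \ra \cG^\dt$, in the naturality square for $\eta$ relating $\cF$ to $\cG^\dt$ the left edge (apply $F$), the right edge (tautology for $R\Hom(\Z{X^\dt},-)$), and the bottom edge ($\eta_{\cG^\dt}$, by the previous paragraph) are all quasi-isomorphisms, forcing $\eta_\cF$ to be one. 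This finishes the single-sheaf case, and hence, as explained at the outset, the theorem.
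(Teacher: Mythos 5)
Your proof is correct and converges on the same three facts about $hD(X_\dt)$ that the paper establishes in Proposition~\ref{p:verd}: cofilteredness (via $K_\dt \times_{X_\dt} L_\dt$ and the path object $\scHom(\uDelta^1,K_\dt)$), level-wise refinement of local acyclic fibrations along covering morphisms (via $j_{n*}$ and base-change stability from Lemma~\ref{l:lac}), and Jardine's quasi-isomorphism $\Z{K^\dt} \to \Z{X^\dt}$ (Proposition~\ref{p:jardine}) --- and this is indeed where all the simplicial-presheaf work lives. What differs is the assembly. The paper isolates an abstract ``lemma on computing $\Ext$'' (Lemma~\ref{l:ext}) and proves it by the column-filtration spectral sequence of $\Tot\Hom(K^\dt,I^\dt)$, passing to the filtered colimit only at $E_2$ (since homotopic maps need not agree at $E_1$) and collapsing it via a sub-lemma that any class in $\Ext^q(K,A)$ with $q>0$ dies after pullback along some epimorphism $M\to K$, built as $I^{q-1}\times_J K$. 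You instead prove $F=\varinjlim_K\Tot\Hom(\Z{K^\dt},-)$ is exact, check agreement with $R\Hom(\Z{X^\dt},-)$ on injectives, and resolve. The content is the same in different clothing: your surjectivity step is precisely the $q=1$ instance of the paper's kill-$\Ext$ sub-lemma (the higher $q$ would follow by dimension shifting), and the spectral sequence is a packaged form of your resolution-and-cone argument. Your version is more elementary and sidesteps the $E_1$-vs-$E_2$ subtlety; the paper's buys a reusable abstract statement valid in any abelian category with enough injectives. The one real elision on your side is the passage from a single sheaf to a general bounded-below complex: ``quasi-isomorphism on each $\mathrm{gr}$'' gives the filtered display, but the unfiltered first display needs the explicit truncation-and-colimit argument (steps 6--7 in the paper's proof of Lemma~\ref{l:ext}), since the filtration b\^{e}te on an unbounded-above complex is not finite and convergence must be argued.
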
 
Note the lack of hypotheses on fiber products in the underlying topos.

In practice one usually wants to restrict to local acyclic fibrations
which satisfy some representability hypothesis.  Define a
semi-representable presheaf to be a presheaf that is isomorphic to a
coproduct of representable presheaves; that one can restrict to
semi-representable presheaves is an easy corollary of the above
theorem. To satisfy stronger hypotheses than semi-representability
seems to require something from the underlying topos -- in our case we
use the inclusion $\Sm_h \subset \Sch_h$.

Section three is occupied with ``topological'' matters.  Using Du
Bois' results requires a comparison of the $ucd$- and $h$- topologies:
after some preliminaries, we show every $h$-covering is a
$ucd$-covering.  We do not know of an example of a $ucd$-covering tha
is not an $h$-covering.  Finally we show one can actually compute
using representable presheaves in $\Sm_h$.

In section four we apply our work to the algebraic de Rham complex.
Key in applying Du Bois' results is Theorem \ref{t:beilinson}, which
compares $h$-hypercovers to Zariski hypercovers.  This result comes
from a generous suggestion of Alexander Beilinson.  Also in this
section is the proof that $\Omega^q$ is a sheaf in the $h$-topology.
These results with the \v{C}ech theory yield the main theorem.

This paper is based on my dissertation, and I owe thanks to the many
people who helped me.  Everything here has benefited from the guiding
hand of my advisor, Madhav Nori, to whom I give my sincerest thanks.
Alexander Beilinson has also provided invaluable help and advice.  I
would also like to thank Andrew Blumberg and Minhea Popa for
stimulating mathematical discussions.

\section{A Generalized Verdier Theorem}

\subsection{Local acyclic fibrations}

Let $C$ be a site, $\Pre C$ the category of presheaves of sets on $C$,
$\Sh C$ the category of sheaves of sets on $C$, and $s\Pre C, s\Sh C$
the categories of simplicial presheaves and sheaves.  Note that,
unlike \cite[ex V 7.3.0]{MR0354653}, we do not assume the existence of
products and finite fiber products in our site $C$.  Let $e$ be the
terminal object of $\Sh C$.  For a presheaf $K$, let $\Z{K}$ denote
the associated sheaf of free abelian groups; for a simplicial presheaf
$K_\dt$, let $\Z{K^\dt}$ denote the associate negative cochain complex
of sheaves of free abelian groups; define $\Z := \Z{e}$ the sheaf of
free abelian groups associated to the terminal object $e$.

\begin{defn}[cf. \cite{MR906403,MR2066498,MR2034012}] \quad

  \begin{enumerate}
  \item Let $f:L_\dt \to K_\dt$ be a morphism of simplicial
    presheaves.  $f$ is called a \emph{local acyclic fibration} if,
    for every $U \in C$, integer $k \ge 0$ and diagram
    \[ \xymatrix{ \dDelta^k \ar@{^(->}[d] \ar[r] & L_\dt(U) \ar[d]_{f(U)} \\
      \Delta^k \ar[r] & K_\dt(U) } \] there is a refinement (a
    covering sieve) $R$ of $U$ so that for every $V \to U \in R$ there
    is a lift
    \[ \xymatrix{ \dDelta^k \ar@{^(->}[d] \ar[r] & L_\dt(U)
      \ar[d]_{f(U)} \ar[r] & L_\dt(V) \ar[d]_{f(V)} \\
      \Delta^k \ar[r] \ar@{-->}[rru] & K_\dt(U) \ar[r] & K_\dt(V) }
    \] indicated by the dashed arrow.  We say $f$ satisfies the
    \emph{local right lifting property} for the inclusion $\dDelta^k
    \to \Delta^k$.
  \item For a presheaf $M \in \Pre C$, let $M_\dt$ be the constant
    simplicial presheaf associated to $M$.  We abuse notation and call
    the augmented simplicial presheaf $K_\dt \to M$ a local acyclic
    fibration if the morphism of simplicial presheaves $K_\dt \to
    M_\dt$ is a local acyclic fibration.
  \item Recall a simplicial presheaf is \emph{semi-representable} if
    its components are isomorphic to coproducts of representable
    presheaves.  A local acyclic fibration $L_\dt \to K_\dt$ is a
    \emph{hypercover} if both $L_\dt$ and $K_\dt$ are
    semi-representable.
  \end{enumerate} 
\end{defn}

Compare the following with \cite[ex V Lemma 7.3.6]{MR0354653}:

\begin{lem}\label{l:cofib} A morphism $f:L_\dt \to K_\dt$ is a local
  acyclic fibration if and only if for every $P_\dt \hookrightarrow
  Q_\dt$ an inclusion of constant simplicial sets with only finitely
  many non-degenerate simplices, we can locally lift diagrams
  \[ \xymatrix{ P_\dt \ar@{^(->}[d] \ar[r] & L_\dt(U)
    \ar[d]_{f(U)} \ar[r] & L_\dt(V) \ar[d]_{f(V)} \\
    Q_\dt \ar[r] \ar@{-->}[rru] & K_\dt(U) \ar[r] & K_\dt(V) }
  \] 
\end{lem}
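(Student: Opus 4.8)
The plan is to reduce the statement about arbitrary finite simplicial pairs $P_\dt \hookrightarrow Q_\dt$ to the generating cofibrations $\dDelta^k \hookrightarrow \Delta^k$, which are exactly the lifting problems built into the definition of a local acyclic fibration. The ``only if'' direction is the substantive one; the ``if'' direction is immediate since $\dDelta^k \hookrightarrow \Delta^k$ is itself an inclusion of constant simplicial sets with finitely many non-degenerate simplices.

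For the ``only if'' direction, first I would recall the standard fact that any monomorphism $P_\dt \hookrightarrow Q_\dt$ of simplicial sets with only finitely many non-degenerate simplices in $Q_\dt \setminus P_\dt$ admits a finite filtration
\[ P_\dt = Q^{(0)}_\dt \subseteq Q^{(1)}_\dt \subseteq \cdots \subseteq Q^{(n)}_\dt = Q_\dt \]
in which each $Q^{(i)}_\dt$ is obtained from $Q^{(i-1)}_\dt$ by attaching a single non-degenerate simplex, i.e.\ there is a pushout square with $\dDelta^{k_i} \hookrightarrow \Delta^{k_i}$ on the left and $Q^{(i-1)}_\dt \hookrightarrow Q^{(i)}_\dt$ on the right (attach the non-degenerate simplices in order of non-decreasing dimension). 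Since everything here is constant, this pushout of simplicial sets is preserved by $\Hom(\blank, L_\dt(U))$ turning it into a pullback of sets, and likewise for $K_\dt(U)$; so a lifting problem for $Q^{(i-1)}_\dt \hookrightarrow Q^{(i)}_\dt$ against $f(U)$ is the \emph{same data} as a lifting problem for $\dDelta^{k_i} \hookrightarrow \Delta^{k_i}$ against $f(U)$.

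Next I would run an induction on the length $n$ of the filtration. Given the outer lifting problem $Q_\dt \to K_\dt(U)$, $P_\dt \to L_\dt(U)$, suppose inductively we have produced a covering sieve $R_{i-1}$ of $U$ and, for each $V \to U$ in $R_{i-1}$, a lift $Q^{(i-1)}_\dt \to L_\dt(V)$ compatible with the restriction of the given data. Fix such a $V$; restricting the attaching map of the $i$-th cell along $U \leftarrow V$ and composing with the lift over $V$ gives a $\dDelta^{k_i} \hookrightarrow \Delta^{k_i}$ lifting problem over $V$, and the defining property of a local acyclic fibration yields a covering sieve $R_V$ of $V$ over which it can be solved. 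The composites $W \to V \to U$ for $V \to U \in R_{i-1}$, $W \to V \in R_V$ generate a covering sieve $R_i$ of $U$ (sieves are stable under this ``composition'' operation, and a covering sieve refined cover-by-cover by covering sieves is covering), and the solutions assemble — using the pushout/pullback identification above — into lifts $Q^{(i)}_\dt \to L_\dt(W)$ for all $W \to U \in R_i$. After $n$ steps $R_n$ is the desired refinement.

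The main obstacle is purely bookkeeping: one must check that the cell-attaching solutions obtained over the various $V$ glue to a genuine map out of $Q^{(i)}_\dt$, and that the sieve-composition step really does produce a covering sieve of $U$. The gluing is formal once one observes that, on constant simplicial presheaves, $\Hom(Q^{(i)}_\dt, L_\dt(W))$ is literally the fiber product $\Hom(Q^{(i-1)}_\dt, L_\dt(W)) \times_{\Hom(\dDelta^{k_i}, L_\dt(W))} \Hom(\Delta^{k_i}, L_\dt(W))$; the sieve step is the transitivity/local-character axiom for a Grothendieck topology. I would spell out the $n=1$ base case explicitly (it is essentially the definition after the pushout identification) and then present the inductive step, leaving the simplicial filtration lemma as a citation or a one-line remark.
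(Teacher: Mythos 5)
Your proof is correct and matches the paper's (terse) proof, which reads in full ``Induction on the definition'' --- meaning exactly the cell-attachment induction you carry out, together with the standard facts that $\Hom(\blank, X_\dt(U))$ turns the pushout cell attachment into a pullback and that refining each member of a covering sieve by covering sieves yields a covering sieve. You have simply supplied the details the paper omits.
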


\begin{proof} Induction on the definition.
\end{proof}

Recall a morphism of presheaves $F \to G$ is a \emph{covering
  morphism} if the associated morphism of sheaves is an epimorphism
(see \cite[II.5.2]{MR0354652}.)

\begin{rem}
  For a morphism $f:L_\dt \to K_\dt$, Verdier uses the following
  equivalent definition of local acyclic fibration (which he calls
  ``special''):
  \begin{enumerate}
  \item For each integer $k \ge 0$, the morphism $\phi_k$ in the
    diagram is a covering morphism:
    \[ \xymatrix{ L_k \ar[rr]^{f_k} \ar[dd]
      \ar[dr]^{\phi_k} & & K_k \ar[dd] \\ & P_k \ar[ru]
      \ar[dl] & \\ (\cosk_{k-1} L)_k \ar[rr]_{(\cosk_{k-1} f)_k} & &
      (\cosk_{k-1} K)_k } \] The vertical arrows are the coskeleton
    adjunction maps and $P_k$ is the fiber product of $K_k$
    and $(\cosk_{k-1} L)_k$ by the arrows in the diagram.
  \item The morphism $f_0:L_0 \to K_0$ is a covering morphism.
  \end{enumerate}
\end{rem}

For a simplicial set $S_\dt$, let $\underline{S}_\dt$ denote the
associated constant simplicial presheaf.

\begin{prop}[{\cite[Proposition 7.2]{MR2066498}}]\label{p:lac} 
  Let $f:L_\dt \to K_\dt$ be a morphism of simplicial presheaves.
  Then the following are equivalent: 
  \begin{enumerate}
  \item $f$ is a local acyclic fibration.
  \item For every integer $k \ge 0$, the morphism 
    \[ \cHom(\uDelta^k, L_\dt) \to \cHom(\udDelta^k, L_\dt)
    \times_{\cHom(\udDelta^k, K_\dt)} \cHom(\uDelta^k, K_\dt)
    \] induced by the inclusion $\del \Delta^k \to \Delta^k$
    and $f$ is a covering morphism.
  \item $f$ is special in the sense of Verdier.
  \end{enumerate}
\end{prop}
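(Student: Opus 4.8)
The plan is to prove the cycle of implications $(1) \Leftrightarrow (2)$ and $(1) \Leftrightarrow (3)$, with most of the work concentrated in the first equivalence; the equivalence with Verdier's "special" condition is then essentially a reindexing. First I would unwind what condition (2) says pointwise. For a fixed $U \in C$, the presheaf $\cHom(\uDelta^k, L_\dt)$ evaluated at $U$ is the simplicial mapping set $\Hom(\Delta^k, L_\dt(U))$, which is just $L_k(U)$ by the Yoneda lemma, and similarly $\cHom(\udDelta^k, L_\dt)(U) = \Hom(\partial\Delta^k, L_\dt(U))$ is the set of "$k$-boundaries" in $L_\dt(U)$. So the map in (2), evaluated at $U$, is precisely the map sending a $k$-simplex of $L_\dt(U)$ to its boundary together with the $k$-simplex of $K_\dt(U)$ it maps to — i.e. it is the canonical map
\[ L_k(U) \to \bigl(\text{boundaries in } L_\dt(U)\bigr) \times_{(\text{boundaries in } K_\dt(U))} K_k(U). \]
Saying this is a covering morphism of presheaves means: for every $U$ and every element of the target over $U$, there is a covering sieve $R$ of $U$ such that for each $V \to U$ in $R$ the restricted element lifts to $L_k(V)$. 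But an element of the target over $U$ is exactly a commutative square $\partial\Delta^k \to L_\dt(U)$, $\Delta^k \to K_\dt(U)$ as in the definition of local acyclic fibration, and a lift to $L_k(V)$ is exactly the dashed diagonal arrow. So (1) $\Leftrightarrow$ (2) is a direct translation once the Yoneda identifications are in place; the only mild care needed is that "covering morphism of presheaves" is tested sieve-locally on each $U$, which matches the "refinement $R$ of $U$" in the definition, and that we are allowed to pass from $U$ to $V$ in the target as well (harmless, since restriction of a square is a square).

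Next I would treat (1) $\Leftrightarrow$ (3). Here the two remarks preceding the proposition do the bookkeeping: Verdier's "special" condition asks that $f_0 \colon L_0 \to K_0$ be a covering morphism and that, for each $k \ge 1$, the map $\phi_k \colon L_k \to P_k := K_k \times_{(\cosk_{k-1}L)_k} (\cosk_{k-1}L)_k$ be a covering morphism, where the fiber products are of presheaves (and always exist, being computed sectionwise). The point is the standard identification $(\cosk_{k-1}L)_k \cong \cHom(\udDelta^k, L_\dt)$ — the matching object of $L_\dt$ in degree $k$ is the same thing as the presheaf of maps out of $\partial\Delta^k$ — together with $L_k \cong \cHom(\uDelta^k,L_\dt)$ and $K_k \cong \cHom(\uDelta^k, K_\dt)$. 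Under these, Verdier's $P_k$ is literally the fiber product appearing in (2), and $\phi_k$ is the map in (2); the degree-$0$ clause of Verdier's definition matches the $k=0$ case of (2) (where $\partial\Delta^0 = \varnothing$, so the fiber product degenerates to $K_0$). Thus (2) and (3) are the same family of conditions written in two notations, and (1) $\Leftrightarrow$ (3) follows from (1) $\Leftrightarrow$ (2). I would cite \cite{MR2066498} for the equivalence of (1) and (2) and only sketch these identifications, since they are formal.

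The main obstacle — really the only place to be careful — is the identification $(\cosk_{k-1}L)_k \cong \cHom(\udDelta^k, L_\dt)$ and the compatibility of all these presheaf-level fiber products with one another, all carried out \emph{without} assuming fiber products exist in $C$ itself. The resolution is that everything happens in $\Pre C$ (and $s\Pre C$), which is a genuine topos with all limits computed objectwise in $\Sets$, so $\cosk_{k-1}$, the matching objects, and the fiber product $P_k$ are all unproblematic; no finiteness or representability of $C$ enters. Once that is said explicitly, the rest is diagram-chasing and the Yoneda lemma. I would also remark that "covering morphism" commutes with the relevant formation because it is a sheaf-epimorphism condition, checked after sheafification, and sheafification is exact. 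With these points pinned down the proposition follows; I would keep the written proof short, referring to \cite{MR906403} and \cite{MR2066498} for the pieces that are already in the literature and spelling out only the matching-object identification in the non-representable setting.
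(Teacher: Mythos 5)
Your proof is correct and follows the same route as the paper: the equivalence $(1)\Leftrightarrow(2)$ is the Yoneda unwinding of the lifting definition, and $(2)\Leftrightarrow(3)$ comes from the coskeleton--skeleton adjunction together with $\sk_{k-1}\uDelta^k = \udDelta^k$, all carried out in $\Pre C$ so that no limits in $C$ itself are needed. (One small typo: in your description of Verdier's $P_k$ the base of the fiber product should be $(\cosk_{k-1}K)_k$, not $(\cosk_{k-1}L)_k$, though the surrounding discussion makes clear you have the right object in mind.)
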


\begin{proof}
  $1 \Leftrightarrow 2$ is by definition.  To show $2 \Leftrightarrow
  3$, apply the isomorphisms $X_k = \Hom(\Delta^k, X_\dt)$ and the
  coskeleton-skelton adjunction to the covering condition
  \[ L_k \to (\cosk_{k-1}L)_k \times_{(\cosk_{k-1}K)_k} K_k \] noting
  that $\sk_{k-1}\uDelta^k = \udDelta^k$.
\end{proof}

We recall the following basic results.

\begin{prop}[{\cite[Proposition 2.9]{MR906403}}]\label{p:jardine}
  If $f:L_\dt \to K_\dt$ is a local acyclic fibration of simplicial
  presheaves, then the induced map $\Z{L^\dt} \to \Z{K^\dt}$ is a
  quasi-isomorphism of complexes of sheaves.
\end{prop}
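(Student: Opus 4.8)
The plan is to verify the stated quasi-isomorphism on homology sheaves. So first I would reformulate the claim as: the mapping cone $\mathrm{Cone}(\Z L^\dt \to \Z K^\dt)$ is acyclic, i.e. for every integer $n$ the presheaf $U \mapsto H_n\bigl(\mathrm{Cone}(\Z L^\dt(U) \to \Z K^\dt(U))\bigr)$ has zero associated sheaf. Unwinding, I must show: for each $U \in C$, each integer $n$, and each homology class $\alpha$ of that cone over $U$, there is a covering sieve $R$ of $U$ over which $\alpha$ restricts to $0$. Two preliminary remarks. First, a finite intersection of covering sieves of $U$ is again one, so in the argument I may pass to a smaller covering sieve a bounded number of times. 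Second, one cannot first reduce to the case $K_\dt = e$ and quote the classical acyclicity of $\Z L^\dt$ over the terminal sheaf: that reduction needs fiber products in $C$, which we are deliberately not assuming. This is the reason for working relatively, with the cone of $f$ rather than with $L_\dt$ alone.

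Next I would exploit the finiteness of a homology class. A cycle representing $\alpha$ involves only finitely many simplices of $K(U)$ and finitely many simplices of $L(U)$ --- an $n$-chain $\beta$ of $\Z K(U)$ and an $(n-1)$-chain $\lambda$ of $\Z L(U)$ with $\partial\lambda = 0$ and $\partial\beta = \pm f_*\lambda$. To make $\alpha$ vanish over a refinement I want to lift the simplices occurring in this picture through $f$; each such lifting is an instance of the local right lifting property of a local acyclic fibration against an inclusion $\dDelta^k \hookrightarrow \Delta^k$, and, since only finitely many simplices are involved, Lemma \ref{l:cofib} lets me do the whole finite configuration in one stroke: packaging the simplices (and the faces already lifted) as an inclusion $P \hookrightarrow Q$ of finite simplicial sets, it furnishes a single covering sieve $R$ of $U$ together with, for each $V$ in it, a lift over $V$. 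A diagram chase with these lifts then exhibits $\alpha|_V$ as a boundary of $\mathrm{Cone}(\Z L^\dt(V) \to \Z K^\dt(V))$, for each $V \in R$ separately --- which is all that is needed, as I never require the lifts over different $V$ to be compatible.

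The hard part will be making that ``diagram chase with these lifts'' honest. Two subtleties lurk there. A lift of the chain $\beta$ through $f$ need not be a cycle --- only its image under $f_*$ is controlled --- so one cannot naively transport $\alpha$ to a class on $L$; and a homology class carries integer multiplicities, so its cycle representative is a chain, not a map of simplicial sets of the kind Lemma \ref{l:cofib} expects. Overcoming this is the technical core of the proof: one either builds a local chain nullhomotopy one simplicial degree at a time, feeding to the lifting property each new simplex together with its already-lifted boundary, or one first subdivides the finite simplicial sets in play and uses a general-position argument to represent the relative class by an honest map of simplicial pairs into $(K(V), L(V))$ before applying Lemma \ref{l:cofib}. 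This is exactly the issue Verdier confronts for hypercovers, and it is handled in \cite[Proposition 2.9]{MR906403}; the proof I have sketched is in substance Jardine's, which I would follow in the details.
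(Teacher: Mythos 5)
The paper does not prove this proposition; it is recalled verbatim from \cite[Proposition 2.9]{MR906403} with no argument supplied, so there is no in-paper proof to compare against. Your outline --- reformulate as local acyclicity of the cone of $\Z{L^\dt} \to \Z{K^\dt}$, exploit the finiteness of a representing cycle, and invoke Lemma \ref{l:cofib} for the lifting --- has the right shape for a direct, elementary argument in the spirit of Verdier's original treatment of hypercovers, and you are right to flag the real obstruction: a chain carries integer multiplicities and so is not a map of simplicial sets, so Lemma \ref{l:cofib} cannot be applied to it as it stands.

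Of your two proposed repairs, only one is viable at this level of generality. ``Subdivision and general position'' is a geometric maneuver; a simplicial presheaf over an arbitrary site carries no geometry, so that route cannot work here. The degree-by-degree approach is the right one, but to make it honest you need one further idea: rather than lifting the simplices of $\beta$ individually, lift in one stroke the entire finite simplicial subset $T \subset K_\dt(U)$ that they generate (closed under faces and degeneracies), so that boundary relations among the lifts are forced rather than hoped for. Even then, a relative class $[(\lambda,\beta)]$ hands you not an inclusion but a square with $S \subset L_\dt(U)$ and $T \subset K_\dt(U)$ the finite simplicial sets generated by the simplices of $\lambda$ and of $\beta$, and the left-hand map $f|_S : S \to T$ need not be a monomorphism; you must first factor it through a mapping cylinder to obtain a cofibration before Lemma \ref{l:cofib} applies, and then track what that replacement does to the chains. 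None of this is unmanageable, but none of it is yet on the page. Finally, the argument you sketch is not really ``in substance Jardine's'': his proof runs through local weak equivalences and ultimately a reduction to the simplicial-set case, whereas yours is a direct cycle-chase closer to the SGA4 treatment of hypercovers.
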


\begin{lem}[{\cite[ex V Lemma 7.3.4]{MR0354653}}]\label{l:lac} \quad

  \begin{enumerate}
  \item The composition of two local acyclic fibrations is a local
    acyclic fibration.
  \item Local acyclic fibrations are preserved under base change.
  \item Suppose $K_\dt$ is a generalized hypercover, $f:L_\dt \to
    M_\dt$ a local acyclic fibration, and $K_\dt \to M_\dt$ a
    morphism.  Then the fiber product $L_\dt \times_{M_\dt} K_\dt$ is
    a generalized hypercover.
  \end{enumerate}
\end{lem}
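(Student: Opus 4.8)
The plan is to verify each of the three statements using the characterizations of local acyclic fibrations already established, especially Proposition \ref{p:lac} and Lemma \ref{l:cofib}. For part (1), given composable local acyclic fibrations $L_\dt \to M_\dt \to K_\dt$ and a lifting problem for $\dDelta^k \hookrightarrow \Delta^k$ against the composite over some $U \in C$, I would first push the $\Delta^k$-cell down to $K_\dt(U)$, use the local right lifting property of $M_\dt \to K_\dt$ to obtain, after passing to a covering sieve $R$ of $U$, a compatible family of $\Delta^k$-cells in $M_\dt(V)$ for $V \to U$ in $R$; then for each such $V$ apply the local right lifting property of $L_\dt \to M_\dt$ to find a further refining sieve $R_V$ of $V$ over which the desired lift into $L_\dt$ exists. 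Composing sieves (the ``local'' nature of the definition makes this legitimate — a refinement of a refinement is a refinement) produces a single covering sieve of $U$ over which the composite lifts, which is exactly what part (1) asks for.

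For part (2), suppose $f : L_\dt \to M_\dt$ is a local acyclic fibration and $g : N_\dt \to M_\dt$ is any morphism, with base change $f' : L_\dt \times_{M_\dt} N_\dt \to N_\dt$. Here I would prefer the sheaf-theoretic criterion of Proposition \ref{p:lac}(2): the relevant map for $f'$ is
\[
\cHom(\uDelta^k, L_\dt \times_{M_\dt} N_\dt) \to \cHom(\udDelta^k, L_\dt \times_{M_\dt} N_\dt) \times_{\cHom(\udDelta^k, N_\dt)} \cHom(\uDelta^k, N_\dt).
\]
Since $\cHom(\uDelta^k, -)$ commutes with fiber products (it is a right adjoint, being $(-)_k$ after the skeleton adjunction, and in any case limits commute with limits), one identifies this map, after rearranging the fiber products, with the base change along $\cHom(\uDelta^k, N_\dt) \to \cHom(\uDelta^k, M_\dt)$ of the covering morphism attached to $f$. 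Covering morphisms (epimorphisms of sheaves) are stable under base change in any topos, so $f'$ is again a local acyclic fibration. The only care needed is the bookkeeping of which fiber product sits over which, a routine diagram chase.

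For part (3), with $K_\dt \to M_\dt$ a generalized hypercover, $f : L_\dt \to M_\dt$ a local acyclic fibration, the fiber product $L_\dt \times_{M_\dt} K_\dt$ maps to $K_\dt$ by base change of $f$, hence is a local acyclic fibration over $K_\dt$ by part (2); composing with $K_\dt \to M_\dt$ and using part (1) shows $L_\dt \times_{M_\dt} K_\dt \to M_\dt$ is a local acyclic fibration, which (granting the ambient convention that a ``generalized hypercover'' of $M_\dt$ is just a local acyclic fibration onto it, or onto a presheaf — cf. the definition's clause (2)) is the claim. The main obstacle is purely one of patience rather than mathematics: keeping the nested covering sieves in part (1) and the iterated fiber products in parts (2)–(3) straight without the crutch of finite limits in $C$ — but since all the fiber products here are formed in $s\Pre C$ (where they exist) and ``covering morphism'' is a condition on associated sheaves, nothing genuinely new is needed beyond Proposition \ref{p:lac} and the stability of epimorphisms under base change. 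I would remark that this is why the statement, following \cite[ex V Lemma 7.3.4]{MR0354653}, goes through verbatim despite our weaker hypotheses on $C$.
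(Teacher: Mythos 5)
Your arguments for parts (1) and (2) are sound. The paper itself offers no proof of Lemma \ref{l:lac} --- it simply cites SGA~4, exp.~V, 7.3.4 --- so there is no internal argument to compare against, but both the sieve-composition argument for (1) and the reduction of (2) to stability of covering morphisms under base change via Proposition \ref{p:lac}(2) are correct; the fiber-product rearrangement you gesture at in (2) does go through because $\cHom(S,\blank)$ preserves limits and sheafification is exact, so it suffices that epimorphisms of sheaves are pullback-stable. (A marginally shorter route for (2) is to lift directly against $f$ and then invoke the universal property of $L_\dt \times_{M_\dt} N_\dt$ to manufacture the lift against $f'$; either route works.)

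Part (3) contains a genuine slip, hidden in your parenthetical hedge. In the SGA~4 sense, ``$K_\dt$ is a generalized hypercover'' means the augmentation $K_\dt \to e$ to the terminal object is a local acyclic fibration; it does \emph{not} make $K_\dt \to M_\dt$ a local acyclic fibration --- the lemma's wording deliberately lists ``$K_\dt \to M_\dt$ a morphism'' as a separate, weaker hypothesis. Your chain $L_\dt \times_{M_\dt} K_\dt \to K_\dt \to M_\dt$ therefore cannot invoke part (1) at the second arrow, and in any case landing a local acyclic fibration onto $M_\dt$ is not the target notion. The correct assembly is: $L_\dt \times_{M_\dt} K_\dt \to K_\dt$ is a local acyclic fibration by part (2), $K_\dt \to e$ is one by hypothesis, and the composite $L_\dt \times_{M_\dt} K_\dt \to e$ is one by part (1), which is precisely the assertion that the fiber product is a generalized hypercover. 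The fix is one line --- replace $K_\dt \to M_\dt$ by $K_\dt \to e$ in the final composition --- but as written your argument both assumes something not given and proves something other than the claim.
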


\subsection{Computing $\Ext$}

Before proving our main theorem we require the following technical lemma.

\begin{lem}[Lemma on computing $\Ext$]\label{l:ext} Let $C$ be an
  abelian category with enough injectives, $X^\dt \in \Ch^-(C)$ a
  fixed negative cochain complex, and $G^\dt \in \Ch^+(C)$ a fixed
  positive cochain complex.  Suppose $D \subset \Ch^-(C) / X^\dt$ is a
  subcategory of the category of negative cochain complexes of $C$
  over $X^\dt$ with the following properties:
  \begin{enumerate}
  \item The homotopy category $hD$ (morphisms up to chain
    homotopy) is cofiltered.
  \item For every complex $K^\dt \in D$, object $M \in C$, and
    epimorphism $u: M \to K^n$, there is a complex $L^\dt \in D$ and a
    morphism $f:L^\dt \to K^\dt$ whose degree $n$ part factors as \[
    f^n:L^n \to M \overset{u}{\to} K^n. \]
  \item Every $K^\dt \in D$ has structure morphism $K^\dt \to X^\dt$ a
    quasi-isomorphism.
  \end{enumerate} Then there is an isomorphism of functors
  \[ \varinjlim_{K^\dt \in hD} H^p(\Tot\Hom_C(K^\dt, G^\dt)) \simeq
  \Ext^p_C(X^\dt, G^\dt). \] Here $\Ext$ is hyper-$\Ext$.
\end{lem}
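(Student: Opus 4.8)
The plan is to compare both sides to $H^p(\Tot\Hom_C(-,I^\dt))$, where $G^\dt \to I^\dt$ is a quasi-isomorphism into a bounded-below complex of injectives; this exists because $C$ has enough injectives and $G^\dt$ is bounded below. Being a bounded-below complex of injectives, $I^\dt$ is $K$-injective, so $\Tot\Hom_C(-,I^\dt)$ computes $\RHom_C(-,G^\dt)$ and carries quasi-isomorphisms to quasi-isomorphisms; in particular $\Ext^p_C(X^\dt,G^\dt) = H^p(\Tot\Hom_C(X^\dt,I^\dt))$. By property (3) the structure morphism of any $K^\dt \in D$ is a quasi-isomorphism, so it induces an isomorphism $H^p(\Tot\Hom_C(X^\dt,I^\dt)) \simeq H^p(\Tot\Hom_C(K^\dt,I^\dt))$, compatibly with morphisms in $D$. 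The colimits $\varinjlim_{K^\dt \in hD}$ are filtered, since the functors in question are contravariant in $K^\dt$ and $hD$ is cofiltered (hence $hD^{\op}$ is filtered, in particular connected and nonempty); a filtered system all of whose transition maps are isomorphisms has colimit the common value, so $\varinjlim_{K^\dt \in hD} H^p(\Tot\Hom_C(K^\dt,I^\dt)) \simeq \Ext^p_C(X^\dt,G^\dt)$.

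It remains to show the inclusion $G^\dt \to I^\dt$ induces an isomorphism $\varinjlim_{hD} H^p(\Tot\Hom_C(K^\dt,G^\dt)) \to \varinjlim_{hD} H^p(\Tot\Hom_C(K^\dt,I^\dt))$. Since $\Hom_C(K^\dt,-)$ commutes with mapping cones, these two colimits sit in a long exact sequence together with $\varinjlim_{K^\dt \in hD} H^\dt(\Tot\Hom_C(K^\dt,Z^\dt))$, where $Z^\dt := \mathrm{cone}(G^\dt \to I^\dt)$ is acyclic (the map is a quasi-isomorphism) and bounded below. Filtered colimits being exact, it is enough to prove
\[ \varinjlim_{K^\dt \in hD} H^p(\Tot\Hom_C(K^\dt,Z^\dt)) = 0 \]
for all $p$ and every acyclic bounded-below complex $Z^\dt$.

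This vanishing is the heart of the matter — the abstract form of Verdier's cocycle argument — and the step I expect to be the real obstacle; here properties (1) and (2) are used. As the colimit is filtered, it suffices to show that for each $K^\dt \in D$ and each degree-$p$ cocycle $\phi$ of $\Tot\Hom_C(K^\dt,Z^\dt)$ — equivalently, a chain map $K^\dt \to Z^\dt[p]$ — there is a morphism $f : L^\dt \to K^\dt$ in $D$ with $f^\ast\phi = d(h)$ for some $h$ of total degree $p-1$ in $\Tot\Hom_C(L^\dt,Z^\dt)$. I would build $h$ and $L^\dt$ by descending induction on degree: if $Z^\dt$ lies in degrees $\ge b$ and $K^\dt$ in degrees $\le 0$, then $\phi$ and $h$ have nonzero components only for $b-p \le j \le 0$, so the induction is finite. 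Assume the components of $h$ in degrees $> j$ have been constructed, after finitely many pullbacks inside $D$, so that the homotopy identity holds there. A short computation using that $\phi$ is a chain map and the homotopy identity in degree $j+1$ shows $\psi^j := (f^\ast\phi)^j - h^{j+1}d_K^j : K^j \to Z^{j+p}$ (with the usual sign) satisfies $d_Z\psi^j = 0$, hence factors through $\ker d_Z^{j+p} = \im d_Z^{j+p-1}$ by acyclicity of $Z^\dt$; thus the projection $K^j \times_{Z^{j+p}} Z^{j+p-1} \to K^j$ is an epimorphism. Property (2) supplies $L^\dt \in D$ and a morphism $f : L^\dt \to K^\dt$ in $D$ whose degree-$j$ component factors through this epimorphism, and pulling the accumulated data back along $f$ and composing with the projection to $Z^{j+p-1}$ defines $h^j$, extending the homotopy identity to degree $j$. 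At the bottom $j = b-p$ the target $Z^{b-1}$ vanishes, so $\psi^{b-p}=0$, $h^{b-p}=0$, and the induction closes with no further pullback. Composing the finitely many morphisms used (legitimate since $D$ is a subcategory closed under composition) gives a single $f : L^\dt \to K^\dt$ in $D$ with $f^\ast\phi = d(h)$, so $f^\ast[\phi] = 0$ in $H^p(\Tot\Hom_C(L^\dt,Z^\dt))$. This proves the vanishing and, with the earlier reductions, the lemma; naturality in $G^\dt$ is automatic, as $\Tot\Hom_C(K^\dt,-)$ and all the comparison maps above are natural.
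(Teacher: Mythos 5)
Your proposal is correct, and it takes a genuinely different route from the paper's. The paper works first with $G^\dt = G$ concentrated in a single degree, forms the double complex $\Hom_C(K^\dt, I^\dt)$ for an injective resolution $I^\dt$ of $G$, passes to the filtered colimit at the $E_2$ page of the column spectral sequence, and then applies an ``Ext-killing'' lemma (any class in $\Ext^q_C(K,A)$ with $q>0$ dies after pullback along some epimorphism $M\to K$) together with property (2) to collapse the spectral sequence; it then bootstraps from a single object to shifted objects, to bounded complexes via the truncation filtration, and finally to bounded-below complexes by a further colimit. You instead replace $G^\dt$ once and for all by a bounded-below complex of injectives $I^\dt$, use property (3) plus $K$-injectivity to identify $\varinjlim_{hD} H^p(\Tot\Hom(K^\dt,I^\dt))$ with $\Ext^p(X^\dt,G^\dt)$, and reduce the whole statement to the vanishing of $\varinjlim_{hD} H^p(\Tot\Hom(K^\dt,Z^\dt))$ for $Z^\dt$ acyclic and bounded below; you then prove that vanishing directly by building a nullhomotopy degree by degree, invoking property (2) at each stage to lift across the epimorphism $Z^{j+p-1}\twoheadrightarrow \im d_Z^{j+p-1}$. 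Your degree-by-degree construction is essentially the paper's ``well-known lemma'' inlined across the whole total complex at once, so the same mechanism is doing the work; but your organization avoids both the spectral-sequence formalism and the three-step bootstrap, handling a general bounded-below $G^\dt$ in a single pass. The trade-off is that the paper's route makes the spectral-sequence structure (and hence the filtered statement of Corollary \ref{c:rhom}) visibly built in, whereas in your approach the compatibility with the filtration b\^{e}te would need a short additional remark.
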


Some explanation:
\begin{enumerate}
\item We first work with the case when $G^\dt = G$ a single object
  concentrated in degree zero.  We compute $\RHom(K^\dt, G)$ by taking
  an injective resolution $I^\dt$ of $G$, yielding a first quadrant
  double complex $\Hom_C(K^\dt, I^\dt)$ (giving $\Hom_C(K^a, I^b)$
  bidegree $(-a, b)$) which has total complex $\Tot\Hom_C(K^\dt,
  I^\dt)$.  This complex has a decreasing filtration by columns
  \[ F^l \Tot^m\Hom_C(K^\dt, I^\dt) = \bigoplus_{\substack{-a+b = m \\ a \le l}}
  \Hom_C(K^a, I^b). \] We get a first quadrant convergent spectral
  sequence
  \begin{align*} E_1^{p,q} &= H^q(\Hom_C(K^p, I^\dt)) = \Ext^q_C(K^p,
    G) \\ &\Rightarrow H^{p+q}\RHom(K^\dt, G) =
    \Ext^{p+q}_C(X^\dt, G).
  \end{align*}
\item Since only $hD$ is cofiltered, it does not make sense to take
  the filtered colimit of the $E_1$ terms over $hD$.  However, since
  the $E_2$ terms are the horizontal cohomology of the $E_1$ terms and
  chain homotopic maps induce the same map on cohomology, we can take
  the filtered colimit of the $E_2$ terms over $hD$.  This yields a
  limit spectral sequence
  \[ E_2^{p,q} = \varinjlim_{K \in hD} H^p(\Ext^q_C(K^\dt, G))
  \Rightarrow \Ext^{p+q}_C(X^\dt, G). \] The objects on the left
  hand side are cohomologies of the complexes of $\Ext^q_C$ by varying
  the $K^\dt$.  The contention of the theorem is that the terms with
  $q > 0$ vanish in the limit, collapsing the spectral sequence at the
  $E_2$ page, yielding an isomorphism
  \[ \varinjlim_{K \in hD} H^p(\Hom_C(K^\dt, G)) = \Ext^p_C(X^\dt,
  G). \]
\item Using property 2 of $D$, the remarks show it is enough to prove
  the following well-known lemma.

  \begin{lem} Let $C$ be an abelian category with enough injectives.
    For any $K, A \in C$, any $q > 0$ and any extension class
    \[ \gamma \in \Ext^q_C(K, A) \] there is an epimorphism $f:M \to
    K$ so that $f^*(\gamma) = 0$ in $\Ext^q_C(M, A)$.
  \end{lem}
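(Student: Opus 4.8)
The plan is to prove the lemma by induction on $q$, using dimension shifting to reduce everything to the case $q=1$, which can be handled by an explicit pullback of an extension.

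\emph{Base case $q=1$.} A class $\gamma \in \Ext^1_C(K,A)$ is represented by a short exact sequence $0 \to A \to E \xrightarrow{p} K \to 0$. I will take $M := E$ and $f := p$, which is an epimorphism since it is surjective in this sequence. The pullback of $\gamma$ along $p$ is represented by the sequence $0 \to A \to E\times_K E \to E \to 0$ with the first projection as its second map; the diagonal $E \to E\times_K E$, $e \mapsto (e,e)$, is a section of that projection, so the pulled-back extension is split and $f^*(\gamma)=0$ in $\Ext^1_C(E,A)$.

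\emph{Inductive step $q \ge 2$.} Using that $C$ has enough injectives, choose a monomorphism $A \hookrightarrow I$ with $I$ injective, and set $A' := I/A$, so that $0 \to A \to I \to A' \to 0$ is exact. Because $I$ is injective, $\Ext^i_C(K,I)=0$ for all $i \ge 1$, and hence the connecting homomorphism of the long exact $\Ext_C(K,-)$ sequence gives an isomorphism $\delta_K : \Ext^{q-1}_C(K,A') \xrightarrow{\ \sim\ } \Ext^{q}_C(K,A)$. Put $\gamma' := \delta_K^{-1}(\gamma) \in \Ext^{q-1}_C(K,A')$. Since $q-1 \ge 1$, the inductive hypothesis yields an epimorphism $f:M \to K$ with $f^*(\gamma') = 0$ in $\Ext^{q-1}_C(M,A')$. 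By contravariant functoriality of the long exact sequence in the first argument — i.e.\ naturality of $\delta$ — we get $f^*(\gamma) = f^*(\delta_K(\gamma')) = \delta_M(f^*(\gamma')) = 0$, as desired.

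The only points requiring any care are the naturality of the connecting homomorphism in the first variable and the vanishing $\Ext^{\ge 1}_C(K,I)=0$ for $I$ injective; both are standard, so I do not expect a genuine obstacle here. Once this lemma is in hand, the remarks preceding it apply directly: given $K^\dt \in D$ and a class in $\Ext^q_C(K^p, G)$ with $q>0$, the lemma produces an epimorphism $M \to K^p$ killing it, and property 2 of $D$ realizes a map $L^\dt \to K^\dt$ in $D$ through $M$; this shows the terms with $q>0$ in the limit spectral sequence of the remarks vanish, collapsing it and completing the proof of Lemma \ref{l:ext}.
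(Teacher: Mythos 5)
Your proof is correct, and it is essentially a re-packaging of the same dimension-shifting idea, though the structure differs enough to be worth noting. You reduce inductively to the case $q=1$ via the short exact sequence $0\to A\to I\to A'\to 0$ and naturality of the connecting map, then kill a degree-one class by pulling the representing extension back along its own surjection and observing the diagonal splits it. The paper instead does all $q-1$ shifts in one stroke: it takes a truncated injective resolution $0\to A\to I^0\to\cdots\to I^{q-1}\to J\to 0$, lifts $\gamma$ to $\sigma\in\Hom(K,J)$ using $\Hom(K,I^{q-1})\to\Hom(K,J)\to\Ext^q(K,A)\to 0$, sets $M:=I^{q-1}\times_J K$ with projection $f$, and checks directly that $f^*\sigma$ factors through $I^{q-1}$, hence is a coboundary. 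The fiber product $M$ the paper builds is exactly the middle term of the extension you get after your $q-1$ shifts, so the two constructions produce the same epimorphism. Yours is more modular and leans on standard generalities (naturality of $\delta$, $\Ext^{\ge 1}(K,I)=0$); the paper's is non-inductive and more explicit, at the cost of juggling the truncated resolution and the fiber-product diagram by hand. Either one plugs into the surrounding spectral-sequence argument the same way.
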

  \begin{proof} For a fixed $q > 0$ choose a truncated injective
    resolution
    \[ 0 \to A \to I^0 \to \cdots \to I^{q-1} \to J \to 0 \] where $J$
    is the cokernel of $I^{q-2} \to I^{q-1}$.  Applying the functor
    $\Hom(K, \blank)$ yields the complex
    \[ \Hom(K, I^0) \to \cdots \to \Hom(K, I^{q-1}) \to \Hom(K, J) \to
    \Ext^q(K, A) \to 0. \] Lift $\gamma$ to a homomorphism $\sigma:K
    \to J$ in $\Hom(K, J)$.  Form the fiber product $I^{q-1} \times_J
    K$ using $\sigma$.  The natural projection map $f:M = I^{q-1}
    \times_K K \to K$ yields a map of complexes
    \[ \xymatrix{ \cdots \ar[r] & \Hom(K, I^{q-1}) \ar[r]\ar[d] &
      \Hom(K, J) \ar[r] \ar[d] & \Ext^q(K, A) \ar[r] \ar[d] & 0 \\
      \cdots \ar[r] & \Hom(M, I^{q-1}) \ar[r] & \Hom(M, J) \ar[r] &
      \Ext^q(M, A) \ar[r] & 0. } \] By construction, $f^*(\sigma) \in
    \Hom(M, J)$ is $u \circ p_2$ in the cartesian square
    \[ \xymatrix{ I^{q-1} \times_K M \ar[r]^{p_2} \ar[d]_{p_1} & M
      \ar[d]^u \\ I^{q-1} \ar[r]^c & K }. \] But $u \circ p_2 = c
    \circ p_1$ is the image of $p_1 \in \Hom(M, I^{q-1})$.  Hence
    $f^*(\sigma)$ is a coboundary and so $f^*(\gamma)$ is zero.
  \end{proof}

\item Let $f:I \to D^+(C)$ be a filtered system in the derived
  category of $C$.  The associated ind-object is denoted by
  \[ \underset{M^\dt \in f(I)}{"\varinjlim"} M^\dt. \] We define the
  cohomology of this ind-object by the equation
  \[ H^k\left(\underset{M^\dt \in f(I)}{"\varinjlim"} M^\dt\right) :=
  \varinjlim_{M^\dt \in f(I)} H^k(M^\dt). \] We note that, in the case
  where the ind-object is representable, this agrees with the
  cohomology of the limit object since $H^k(\varinjlim M^\dt) =
  \varinjlim H^k(M^\dt)$, cf.  \cite[1.12.7]{MR1299726}, using the
  model of the derived category via injectives (as in
  \cite[III.5.22]{MR1950475}.)  We say a map of ind-objects is a
  quasi-isomorphism if it induces an isomorphism on cohomology.

  \begin{cor} Suppose $D \subset \Ch^-(C)$ satisfies the hypotheses of
    Lemma \ref{l:ext}.  Then there is a natural quasi-isomorphism
    \[ \underset{K^\dt \in hD}{"\varinjlim"} \Hom_C(K^\dt, G) \simeq
    R\Hom_C(X^\dt, G). \]
  \end{cor}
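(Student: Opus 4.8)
The plan is to upgrade the isomorphism of functors from Lemma \ref{l:ext} to a statement about ind-objects in $D^+(C)$, which is largely a matter of bookkeeping once one observes that the filtered colimit commutes with cohomology. First I would recall that, by definition, the cohomology of the ind-object $\underset{K^\dt \in hD}{"\varinjlim"} \Hom_C(K^\dt, G)$ in degree $p$ is the filtered colimit $\varinjlim_{K^\dt \in hD} H^p(\Hom_C(K^\dt, G))$; this colimit makes sense precisely because $hD$ is cofiltered (hence $hD^\op$ is filtered) and because chain-homotopic maps induce the same map on cohomology, so the $H^p(\Hom_C(K^\dt,G))$ assemble into a genuine functor on $hD^\op$. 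Lemma \ref{l:ext} identifies this colimit with $\Ext^p_C(X^\dt, G) = H^p(R\Hom_C(X^\dt, G))$. So the two ind-objects in the statement have the same cohomology in every degree; it remains to produce a \emph{natural map} between them realizing this isomorphism, since "quasi-isomorphism of ind-objects'' as defined in the excerpt is a map inducing an isomorphism on cohomology.

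The natural map is built from the structure morphisms. For each $K^\dt \in D$ the quasi-isomorphism $K^\dt \to X^\dt$ (property 3) induces, by functoriality of $R\Hom$, a morphism in $D^+(C)$
\[ \Hom_C(K^\dt, G) \la R\Hom_C(K^\dt, G) \overset{\sim}{\la} R\Hom_C(X^\dt, G), \]
where the second arrow is an isomorphism in $D^+(C)$ because $K^\dt \to X^\dt$ is a quasi-isomorphism and $G \in \Ch^+(C)$. Wait — the first arrow goes the wrong way in general, so instead I would use that $\Hom_C(K^\dt, G)$ computes $R\Hom_C(K^\dt, G)$ when $G$ is replaced by an injective resolution $I^\dt$; concretely, fix an injective resolution $G \to I^\dt$ once and for all, and for each $K^\dt$ take the honest complex $\Tot\Hom_C(K^\dt, I^\dt)$, which both represents $R\Hom_C(K^\dt,G)$ and, as $K^\dt$ varies over $hD$, forms a filtered diagram in $\Ch^+(C)$ (chain homotopies on $K^\dt$ induce chain homotopies on $\Tot\Hom_C(K^\dt,I^\dt)$). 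The quasi-isomorphisms $K^\dt \to X^\dt$ then give a compatible family of quasi-isomorphisms $\Tot\Hom_C(X^\dt, I^\dt) \to \Tot\Hom_C(K^\dt, I^\dt)$, i.e. a single morphism of ind-objects from the constant ind-object on $R\Hom_C(X^\dt,G)$ to $\underset{K^\dt \in hD}{"\varinjlim"} \Tot\Hom_C(K^\dt, I^\dt)$. This is the map we want.

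It then suffices to check two things. Firstly, the constant ind-object on $R\Hom_C(X^\dt, G)$ and the ind-object $\underset{K^\dt}{"\varinjlim"} R\Hom_C(X^\dt, G)$ are canonically identified (trivially, as the diagram is constant up to the contractible choices in $hD$), so the left side of the asserted quasi-isomorphism may be taken to be $R\Hom_C(X^\dt, G)$ viewed as a constant ind-object; and secondly, the map constructed above induces an isomorphism on cohomology in each degree — but in degree $p$ it is exactly the map $\Ext^p_C(X^\dt, G) \to \varinjlim_{K^\dt \in hD} H^p(\Tot\Hom_C(K^\dt, I^\dt)) = \varinjlim_{K^\dt \in hD} H^p(\Tot\Hom_C(K^\dt, G))$ inverse to the isomorphism of Lemma \ref{l:ext} (one checks the triangle of maps commutes by naturality of the spectral sequence comparison in that lemma's proof). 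Finally I would remark that $\Hom_C(K^\dt, G)$ in the statement is shorthand for $\Tot\Hom_C(K^\dt, G)$ regarded in $D^+(C)$, which agrees with $\Tot\Hom_C(K^\dt, I^\dt)$ there, so no information is lost in passing to the injective resolution. The only mild subtlety — and the step I would be most careful about — is the commutativity of that last triangle, i.e. checking that the ind-object map we wrote down really is a section (up to the identifications) of the colimit comparison of Lemma \ref{l:ext}, rather than some unrelated map with the same source and target cohomology; this follows from tracing through the column filtration on $\Tot\Hom_C(K^\dt, I^\dt)$ used in the proof of that lemma, but it deserves to be said explicitly.
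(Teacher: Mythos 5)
Your overall strategy is the right one and matches the paper's terse treatment: the paper presents this Corollary as a near-immediate consequence of the definition of quasi-isomorphism of ind-objects (``induces an isomorphism on cohomology'') together with Lemma~\ref{l:ext}, and you correctly identify that what remains is to produce a \emph{natural map} realizing that cohomology isomorphism. The zig-zag through $\Tot\Hom_C(K^\dt, I^\dt)$ via the structure maps $K^\dt \to X^\dt$ is the sensible way to do this.

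However, there is a genuine error in your closing remark. You assert that $\Tot\Hom_C(K^\dt, G)$ ``agrees with $\Tot\Hom_C(K^\dt, I^\dt)$'' in $D^+(C)$, i.e.\ that the map induced by $G \to I^\dt$ is a termwise quasi-isomorphism. This is false: it holds only when the $K^{-n}$ are $\Hom_C(-,G)$-acyclic (e.g.\ projective), and the objects of $D$ need not be. (Concretely, for $C = \Ab$, $K^\dt = \Z/2$ in degree $0$, $G = \Z/2$, the complex $\Hom(K^\dt, G)$ is $\Z/2$ concentrated in degree $0$, while $\Tot\Hom(K^\dt, I^\dt)$ has $H^1 = \Ext^1(\Z/2, \Z/2) = \Z/2 \neq 0$.) If your termwise identification were true, Lemma~\ref{l:ext} would be vacuous. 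What is actually true — and is exactly the content of the Lemma, via the vanishing of the $E_2^{p,q}$ terms with $q > 0$ in the colimit — is that the map of \emph{ind-objects}
\[ \underset{K^\dt \in hD}{"\varinjlim"} \Hom_C(K^\dt, G) \longrightarrow \underset{K^\dt \in hD}{"\varinjlim"} \Tot\Hom_C(K^\dt, I^\dt) \]
induced termwise by $G \to I^\dt$ is a quasi-isomorphism, even though no individual term need be. So the correct argument is the zig-zag: your constructed map $R\Hom_C(X^\dt,G) \to \underset{K^\dt}{"\varinjlim"}\Tot\Hom_C(K^\dt,I^\dt)$ (a termwise quasi-isomorphism) composed with the inverse of the ind-object quasi-isomorphism above, the latter being precisely where Lemma~\ref{l:ext} is used. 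That the edge map of the column spectral sequence is the one induced by $G\to I^\dt$ also answers the compatibility worry you flag at the end — but that compatibility only becomes an isomorphism in the colimit, not termwise, and eliding that distinction is the gap.
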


\item The results extend to complexes concentrated in a single
  non-zero degree, by reindexing.

\item Now let $G^\dt \in \Ch^b(C)$ be a finite complex.  By the
  corollary we see that
  \[ \underset{K^\dt \in hD}{"\varinjlim"} \Hom_C(K^\dt, \blank) \]
  takes short exact sequences to exact triangles.  If $G^\dt$ is a
  bounded complex, it has a finite truncation filtration with
  subquotients complexes concentrated in a single degree.  This gives
  the result for finite complexes.

\item  For a bounded below complex $G^\dt$, we note that
  \[ \varinjlim_n R\Hom(K^\dt, G^{\le n}) = R\Hom(K^\dt, \varinjlim_n
  G^{\le n}) = R\Hom(K^\dt, G^\dt) \] if $K^\dt$ is a bounded above
  complex: $R^i\Hom(K^\dt, G^\dt) = R^i\Hom(K^\dt, G^{\le n})$ for
  some $n$ sufficiently large, since the overlap between $K^\dt$ and
  $G^\dt[i]$ is finite.  Likewise \[ \varinjlim_n \underset{K^\dt \in
    hD}{"\varinjlim"} \Hom_C(K^\dt, G^{\le n}) = \underset{K^\dt \in
    hD}{"\varinjlim"} \Hom_C(K^\dt, \varinjlim_n G^{\le n}). \] This
  gives the result for bounded below complexes, and thus completes the
  proof of Lemma \ref{l:ext}.
\end{enumerate}

\begin{cor}\label{c:rhom} Suppose $D \subset \Ch^-(C)$ satisfies the
  hypotheses of Lemma \ref{l:ext}, and $G^\dt \in \Ch^+(C)$ is a fixed
  bounded below complex.  Then there is a natural filtered
  quasi-isomorphism
  \[ \underset{K^\dt \in hD}{"\varinjlim"} \Tot\Hom_C(K^\dt, G^\dt)
  \simeq R\Hom_C(X^\dt, G^\dt) \] where on each side the the
  filtration arises from the filtration b\^{e}te on $G^\dt$.
\end{cor}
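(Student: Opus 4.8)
The plan is to bootstrap from Lemma~\ref{l:ext} and its Corollary (item 5), which already give the \emph{unfiltered} quasi-isomorphism $\underset{K^\dt \in hD}{"\varinjlim"}\Tot\Hom_C(K^\dt,G^\dt)\simeq R\Hom_C(X^\dt,G^\dt)$ for bounded below $G^\dt$; what remains is to upgrade this to a \emph{filtered} statement for the filtration b\^ete. First I would recall that the b\^ete (stupid) filtration on $G^\dt\in\Ch^+(C)$ is $\sigma^{\ge n}G^\dt$, with graded pieces $\sigma^{\ge n}G^\dt/\sigma^{\ge n+1}G^\dt = G^n[-n]$, a complex concentrated in a single degree. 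Applying $\Hom_C(K^\dt,\blank)$ columnwise, the double complex $\Hom_C(K^\dt,G^\dt)$ inherits a filtration by the subcomplexes $\Hom_C(K^\dt,\sigma^{\ge n}G^\dt)$, and since $\Hom_C(K^\dt,\blank)$ is exact on the (split, degreewise) short exact sequences $0\to\sigma^{\ge n+1}G^\dt\to\sigma^{\ge n}G^\dt\to G^n[-n]\to 0$, the associated graded of this filtration is $\Tot\Hom_C(K^\dt,G^n[-n]) = \Hom_C(K^\dt,G^n)[-n]$.

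Next I would pass to the limit. Taking $\underset{K^\dt\in hD}{"\varinjlim"}$ is exact (filtered colimits are exact in $C$, and by item 7 of the discussion this operation sends short exact sequences of complexes to exact triangles), so it carries the filtration above to a filtration on the ind-object $\underset{K^\dt\in hD}{"\varinjlim"}\Tot\Hom_C(K^\dt,G^\dt)$ with graded pieces $\underset{K^\dt\in hD}{"\varinjlim"}\Hom_C(K^\dt,G^n)[-n]$, which by item 6 (reindexing) is $R\Hom_C(X^\dt,G^n)[-n]$. On the other side, the b\^ete filtration on $G^\dt$ induces the standard filtration on $R\Hom_C(X^\dt,G^\dt)$ whose graded pieces are likewise $R\Hom_C(X^\dt,G^n[-n]) = R\Hom_C(X^\dt,G^n)[-n]$. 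The unfiltered comparison map of Corollary (item 5), being induced by the natural maps $\Tot\Hom_C(K^\dt,\blank)\to R\Hom_C(X^\dt,\blank)$, respects these filtrations by naturality, and on graded pieces it is exactly the (reindexed) isomorphism of item 6. Hence it is a filtered quasi-isomorphism.

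The main technical point to be careful about is what ``filtered quasi-isomorphism of ind-objects in the derived category'' should mean and that the argument actually lives at that level of rigor: a priori the b\^ete filtration only makes sense on honest complexes, not on objects of $D^+(C)$, so one should either work in the filtered derived category $DF^+(C)$ (so that ``$R\Hom$ with its b\^ete filtration'' is the image of the filtered complex $\Hom_C(X^\dt, I^\dt)$ with $I^\dt$ an injective resolution compatible with the b\^ete filtration, e.g. a Cartan--Eilenberg resolution), or phrase everything via the finite truncation filtration already used in item 7 and check compatibility of the transition maps as $n$ varies. Either way the content is: on each finite stage one has a genuine filtered complex, the comparison is a filtered quasi-isomorphism by the degree-one case plus the five lemma on the finitely many graded pieces, and these are compatible with the colimit over $n$. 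Once the framework is pinned down, each step is formal; the only real obstacle is bookkeeping the compatibility of the injective (Cartan--Eilenberg) resolutions with the b\^ete filtration so that the right-hand side genuinely carries the claimed filtration.
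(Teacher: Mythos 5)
Your proof is correct and takes essentially the same route as the paper: items (4)--(7) of the discussion preceding the corollary already build the unfiltered comparison by bootstrapping from the single-object case through b\^ete truncations and colimits, and the filtered statement follows because that construction is manifestly compatible with the filtration b\^ete on $G^\dt$. Your explicit identification of the graded pieces as $\Hom_C(K^\dt, G^n)[-n]$ and your caveat about choosing a filtration-compatible (e.g.\ Cartan--Eilenberg) injective resolution to make the filtered $R\Hom$ precise simply spell out what the paper leaves implicit; they do not change the argument.
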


\subsection{Main theorem}

Recall $C$ is a site, possibly without finite products and fiber
products.

\begin{defn}
  For a fixed simplicial presheaf $X_\dt \in s\Pre C$, let $D(X_\dt)$
  denote the subcategory of $s\Pre C / X_\dt$ of local acyclic
  fibrations $K_\dt \to X_\dt$.  

  For any category of simplicial objects $E$, write $hE$ to be the same
  category with morphisms up to simplicial homotopy.  In general this
  is not an equivalence relation, we use the relation generated by
  simplicial homotopy.  
\end{defn}

\begin{prop}[cf. {\cite[ex V Theorem 7.3.2]{MR0354653}}]\label{p:verd}
  \quad
  
  Fix a simplicial presheaf $X_\dt \in s\Pre C$.
  \begin{enumerate}
  \item The homotopy category $hD(X_\dt)$ is cofiltered.
  \item For every $K_\dt \in hD(X_\dt)$, object $M \in C$, and
    covering morphism $u:M \to K_n$, there is an object $L_\dt \in
    hD(X_\dt)$ and a morphism $f:L_\dt \to K_\dt$ whose degree $n$
    part factors as \[ f_n:L_n \to M \overset{u}{\to} K_n.\]
  \item For every $K_\dt \in D(X_\dt)$, the structure morphism $K_\dt
    \to X_\dt$ induces a quasi-isomorphism \[ \Z{K^\dt} \to
    \Z{X^\dt}. \]
  \end{enumerate}
\end{prop}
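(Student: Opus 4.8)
The statement to prove is Proposition~\ref{p:verd}, which asserts that for a fixed simplicial presheaf $X_\dt$, the category $D(X_\dt)$ of local acyclic fibrations over $X_\dt$ satisfies the three hypotheses of Lemma~\ref{l:ext} (after passing to $hD(X_\dt)$). I want to mirror Verdier's proof (\cite[ex V Theorem 7.3.2]{MR0354653}) but carefully, since we do not have finite fiber products in $C$ — only in the categories of (simplicial) presheaves and sheaves, where they always exist. So the first order of business is to note that although $D(X_\dt) \subset s\Pre C / X_\dt$, the ambient category $s\Pre C$ is closed under all finite limits, and local acyclic fibrations are stable under base change and composition (Lemma~\ref{l:lac}), so the constructions below stay inside $D(X_\dt)$.

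For part (1), cofilteredness of $hD(X_\dt)$: given two objects $K_\dt \to X_\dt$ and $K'_\dt \to X_\dt$, the fiber product $K_\dt \times_{X_\dt} K'_\dt \to X_\dt$ is again a local acyclic fibration by Lemma~\ref{l:lac}(1)--(2), and it maps to both, so the diagram of objects is connected. For two parallel arrows $f, g: L_\dt \rightrightarrows K_\dt$ over $X_\dt$ I would equalize them up to homotopy by the standard path-object trick: form the object of maps $\cHom(\uDelta^1, K_\dt)$ and pull back along $(f,g)$ to produce $E_\dt \to L_\dt$; since $K_\dt \to X_\dt$ is a local acyclic fibration, so is $\cHom(\uDelta^1, K_\dt) \to \cHom(\uDelta^1, X_\dt) \times_{\cHom(\udDelta^1, X_\dt)} \cHom(\udDelta^1, K_\dt)$ by Proposition~\ref{p:lac}, and a diagram chase (using that $L_\dt \to X_\dt$ is also a local acyclic fibration) shows $E_\dt \to L_\dt$ is a local acyclic fibration; composing, $E_\dt \in D(X_\dt)$, and by construction $f|_{E_\dt}$ and $g|_{E_\dt}$ are simplicially homotopic. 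Hence in $hD(X_\dt)$ the two arrows are coequalized.

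Part (3) is immediate: it is exactly Proposition~\ref{p:jardine} applied to the structure morphism $K_\dt \to X_\dt$. So the real content is part (2): the lifting/factorization property. Given $K_\dt \in D(X_\dt)$ and a covering morphism $u: M \to K_n$ of presheaves, I want to build $L_\dt \to K_\dt$ in $D(X_\dt)$ whose degree-$n$ map factors through $u$. The idea is Verdier's: one "attaches $M$ in degree $n$" to $K_\dt$ by a pushout-type construction along the boundary inclusions $\udDelta^n \to \uDelta^n$ and $\uDelta^n \to$ (the $n{+}1$-simplices it must generate), i.e. one modifies $K_\dt$ via the $\cosk$/$\sk$ machinery in the Verdier ("special") description from Proposition~\ref{p:lac}. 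Concretely: let $L_\dt$ be the simplicial presheaf obtained from $K_\dt$ by replacing $K_n$ with $M$ — more precisely, $L_\dt$ is built so that $L_k = K_k$ for $k < n$, $L_n$ surjects onto the fiber product $P_n = K_n \times_{(\cosk_{n-1}K)_n}(\cosk_{n-1}L)_n$ through the appropriate composite involving $u$, and in higher degrees $L_\dt$ is coskeletal over what has been built, ensuring $L_\dt \to K_\dt$ is special. One checks the covering conditions of the Verdier criterion hold because $u$ is a covering morphism and $K_\dt \to X_\dt$ already was a local acyclic fibration, hence $L_\dt \to K_\dt$, and thus $L_\dt \to X_\dt$, is a local acyclic fibration.

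\medskip

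The main obstacle I expect is exactly this degree-$n$ attachment in part (2): making the coskeleton construction precise without finite limits in $C$ (they must all be taken in $s\Pre C$, which is fine, but one must track that the resulting presheaves are the ones Verdier would write down with representables), and verifying that the modified map $L_\dt \to K_\dt$ genuinely satisfies the local right lifting property against all $\dDelta^k \to \Delta^k$ — not just in degree $n$. The cleanest route is probably to verify the Verdier "special" conditions of Proposition~\ref{p:lac}(3) degree by degree: in degrees $< n$ nothing changes; in degree $n$ the relevant map factors through $u$, which is covering by hypothesis, composed with the covering morphism coming from $K_\dt \to X_\dt$ being special; in degrees $> n$ one takes $L_\dt$ to be suitably coskeletal so the conditions are automatic. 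Checking this requires a careful but essentially formal diagram chase with the $\cosk_{k-1}$ adjunction, and it is here — rather than in parts (1) or (3) — that the bookkeeping is delicate.
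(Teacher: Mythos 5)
Parts (1) and (3) of your proposal match the paper: part (3) is just Proposition~\ref{p:jardine}, and for part (1) the paper also reduces to coequalizing a pair $u_0, u_1 : L_\dt \rightrightarrows K_\dt$ by pulling back $\scHom(\uDelta^1, K_\dt) \to K_\dt \times K_\dt$ along $(u_0, u_1)$ --- exactly the object you call $E_\dt$, which the paper calls $F_\dt$. One caveat on the justification: Proposition~\ref{p:lac}(2) only asserts that the relative path-space map is a \emph{covering morphism}, not itself a local acyclic fibration, so you cannot cite it directly to conclude that $E_\dt \to L_\dt$ is a local acyclic fibration. The paper instead verifies the lifting property for $F_\dt \to X_\dt$ by hand, using Lemma~\ref{l:cofib} to lift against $\dDelta^k \times \Delta^1 \cup \Delta^k \times \dDelta^1 \hookrightarrow \Delta^k \times \Delta^1$ and checking compatibility with the two legs of the fiber product. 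If you do not want to assume a full simplicial model structure on $s\Pre C$, you need to supply this diagram chase.

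The real divergence is in part (2), and your proposed construction has a genuine gap. You want to set $L_k = K_k$ for $k < n$, replace $K_n$ by something mapping through $M$, and then proceed coskeletally. But once $L_k = K_k$ for $k < n$, the degeneracy maps $s_i : K_{n-1} \to L_n$ must exist and be compatible with $L_n \to K_n$; there is no reason the degeneracies $K_{n-1} \to K_n$ factor through the covering $u : M \to K_n$. If you patch this by throwing in formal degenerate copies of $K_{n-1}$ (\`a la Remark~\ref{r:split}), then $L_n$ contains pieces mapping directly into $K_n$, and the map $L_n \to K_n$ no longer factors through $M$ --- which is exactly what the proposition requires. The paper sidesteps this entirely: it takes $j_{n*}$ to be the right adjoint of the degree-$n$ evaluation functor $j_n^*$, observes that $j_{n*}$ carries covering morphisms of presheaves to local acyclic fibrations of simplicial presheaves, and defines $L_\dt$ as the pullback of $j_{n*}M \to j_{n*}K_n$ along the unit $K_\dt \to j_{n*}K_n$. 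Base change (Lemma~\ref{l:lac}) gives that $L_\dt \to K_\dt$ and hence $L_\dt \to X_\dt$ is a local acyclic fibration, and the factorization $L_n \to M \to K_n$ falls out by projecting $(j_{n*}M)_n = M^{\Hom([n],[n])}$ onto the identity coordinate. The degeneracies of $L_\dt$ exist automatically because $L_\dt$ is a limit of simplicial presheaves --- no splitting argument is needed, and $L_k$ for $k < n$ is \emph{not} simply $K_k$. I would recommend abandoning the coskeletal surgery and adopting the adjoint construction; it is what makes the statement true as written rather than merely true after replacing $M$ by something larger.
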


\begin{proof} \quad
  
  \begin{itemize}
  \item{Proof of part 3 of Proposition \ref{p:verd}}:
  
    This is just Proposition \ref{p:jardine}.

  \item{Proof of part 2 of Proposition \ref{p:verd}}:

    The following is mostly unchanged from Verdier's original.

    Let $j_{n*}$ the right adjoint of ``taking the degree $n$
    component.''  I claim $j_{n*}$ takes covering morphisms to local
    acyclic fibrations.  Let $f:A \to B$ be a covering morphism of
    presheaves.  Then we must check, for an open $U \in C$, that we
    can locally lift a diagonal in a diagram
    \[ \xymatrix{ \dDelta^k_n \ar[r] \ar@{^(->}[d] & A(U) \ar[d] \\
      \Delta^k_n \ar[r] & B(U). } \] But since $A \to B$ is a
    covering, it is a surjection after a refinement $V$ of $U$, so we
    can always lift $\Delta^k_n \to A(V)$.

    To prove part 2, form the cartesian diagram
    \[ \xymatrix{ L_\dt \ar[r]\ar[d] & j_{n*}M \ar[d] \\ K_\dt \ar[r]
      \ar[d] & j_{n*}j_n^*K_\dt = j_{n*}K_n \\ X_\dt } \] where the
    right vertical arrow is given by functoriality and the bottom
    horizontal arrow is given by adjunction.  The right vertical arrow
    is a local acyclic fibration by the above remark.  By Lemma
    \ref{l:lac} $L_\dt \to K_\dt \to X_\dt$ is a local acyclic
    fibration, and $L_n \to K_n$ factors as $L_n \to M \to K_n$.

  \item{Proof of part 1 of Proposition \ref{p:verd}}: \footnote{We
      warn the reader that there is a small, inconsequential error in
      Verdier's original.}

    Suppose we are given a diagram
    \[ \xymatrix { & A_\dt \ar[d] \\ B_\dt \ar[r] & K_\dt } \] in
    $D(X_\dt)$.  Set $L_\dt = A_\dt \times_{X_\dt} B_\dt$ which exists
    in $s\Pre C$.  Lemma \ref{l:lac} shows the canonical map $L_\dt
    \to X_\dt$ is a local acyclic fibration, so it is in $D(X_\dt)$.
    This gives a possibly non-commutative diagram
    \[ \xymatrix{ L_\dt \ar[rr] \ar[dd] & & A_\dt \ar[dl] \ar[dd] \\
      & K_\dt \ar[dr] \\ B_\dt \ar[rr] \ar[ru] & & X_\dt. } \] Hence
    we have two maps $L_\dt \rightrightarrows K_\dt$ which we wish to
    equalize up to homotopy.  Thus to prove $hD(X_\dt)$ is cofiltered,
    it is enough to show that for every pair of morphisms in
    $D(X_\dt)$
    \[ L_\dt \underset{u_1}{\overset{u_0}{\rightrightarrows}} K_\dt \]
    there is a morphism $v: M_\dt \to L_\dt$ in $D(X_\dt)$ so that the
    two morphisms $u_0v$ and $u_1v$ are homotopic, i.e. there are
    commutative diagrams
    \[ \xymatrix{ M_\dt \ar[r]^{e_i} \ar[d]_v & M_\dt \times \uDelta^1
      \ar[d]^w \\ L_\dt \ar[r]_{u_i} & K_\dt } \] for $i = 0, 1$,
    where the $e_i$ are the standard inclusions, and $w$ is the
    homotopy.

    The set of such diagrams for fixed $M_\dt$ and $L_\dt
    \underset{u_1}{\overset{u_0}{\rightrightarrows}} K_\dt$ is given
    by
    \[ \Hom(M_\dt \times \uDelta^1, K_\dt) \times_{\Hom(M_\dt, K_\dt
      \times K_\dt)} \Hom(M_\dt, L_\dt) \] where the map from
    $\Hom(M_\dt, L_\dt)$ to $\Hom(M_\dt, K_\dt \times K_\dt)$ is
    induced from $u_1 \times u_2$, and the map $\Hom(M_\dt \times
    \uDelta^1, K_\dt)$ to $\Hom(M_\dt, K_\dt \times K_\dt)$ is induced
    from $e_0 \times e_1$.

    The functor
    \[ \Hom(\blank \times \uDelta^1, K_\dt) \times_{\Hom(\blank, K_\dt
      \times K_\dt)} \Hom(\blank, L_\dt) \] is equal to
    \begin{align*}
      \Hom(\blank, \scHom(\uDelta^1, K_\dt)) \times_{\Hom(\blank,
        K_\dt \times K_\dt)} \Hom(\blank, L_\dt) \\= \Hom(\blank,
      \scHom(\uDelta^1, K_\dt) \times_{K_\dt \times K_\dt} L_\dt)
    \end{align*}
    and so is representable.  Call this representing object $F_\dt$.
    We must show that $F_\dt \to X_\dt$ is a local acyclic fibration.
    $F$ is the pullback in the square in the diagram \[ \xymatrix{
      F_\dt \ar[rr] \ar[d] & & \scHom(\uDelta^1, K_\dt) \ar[d] \\
      L_\dt \ar[r]^d & L_\dt
      \times L_\dt \ar[r]^{u_0 \times u_1} & K_\dt \times K_\dt \ar[dr] \\
      & & & X_\dt \times X_\dt \ar@<.3ex>[dr]^{p_1} \ar@<-.3ex>[dr]_{p_2} \\
      & & & & X_\dt } \] where $d$ is the diagonal.  Note all maps to
    $X_\dt$ are the same, and \[ K_\dt \times K_\dt =
    \scHom(\udDelta^1, K_\dt).\]

    Thus to lift a diagram
    \[ \xymatrix{ \dDelta^k \ar[r] \ar@{^(->}[d] & F_\dt(U) \ar[d] \\
      \Delta^k \ar[r] & X_\dt(U) } \] we have to lift from $X_\dt$ to
    $L_\dt, K_\dt \times K_\dt$ and $\scHom(\uDelta^1, K_\dt)$ with
    the following compatibility condition: the lift $\Delta^k \to
    L_\dt(V)$ yields by composition with the diagonal a lift $\Delta^k
    \to L_\dt(V) \times L_\dt(V)$, or a map $\Delta^k \times \dDelta^1
    \to L_\dt(V)$.  By composition with the map $u_0 \times u_1$ we
    get a lift $\Delta^k \times \dDelta^1 \to K_\dt(V)$.  Meanwhile a
    lift to $\scHom(\uDelta^1, K_\dt)$ is a map $\Delta^k \times
    \Delta^1 \to K(V)$, which by pre-composition with the inclusion
    $\dDelta^1 \subset \Delta^1$ yields a map $\Delta^k \times
    \dDelta^1 \to K(V)$.  We require these two maps are equal.

    But we can guarantee this as follows: giving the lifting diagram
    above, we extend by projection to the first factor to a diagram
    \[ \xymatrix{\dDelta^k \times \dDelta^1 \ar@{^(->}[d] \ar[r] &
      \dDelta^k \ar[r] \ar@{^(->}[d] & F_\dt(V) \ar[r] & L_\dt(V)
      \ar[d] \\ \Delta^k \times \Delta^1 \ar@{-->}[urrr] \ar[r] &
      \Delta^k \ar[rr] && X_\dt(V).  } \] By Lemma \ref{l:cofib} we
    can lift to get the dashed arrow.  This yields a composition
    \[ \Delta^k \times \dDelta^1 \to \Delta^k \times \Delta^1 \to
    L_\dt(V) \to K_\dt(V) \] e.g. lifts to $L_\dt(V) \times L_\dt(V)$
    and $\scHom(\uDelta^1, K_\dt)(V)$ which map to the same the lift
    to $K_\dt(V) \times K_\dt(V)$.  The compatibility of the maps to
    $X_\dt$, and the fact that $L_\dt \to L_\dt \times L_\dt$ is the
    diagonal, ensures that these lifts are compatible with the maps in
    the fiber product.  
  \end{itemize}
\end{proof}

Fix a $X_\dt \in s\Pre C$.  Let $\Ab(\Sh C)$ be the category of
sheaves of abelian groups on $C$.  A simplicial presheaf $K_\dt$
yields a negative cochain complex of sheaves of free abelian groups
$\Z{K^\dt}$.  We abuse notation and also call $D(X_\dt)$ the image of
$D(X_\dt)$ inside $\Ch^-(\Ab(\Sh C))$ under this functor.  Note that
simplicial homotopy of simplicial presheaves becomes chain homotopy of
cochain complexes under this functor.

Our basic result on hyper-\v{C}ech cohomology is

\begin{thm}\label{t:main}
  Let $X_\dt$ be a simplicial presheaf, and $hD(X_\dt)$ the homotopy
  category of local acyclic fibrations $K_\dt \to X_\dt$.  Then for a
  bounded below complex of sheaves of abelian groups $\cF^\dt$ with
  the filtration b\^{e}te
  \[ \varinjlim_{K_\dt \in hD(X_\dt)} H^p(\Tot\Hom(\Z{K^\dt}, \cF^\dt))
  \simeq \Ext^p(\Z{X^\dt}, \cF^\dt) \] and there is a
  filtered quasi-isomorphism of ind-objects in the derived category
  \[ \underset{K_\dt \in hD(X_\dt)}{"\varinjlim"} \Tot\Hom(\Z{K^\dt},
  \cF^\dt) \simeq R\Hom(\Z{X^\dt}, \cF^\dt). \]
\end{thm}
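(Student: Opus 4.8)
The plan is to deduce Theorem~\ref{t:main} from the abstract Lemma~\ref{l:ext} and Corollary~\ref{c:rhom} by taking $C = \Ab(\Sh C)$ (sheaves of abelian groups on the site), $X^\dt = \Z{X^\dt}$, $G^\dt = \cF^\dt$, and $D = D(X_\dt)$ viewed inside $\Ch^-(\Ab(\Sh C))$ via the functor $K_\dt \mapsto \Z{K^\dt}$. The category $\Ab(\Sh C)$ is abelian with enough injectives, and $\cF^\dt$ is bounded below, so all the standing hypotheses of those results are met as soon as we verify that this $D$ satisfies conditions (1)--(3) of Lemma~\ref{l:ext}. Once that is done, the first displayed isomorphism is exactly the conclusion of Lemma~\ref{l:ext}, and the filtered quasi-isomorphism of ind-objects is exactly Corollary~\ref{c:rhom}, with the b\^ete filtration on $\cF^\dt$ matching up on both sides. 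So the entire content of the proof is the translation of Proposition~\ref{p:verd} into the three axioms.

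Concretely, I would proceed as follows. First, condition (3): every $K_\dt \in D(X_\dt)$ has structure map a local acyclic fibration, so by Proposition~\ref{p:jardine} (= part 3 of Proposition~\ref{p:verd}) the induced map $\Z{K^\dt} \to \Z{X^\dt}$ is a quasi-isomorphism of complexes of sheaves; this is axiom (3) verbatim. Second, condition (1): the functor $K_\dt \mapsto \Z{K^\dt}$ sends simplicial homotopies to chain homotopies, hence sends $hD(X_\dt)$ to (a quotient of) the homotopy category of the image $D \subset \Ch^-$; part 1 of Proposition~\ref{p:verd} says $hD(X_\dt)$ is cofiltered, and since a functor from a cofiltered category has cofiltered image (and the homotopy relation on the image is a further quotient), $hD$ in the sense of Lemma~\ref{l:ext} is cofiltered. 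Third, condition (2): given $K^\dt = \Z{K^\dt} \in D$ and an epimorphism $u: M \to \Z{K^n}$ in $\Ab(\Sh C)$, I need a complex $L^\dt \in D$ and $f: L^\dt \to K^\dt$ with $f^n$ factoring through $u$. The natural move is to choose a presheaf-level covering morphism whose associated free-abelian-group sheaf surjects onto $\Z{K^n}$ and refines the given $u$; more precisely, since $\Z{K_n}$ surjects onto $\Z{K^n}$ and any epimorphism of sheaves of abelian groups onto a free sheaf $\Z{K^n}$ can be pulled back along a covering morphism $N \to K_n$ of presheaves so that $\Z{N} \to \Z{K^n}$ factors through $M$, one applies part 2 of Proposition~\ref{p:verd} to the covering morphism $N \to K_n$ to produce $L_\dt \in D(X_\dt)$ with $L_n \to N \to K_n$, and then $f^n = \Z{L^n} \to \Z{N} \to M \to \Z{K^n}$ gives the required factorization.

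The main obstacle is precisely this last point: matching the \emph{sheaf-of-abelian-groups} epimorphism $u: M \to \Z{K^n}$ in Lemma~\ref{l:ext}(2) with the \emph{covering morphism of presheaves of sets} that Proposition~\ref{p:verd}(2) consumes. One has to check that every epimorphism onto $\Z{K^n}$ in $\Ab(\Sh C)$ is dominated, after passing to free abelian sheaves, by one coming from a covering morphism $N \to K_n$ in $\Pre C$ --- using that $\Z{K_n} \twoheadrightarrow \Z{K^n}$ is already a (split, degreewise) surjection onto the normalized complex term, that $\Z{-}$ of a covering morphism is an epimorphism of sheaves, and a diagram chase to arrange the factorization through the given $M$. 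This is the one spot where the argument is not purely formal; the rest is bookkeeping, plus the observation that the two b\^ete filtrations correspond under the identification, which is immediate from the construction in Corollary~\ref{c:rhom}.
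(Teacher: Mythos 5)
Your proof is correct and follows exactly the paper's route: apply Lemma~\ref{l:ext} and Corollary~\ref{c:rhom} with $C = \Ab(\Sh C)$ and $D$ the image of $D(X_\dt)$ in $\Ch^-(\Ab(\Sh C))$ under $K_\dt \mapsto \Z{K^\dt}$, invoking Proposition~\ref{p:verd} to verify the three axioms. You also correctly flag the one place where the paper's one-line proof is silent --- converting an epimorphism $u\colon M \to \Z{K^n}$ in $\Ab(\Sh C)$ into a covering morphism of presheaves of sets $N \to K_n$ so that $\Z{N} \to \Z{K^n}$ factors through $M$ --- and the construction you indicate (take $N := K_n \times_{\Z{K_n}} M$ as presheaves of sets, using the unit $K_n \to \Z{K_n}$, so that $N \to K_n$ is a covering morphism and $\Z{N} \to M \to \Z{K_n}$ agrees with $\Z{N} \to \Z{K_n}$ by the universal property of $\Z{-}$) is the right way to close that gap.
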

\begin{proof}
  According to Proposition \ref{p:verd}, $D(X_\dt)$ is a subcategory
  of $\Ch^-(\Ab(\Sh C))$ which satisfies the properties of the Lemma
  \ref{l:ext}, the lemma on computing $\Ext$, which gives the result.
  For the last part, apply Corollary \ref{c:rhom}.
\end{proof}

\subsection{Semi-representability and finite representability}

\begin{defn}
  A presheaf is \emph{semi-representable} if it is isomorphic to a
  coproduct of representable presheaves.  A presheaf is \emph{finitely
    representable} if it is isomorphic to a finite coproduct of
  representable presheaves.  A simplicial presheaf is
  semi-representable (resp. finitely representable) if all its
  components are.
\end{defn}

The theorems above show representability hypotheses are not important
in the computation of sheaf cohomology.  However typically one wishes
to compute with representable or semi-representable presheaves.  For
this we have

\begin{lem}[A Godement-type lemma]\label{l:god} Any presheaf is
  covered by a semi-\\representable presheaf.
\end{lem}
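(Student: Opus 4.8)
The plan is to construct the covering semi-representable presheaf by hand, using the standard device of taking the coproduct of all representables that map into a given presheaf. Given a presheaf $P$ on $C$, I would form the set $S$ of all pairs $(U, s)$ where $U \in C$ and $s \in P(U)$; equivalently, by Yoneda, $s$ is a morphism of presheaves $h_U \to P$ from the representable presheaf $h_U$. Then set
\[ \widetilde P := \coprod_{(U,s) \in S} h_U, \]
which is semi-representable by construction, together with the tautological morphism $\epsilon : \widetilde P \to P$ assembled from all the $s$.

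Next I would verify that $\epsilon$ is a covering morphism, i.e. that the induced map of associated sheaves is an epimorphism. By the characterization of epimorphisms of sheaves (\cite[II.5.2]{MR0354652}), it suffices to show that $\epsilon$ is a local epimorphism of presheaves: for every $U \in C$ and every section $t \in P(U)$, there is a covering sieve $R$ of $U$ such that for each $V \to U$ in $R$, the restriction $t|_V$ lifts to $\widetilde P(V)$. But this is immediate — indeed we do not even need to refine: the pair $(U, t)$ is itself an element of $S$, so the component $h_U \hookrightarrow \widetilde P$ carries $\id_U \in h_U(U)$ to an element of $\widetilde P(U)$ mapping to $t$. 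Hence $\epsilon$ is already an epimorphism of presheaves, a fortiori a covering morphism.

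Since the statement only asserts existence of a semi-representable cover, this suffices. The only point requiring any care is set-theoretic: the index set $S$ is a proper class a priori if $C$ is not small, so strictly speaking one should either assume $C$ is essentially small (as is implicit throughout, since $C$ is a site in the sense of SGA4) or replace $S$ by a small cofinal subset — but for a genuine site this is not an obstacle. I expect no serious difficulty here; the lemma is the presheaf-theoretic analogue of the fact that every object is a quotient of a free object, and the "Godement-type" label refers to its later use in iterating to produce semi-representable hypercovers (via Proposition \ref{p:verd} applied degreewise).
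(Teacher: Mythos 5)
Your construction and verification coincide with the paper's proof: both take the tautological coproduct $\coprod_{(U,s)} h_U \to P$ indexed by all sections of $P$ over all objects of $C$, observe via Yoneda that it is already surjective at the level of presheaves, and conclude it is a covering morphism since sheafification preserves epimorphisms. The set-theoretic remark is a reasonable aside but not an issue the paper addresses, since $C$ is implicitly small.
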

\begin{proof} For a presheaf $F$, we have the presheaf surjection
  \[ \coprod_{(X \in C, s \in F(X))} h_X \to F \] where $h_X$ denotes
  the representable presheaf given by $\Hom(\blank, X)$.  Since
  $\Hom(X, F) = F(X)$ the morphism is given by $s$.  This is obviously
  surjective on the level of sets, and since sheafification is exact,
  is a covering.
\end{proof}

\begin{rem}\label{r:split}
  We make use of the formalism of \emph{split simplicial objects},
  cf. \cite[ex Vbis 5.1]{MR0354653} or \cite[6.2.2]{MR0498552}, which
  allows us to construct semi-representable simplicial presheaves
  inductively by only specifying the non-degenerate pieces.  The
  degeneracies are satisfied by adding copies of the lower degree
  pieces; all maps between such objects are isomorphisms, so will
  satisfy whatever requirements we have of them (properness,
  coverings, et cetera) and will come equipped inductively via the
  degeneracies with maps to any desired target.
\end{rem}

\begin{prop}
  Let $SR(X_\dt)$ be the full subcategory of $D(X_\dt)$ of objects
  whose components are semi-representable.  Then $hSR(X_\dt)$ is a
  cofinal subcategory of $hD(X_\dt)$.  Thus for a bounded below
  complex of sheaves of abelian groups $\cF^\dt$ with the filtration
  b\^{e}te
  \[ \varinjlim_{K_\dt \in hSR(X_\dt)} H^p(\Tot\Hom(\Z{K^\dt},
  \cF^\dt)) \simeq \Ext^p(\Z{X^\dt}, \cF^\dt) \] and there is a
  filtered quasi-isomorphism of ind-objects in the derived category
  \[ \underset{K_\dt \in hSR(X_\dt)}{"\varinjlim"} \Tot\Hom(\Z{K^\dt},
  \cF^\dt) \simeq R\Hom(\Z{X^\dt}, \cF^\dt). \]
\end{prop}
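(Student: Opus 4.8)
The plan is to deduce everything from Theorem \ref{t:main} together with the cofinality of $hSR(X_\dt)$ in $hD(X_\dt)$, since a filtered colimit over a cofiltered category is unchanged upon passing to a cofinal subcategory (and likewise for the associated ind-object in the derived category). So the real content is the cofinality statement, and that is where essentially all the work lies. Recall that to prove $hSR(X_\dt) \subset hD(X_\dt)$ is cofinal, one must show that for every $K_\dt \in D(X_\dt)$ there is a semi-representable $L_\dt \in D(X_\dt)$ admitting a map $L_\dt \to K_\dt$ over $X_\dt$, and moreover that any two parallel maps from a semi-representable object into $K_\dt$ can be equalized up to homotopy by precomposing with a map from a semi-representable object; but the second condition follows formally from the first combined with part (1) of Proposition \ref{p:verd} (one equalizes inside $hD(X_\dt)$, then refines the equalizing object to a semi-representable one). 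Hence the crux is the existence of a semi-representable local acyclic fibration refining a given $K_\dt \to X_\dt$.

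To construct such an $L_\dt$, I would build it by induction on skeleta using the formalism of split simplicial objects (Remark \ref{r:split}), mimicking Verdier's construction of hypercovers but now exploiting the Godement-type Lemma \ref{l:god} to replace objects by semi-representable covers at each stage. Concretely: set $L_0$ to be a semi-representable presheaf covering $K_0$ (Lemma \ref{l:god}), which is legitimate since $L_0 \to K_0$ covering ensures the degree-$0$ covering condition of Verdier's criterion. Inductively, having built the $(n-1)$-skeleton of a split semi-representable $L_\dt$ with a map to $\mathrm{cosk}_{n-1}K_\dt$ lying over $X_\dt$, I would form the matching object $P_n := K_n \times_{(\cosk_{n-1}K)_n} (\cosk_{n-1}L)_n$, choose by Lemma \ref{l:god} a semi-representable presheaf covering $P_n$, and let that be the presheaf of non-degenerate $n$-simplices of $L_\dt$, adding degeneracies as in Remark \ref{r:split}. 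By construction the map $L_n \to P_n$ is a covering morphism, which is exactly Verdier's condition (via Proposition \ref{p:lac}, part 3) for $L_\dt \to K_\dt$ to be a local acyclic fibration; composing with $K_\dt \to X_\dt$ and using part (1) of Lemma \ref{l:lac} gives $L_\dt \in SR(X_\dt)$. The compatibility of all the structure maps with the maps to $X_\dt$ is automatic from the split formalism, as noted in Remark \ref{r:split}.

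I expect the main obstacle to be bookkeeping rather than conceptual: one must check that the matching objects $P_n$ (fiber products of presheaves, which do exist in $s\Pre C$ even though $C$ lacks fiber products) behave correctly with the split structure, i.e. that the degenerate part of $L_n$ maps into the ``degenerate part'' of $P_n$ so that the inductively-constructed map $L_\dt \to K_\dt$ is genuinely simplicial and that $L_\dt \to X_\dt$ really is special in Verdier's sense at every level simultaneously. This is where Verdier's own argument for hypercovers (\cite[ex V Thm 7.3.2]{MR0354653}) is delicate, and the only new ingredient here is that at each inductive step we are free to enlarge by a semi-representable cover; since $C$ is not assumed to have fiber products, I would be careful to verify that every fiber product invoked is one of simplicial presheaves (which always exist), never one of objects of $C$. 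Once the cofinality of $hSR(X_\dt)$ is in hand, the two displayed isomorphisms follow immediately by restricting the colimit and ind-object in Theorem \ref{t:main} along the cofinal inclusion, and the filtration b\^{e}te statement is preserved since the restriction is compatible with the filtrations termwise.
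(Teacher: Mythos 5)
Your proposal is correct and follows essentially the same route as the paper: reduce to producing, for each local acyclic fibration $K_\dt \to X_\dt$, a semi-representable refinement $L_\dt$, and build $L_\dt$ by induction on skeleta using the Godement-type Lemma \ref{l:god} to cover the matching object $(i_{n-1*}L)_n \times_{(\cosk_{n-1}K)_n} K_n$ at each stage and the split-simplicial formalism of Remark \ref{r:split} to supply degeneracies, then verify the local acyclic fibration property via Verdier's criterion (Proposition \ref{p:lac}) and Lemma \ref{l:lac}. The only difference is expository: you spell out explicitly why producing a single semi-representable object over each $K_\dt$ plus Proposition \ref{p:verd}(1) suffices for cofinality of $hSR(X_\dt)$ in $hD(X_\dt)$, whereas the paper simply asserts the reduction.
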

\begin{proof} It is enough to show, for any local acyclic fibration
  $K_\dt \to X_\dt$, there is a local acyclic fibration $L_\dt \to
  K_\dt$ with $L_\dt$ semi-representable.  We construct one
  inductively as follows: set $L_0 \to K_0$ a semi-representable cover
  given by the Godement lemma.  Having constructed $L_\dt$ to degree
  $i-1$, set \[ L' \to (i_{i-1*}L)_i \times_{(\cosk_{i-1}K)_i} K_i\]
  to be a semi-representable cover given by the Godement lemma.  We
  set $L_i$ to be the union of $L'$ and the copies of the $L_k$ for $k
  < i$ needed to satisfy the degeneracy relations; see Remark
  \ref{r:split}.
\end{proof}

\begin{rem}
  This gives a generalized version of Verdier's theorem on hypercovers
  (\cite[ex V Theorem 7.4.1]{MR0354653}.)
\end{rem}

On sites without finite products and fiber products, we need some
additional hypotheses for finite representability.  The following
result will be useful in application to $\Sm_h \subset \Sch_h$,
cf. Corollary \ref{c:rep}.

\begin{prop}\label{p:frep}
  Suppose the site $C$ is full subcategory of a larger site $C'$, and
  \begin{enumerate}
  \item The topology on $C'$ is generated by a pretopology all of
    whose covering families are finite.
  \item $C$ has the induced topology.
  \item $C'$ has finite products and fiber products.
  \item Every $Y \in C'$ can be covered by an $X \in C$.
  \end{enumerate}
  Let $X_\dt \in s\Pre C'$ be a finitely representable simplicial
  presheaf.  Let $FR_C(X_\dt)$ be the subcategory of $SR_{C'}(X_\dt) =
  \set{\text{semi-representable local acyclic fibrations in } C'}$
  whose components are finitely representable and in $C$.  Then
  $hFR_C(X_\dt)$ is cofinal in $hSR_{C'}(X_\dt)$.  Thus for a bounded
  below complex of sheaves of abelian groups $\cF^\dt$ with the
  filtration b\^{e}te
  \[ \varinjlim_{K_\dt \in hFR_C(X_\dt)} H^p(\Tot\Hom(\Z{K^\dt},
  \cF^\dt)) \simeq \Ext^p(\Z{X^\dt}, \cF^\dt) \] and there is a
  filtered quasi-isomorphism of ind-objects in the derived category
  \[ \underset{K_\dt \in hFR_C(X_\dt)}{"\varinjlim"} \Tot\Hom(\Z{K^\dt},
  \cF^\dt) \simeq R\Hom(\Z{X^\dt}, \cF^\dt). \]
\end{prop}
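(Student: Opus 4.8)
The plan is to reduce everything to a cofinality statement between homotopy categories, exactly as in the proof of the semi-representable version just given. Concretely, by Theorem \ref{t:main} and the preceding Proposition, the two displayed formulas hold with $hSR_{C'}(X_\dt)$ in place of the index category; so it suffices to prove that $hFR_C(X_\dt)$ is cofinal in $hSR_{C'}(X_\dt)$. For this I must show: (a) given any semi-representable local acyclic fibration $K_\dt \to X_\dt$ over $C'$, there is a finitely representable local acyclic fibration $L_\dt \to X_\dt$ with all components in $C$ together with a morphism $L_\dt \to K_\dt$ over $X_\dt$; and (b) the usual uniqueness-up-to-homotopy statement making the comparison filtered. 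Part (b) is formal once the objects and maps are in $D(X_\dt)$, since $hD(X_\dt)$ is cofiltered by Proposition \ref{p:verd} and $hFR_C(X_\dt)$ is a full subcategory; so the content is entirely in (a).

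To carry out (a) I would build $L_\dt$ by the same split-simplicial induction as in the semi-representable proposition, but replacing the Godement lemma by a \emph{finite} Godement-type step. The key point is a lemma: any object $Y \in C'$ admits a \emph{finite} cover by representables coming from $C$. This follows from the hypotheses: by (4) choose some $X \in C$ with a covering morphism $X \to Y$; by (1) the topology is generated by a pretopology whose covering families are finite, so after refining we may assume the covering of $Y$ is a finite family $\{Y_i \to Y\}$; by (4) again each $Y_i$ is covered by some $X_i \in C$, and composing (a covering of a covering is a covering) we get a finite covering family $\{X_i \to Y\}$ with $X_i \in C$, i.e. a covering morphism $\coprod_i h_{X_i} \to h_Y$ with the source finitely representable and in $C$. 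More generally, a finitely representable presheaf $\coprod_{j} h_{Y_j}$ (finite coproduct) is covered by a finitely representable presheaf with components in $C$, by doing this to each $Y_j$ and taking the (finite) coproduct.

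Now run the induction. Set $L_0 \to K_0$ a finite representable cover over $C$ of the finitely representable $K_0$, via the lemma. Inductively, having built $L_\dt$ through degree $i-1$ with all components finitely representable and in $C$, form the matching object $P_i = (\cosk_{i-1}L)_i \times_{(\cosk_{i-1}K)_i} K_i$, which is a finite limit computed in $s\Pre C'$ of finitely representable presheaves in $C$ \emph{and} of $K_i$; since $C'$ has finite products and fiber products (hypothesis (3)), $P_i$ is again finitely representable — here I use that finite limits of representables over $C'$ are representable in $C'$, and that $K_\dt$ itself is finitely representable. Apply the finite Godement lemma to get a finite representable cover $L' \to P_i$ with $L' \in C$, and let $L_i$ be the union of $L'$ with the finitely many copies of lower $L_k$ forced by the degeneracies (Remark \ref{r:split}); this stays finitely representable and in $C$. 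The resulting $L_\dt \to K_\dt$ is a local acyclic fibration by Verdier's criterion (Proposition \ref{p:lac}, conditions on $\phi_k$ being covering morphisms), hence $L_\dt \to X_\dt$ is one by Lemma \ref{l:lac}(1), so $L_\dt \in FR_C(X_\dt)$.

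The main obstacle is making sure the matching objects $P_i$ stay \emph{finitely} representable throughout the induction: this is where hypotheses (1) and (3) are essential and where the argument genuinely differs from the semi-representable case. In $\Sm$ there are no fiber products, which is exactly why one passes to the ambient $C' = \Sch_h$; one must check that the coskeleton and the fiber product $(\cosk_{i-1}L)_i \times_{(\cosk_{i-1}K)_i} K_i$, a finite inverse limit, lands back among finitely representable presheaves over $C'$ — true because $\cosk_{i-1}$ is built from finite limits and $C'$ has all finite limits (finite products plus fiber products give finite limits), and finiteness of the index is what keeps the coproduct description finite. Once $P_i$ is finitely representable, the finite Godement step keeps us in $C$ with finite coproducts. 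After that, cofinality (b) and the transfer of the two displayed isomorphisms are immediate from Theorem \ref{t:main} and the semi-representable proposition.
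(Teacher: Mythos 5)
There is a genuine gap, and it lies exactly where you flag the ``main obstacle.''  Your induction requires the matching object
\[
P_i = (\cosk_{i-1}L)_i \times_{(\cosk_{i-1}K)_i} K_i
\]
to be finitely representable, and you justify this by writing ``$K_\dt$ itself is finitely representable.''  But $K_\dt$ is an \emph{arbitrary} object of $SR_{C'}(X_\dt)$, hence only semi-representable: only the base $X_\dt$ is assumed finitely representable, and the whole point of the proposition is to dominate semi-representable $K_\dt$ with infinitely many representable summands allowed.  Once $K_i$ and $(\cosk_{i-1}K)_i$ are infinite coproducts, the fiber product $P_i$ is semi- but not finitely representable (concretely, for each of the finitely many summands of $(\cosk_{i-1}L)_i$ there may be infinitely many summands of $K_i$ lying over the same summand of $(\cosk_{i-1}K)_i$).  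Your ``finite Godement lemma'' is only stated for objects of $C'$ and for \emph{finitely} representable presheaves, so it does not apply to $P_i$; and indeed a general semi-representable presheaf cannot be covered by a finitely representable one, so no direct Godement step can rescue this.  The same confusion already appears in your degree-$0$ step, where you apply the finite lemma to $K_0$.

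The missing ingredient is a covering-morphism refinement statement: a covering morphism $F\to G$ in $C'$ with $F$ semi-representable and $G$ \emph{finitely} representable can be refined through $E\to F\to G$ with $E$ finitely representable, with components in $C$, and $E\to G$ still a cover.  (Sketch: reduce to $G=h_Y$ a single representable, note that each summand $h_{X_\alpha}\hookrightarrow F$ factors through a unique summand of a coproduct by Yoneda, so the given covering family of $Y$ is a family of objects of $C'$; refine it to a finite family using hypothesis (1), then cover each piece by an object of $C$ using hypothesis (4).)  With that observation in hand, your ``match over $K$'' induction can be repaired: $P_i$ is semi-representable, the projection $P_i \to (\cosk_{i-1}L)_i$ is a covering morphism onto a finitely representable presheaf (here you need the factoring observation to see $(\cosk_{i-1}L)_i$ is finitely representable, not merely a finite limit of such), and the refinement lemma produces $L_i\to P_i$ finitely representable with components in $C$.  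The paper's own proof instead forms the matching object over $X_\dt$ (which \emph{is} finitely representable because $X_\dt$ is and $L_{\le i-1}$ is), covers it, and then performs a second fiber product with $K_i$ followed by the same refinement lemma to obtain the map to $K_\dt$; both routes work, but both require the refinement lemma you omitted.

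Your paragraph on part (b) is essentially fine once (a) is repaired, since a full subcategory of a cofiltered category satisfying (a) is cofinal, though it would be cleaner to note that $hSR_{C'}(X_\dt)$ is itself cofiltered by the preceding proposition before invoking this.
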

\begin{proof}
  The hypotheses on $C$ and $C'$ show that
  \begin{enumerate}
  \item Every covering morphism $F \to G$ in $C'$ where $F$ is
    semi-representable and $G$ is finitely representable can be
    refined $E \to F \to G$ where $E$ is finitely representable in $C$
    and $E \to G$ is a covering morphism.
  \item Finite limits of finitely representable presheaves in $C'$ can
    be covered by finitely representable presheaves in $C$.
  \end{enumerate}

  By Verdier's theorem it is enough to show, for every
  semi-representable local acyclic fibration $K_\dt \to X_\dt$ in
  $C'$, there is a finitely representable local acyclic fibration
  $L_\dt \to X_\dt$ with $L_\dt$ in $C$ and a map over $X_\dt$ of
  simplicial presheaves $L_\dt \to K_\dt$.  Set $L_0 \subset K_0$ a
  subpresheaf which is finitely representable in $C$ and covers $X_0$.
  Suppose inductively we have constructed $L_\dt$ to degree $i-1$.
  Then $(i_{i-1*}L)_i \times_{(\cosk_{i-1}X)_i} X_i$ is a finite limit
  of finitely representable presheaves, so cover it with $L'$ a
  finitely representable in $C$.  Construct the fiber product $L'
  \times_{(\cosk_{i-1}K)_i} K_i$, cover it with a semi-representable
  $L''$.  Then $L'' \to L'$ is a cover of a finitely representable by
  a semi-representable, so take $L''' \to L'' \to L'$ with $L'''$
  finitely representable in $C$and $L''' \to L'$ a cover.  As before
  we have to add copies of $L_k$ for $k < i$ to satisfy degeneracy
  conditions, cf. Remark \ref{r:split}.  By construction there is a
  map $L_\dt \to K_\dt$ and the composite $L_\dt \to X_\dt$ is a local
  acyclic fibration.
\end{proof}

\section{{$h$}- AND {$ucd$}-TOPOLOGIES}

\subsection{The $h$-topology}

\begin{defn}
  A $\C$-scheme is a separated scheme finite type over the field of
  complex numbers.  Let $\Sch$ denote the category of $\C$-schemes,
  and let $\Sm \subset \Sch$ denote the full subcategory of smooth
  $\C$-schemes.  If $X \in \Sch$, let $\Sch / X, \Sm / X$ denote the
  categories of $\C$-schemes and smooth $\C$-schemes over $X$.
\end{defn}

We recall Voevodsky's (\cite{MR1403354}) $h$-topology:

\begin{defn}
  A morphism $f:X \to Y$ is called a \emph{topological epimorphism} if
  the underlying morphism of topological spaces is a topological
  quotient map: it is surjective on sets and $U \subset Y$ is open if
  and only if $f^\inv(U)$ is open in $X$.  A \emph{universal
    topological epimorphism}, or an $h$-covering, is a morphism $X \to
  Y$ so that for any $Z \to Y$, the base change morphism
  \[ X \times_Z Y \to Z \] is a topological epimorphism. 
\end{defn}

A useful necessary but not sufficient characterization of
$h$-coverings is given by the following.

\begin{prop}[{\cite[Proposition 3.1.3]{MR1403354}}]
  Let $f:X \to Y$ be a morphism of schemes, and $X' \subset X$ the
  union of the irreducible components of $X$ which dominates some
  component of $Y$.  If $f$ is an $h$-covering then $f(X') = Y$.
\end{prop}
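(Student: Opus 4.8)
The plan is to reduce immediately to the two key features of $h$-coverings that make the statement believable: that an $h$-covering is surjective, and that it remains surjective after any base change, in particular after base change to the generic points of $Y$. First I would reduce to the case $Y$ irreducible: the question of whether $f(X') = Y$ can be checked on each irreducible component $Y_i$ of $Y$ separately, since $X'$ is defined componentwise over $Y$ — an irreducible component of $X$ dominates some component of $Y$ iff its image is dense in that component — and the restriction $f^{-1}(Y_i) \to Y_i$ of an $h$-covering is again an $h$-covering by stability under base change. So assume $Y$ is irreducible with generic point $\eta$.

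Next I would base change along $\Spec \mathscr{O}_{Y,\eta} \to Y$, or more simply along $\Spec k(\eta) \to Y$. Since $f$ is an $h$-covering, $X_\eta := X \times_Y \Spec k(\eta) \to \Spec k(\eta)$ is a topological epimorphism, in particular surjective, so $X_\eta$ is nonempty. Pick a point $x \in X_\eta$; it corresponds to a point of $X$ lying over $\eta$, hence lying on some irreducible component $X_0$ of $X$ whose image in $Y$ contains $\eta$ and is therefore dense, i.e. $X_0 \subset X'$. The key point now is that the closure of $x$ in $X$ meets $X'$ (indeed $x$ itself, viewed in $X$, lies in $X'$), and $f$ maps the generic point of $X_0$ to $\eta$; so $f(X')$ is a constructible set (by Chevalley, using finite type) containing $\eta$ and contained in $Y$, hence $f(X') \supseteq \overline{\{\eta\}} = Y$ would follow if $f(X')$ were closed. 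That is not automatic, so instead I would argue: $f(X')$ contains $\eta$, and $X' \to Y$ is still dominant (each component of $X'$ dominates $Y$ since $Y$ is now irreducible), so we must upgrade "dense image" to "surjective."

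To get surjectivity of $X' \to Y$, I would use base change again at an arbitrary point $y \in Y$: the fiber $X_y \to \Spec k(y)$ is surjective since $f$ is an $h$-covering, so choose $x \in X_y$. I claim $x$ lies on a component contained in $X'$: any component $Z$ of $X$ through $x$ that does not dominate $Y$ has image a proper closed subset of $Y$, and one checks — using that $f$ is an $h$-covering, hence that its restriction over the generic point is surjective, so that the "dominating part" $X'$ already surjects onto a dense open — that one can always choose $x$ on a dominating component. Concretely: over the dense open $U \subseteq Y$ on which $X' \to Y$ is already known to be surjective (it contains $\eta$ and surjectivity is constructible), there is nothing to prove; over a point $y \notin U$, base change the $h$-covering $X' \sqcup (X\setminus X') \to Y$... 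The cleanest route, which I expect to be the actual argument, is: base change $f$ along $\Spec \mathscr{O}_{Y,y} \to Y$ for $y$ the generic point of an arbitrary closed subset, note the result is an $h$-covering of a local scheme, hence surjective, and the only components of the total space that can hit the closed point are specializations of components dominating $\Spec \mathscr{O}_{Y,y}$, i.e. images of components of $X'$ — here one uses that specializations lift along the finite-type map restricted to $X'$, or equivalently that a dominant finite-type morphism to an irreducible (or local irreducible) base has image containing every point whose... This is where going-down-type subtleties enter.

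The main obstacle is precisely the last step: passing from "$X' \to Y$ is dominant" to "$X' \to Y$ is surjective," which is false for a general dominant morphism and must genuinely use the $h$-covering hypothesis (through base change to local rings or to all residue fields) rather than any formal property of $X'$. I would handle it by the local-ring base change just described, proving that for each $y \in Y$ some point of $X$ over $y$ lies on a component of $X$ that dominates $Y$, equivalently lies in $X'$; the input is that $X \times_Y \Spec\mathscr{O}_{Y,y} \to \Spec \mathscr{O}_{Y,y}$ is surjective (an $h$-covering) together with the finite-type hypothesis guaranteeing that the dominating components of this local pullback are exactly the traces of the dominating components of $X$. Everything else — the reduction to $Y$ irreducible, the identification of $X'$ with the union of components meeting the generic fiber, and the use of stability of $h$-coverings under base change (Lemma on base change already recalled) — is routine.
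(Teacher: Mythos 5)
The paper itself gives no proof of this proposition; it is stated with a citation to Voevodsky \cite[Proposition 3.1.3]{MR1403354}. So the relevant question is whether your argument actually closes. It does not, and the gap is exactly where you suspect: your argument only ever invokes that an $h$-covering is \emph{universally surjective} (you base change to $\Spec k(y)$, to $\Spec \mathscr{O}_{Y,y}$, and use that the resulting map is surjective), but universal surjectivity is strictly weaker than the $h$-covering condition and is not enough to prove the statement. The finite-type, universally surjective morphism
\[
\bigl(\mathbb{A}^1 \setminus \{0\}\bigr)\ \sqcup\ \{0\}\ \longrightarrow\ \mathbb{A}^1
\]
has $X'$ equal to the dominating component $\mathbb{A}^1\setminus\{0\}$, whose image misses the origin. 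The specific assertions you lean on in the last paragraph are false for this reason: it is not true that the only components of $X\times_Y \Spec\mathscr{O}_{Y,y}$ hitting the closed point are those dominating $\Spec\mathscr{O}_{Y,y}$, and specializations do not lift along a general dominant finite-type morphism, so ``dominant'' cannot be upgraded to ``surjective'' by anything you have used so far.

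What you are missing is any use of the \emph{topological quotient} part of the definition, which is precisely what kills the example above. One way to finish along your lines: having reduced to $Y$ irreducible with generic point $\eta$, fix $y\in Y$ and choose a discrete valuation ring $R$ with a morphism $\Spec R\to Y$ sending the generic point $\xi$ to $\eta$ and the closed point $s$ to $y$ (possible since $Y$ is noetherian and $y$ specializes $\eta$). Base change to get an $h$-covering $p_R\colon X_R\to\Spec R$ with $X_R$ noetherian; let $X_R'$ be the union of components of $X_R$ dominating $\Spec R$. Any such component has its generic point mapping into $X_\eta\subset X'$, hence its image in $X$ lies in the closed set $X'$; so it suffices to show $X_R'$ meets the special fiber $X_s$. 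If it did not, then $X_R'=X_\xi$ and the non-dominating components fill out $X_s$, so $X_R=X_\xi\sqcup X_s$ is a disjoint union of two closed sets, making $X_s=p_R^{-1}(\{s\})$ open; but then the quotient-map property forces $\{s\}$ to be open in $\Spec R$, which is false for a DVR. This contradiction is where the $h$-covering hypothesis actually does its work, and it is absent from your write-up. (Your reduction to $Y$ irreducible and the observation that the generic fiber of $p$ lies in $X'$ are both fine.)
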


\begin{defn}
  The $h$-topology is the topology on $\Sch$ induced from the
  pretopology given by finite families $\set{U_i \to X}$ where
  $\coprod U_i \to X$ is an $h$-covering.  We denote the site of
  $\C$-schemes with the $h$-topology $\Sch_h$.  $\Sm$ inherits a
  topology from $\Sch_h$ as in \cite[ex III 3.1]{MR0354652}; by
  resolution of singularities (\cite{MR0199184,MR1423020}) this
  topology is just given by restricting covering sieves of $\Sch_h$ to
  $\Sm$; we denote this site $\Sm_h$.
\end{defn}

\begin{rem}
  Note that, by resolution of singularities
  (\cite{MR0199184,MR1423020}), $\Sm_h \subset \Sch_h$ satisfy the
  conditions of Proposition \ref{p:frep}, so finitely representable
  hypercovers compute sheaf cohomology in $\Sm_h$.
\end{rem}

\begin{thm}[{\cite[ex III Theorem 4.1]{MR0354652}}]
  Let $C, C'$ be small categories, $u:C \to C'$ a fully faithful
  functor.  Suppose $C'$ has a Grothendieck topology, and let $C$ have
  the induced topology.  If every object of $C'$ can be covered by an
  object of $C$, then the functor $F \mapsto F \circ u$ is an
  equivalence of the category of sheaves on $C'$ with the category of
  sheaves on $C$.
\end{thm}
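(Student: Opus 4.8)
The plan is to construct an explicit quasi-inverse to $u^{*}:=(\blank)\circ u:\Sh C'\to\Sh C$ out of Kan extension, and then verify that the unit and counit of the resulting adjunction are isomorphisms. On presheaf categories, $u^{*}:\Pre C'\to\Pre C$ has a right adjoint $u_{*}$, the right Kan extension along $u$, with $u_{*}G(Y)=\Hom_{\Pre C}(u^{*}h_{Y},G)$ for $Y\in C'$ and $h_{Y}=\Hom_{C'}(\blank,Y)$. Full faithfulness of $u$ gives $u^{*}h_{u(X)}=h_{X}$ for $X\in C$, and hence the counit $u^{*}u_{*}\to\id_{\Pre C}$ is an isomorphism; equivalently $u^{*}u_{*}G=G$ for every presheaf $G$ on $C$.

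The first thing to check is that $u^{*}$ and $u_{*}$ send sheaves to sheaves. For $u^{*}$ this is just continuity of $u$ for the induced topology, and it is immediate from the definition: a sieve $R$ on $X\in C$ covers precisely when the sieve $\bar R$ it generates on $u(X)$ covers in $C'$, and the matching-family condition for $u^{*}F$ along $R$ is, by full faithfulness of $u$, exactly the matching-family condition for $F$ along $\bar R$. For $u_{*}$ one must use the hypothesis that every object of $C'$ is covered by objects of $C$. Given $G\in\Sh C$ and a covering sieve of $Y\in C'$, refine it to a covering family $\{u(X_{i})\to Y\}_{i}$ with $X_{i}\in C$; the overlaps $u(X_{i})\times_{Y}u(X_{j})$, and the iterated fibre products beyond them, need not lie in the image of $u$, but each of them is again covered by objects $u(Z)$ of $C$. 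Feeding these covers into the description of $u_{*}G$ on $Y$ rewrites the descent datum entirely in terms of values of $G$ on objects of $C$, where it collapses to the sheaf axiom for $G$; independence of the chosen refinements follows since any two admit a common refinement and $G$ is separated. I expect this to be the main obstacle: $C$ carries no fibre-product structure, so one has to be disciplined about systematically replacing each fibre product occurring in a descent datum by a cover drawn from $C$ and re-checking the cocycle conditions with the larger (redundant) index set.

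With both preservation statements in hand, $(u^{*},u_{*})$ restricts to an adjunction $\Sh C'\rightleftarrows\Sh C$; since $u_{*}G$ is already a sheaf when $G$ is, the sheaf-level counit is the presheaf-level one, still an isomorphism. It remains to see that the unit $\eta_{F}:F\to u_{*}u^{*}F$ is an isomorphism for every $F\in\Sh C'$. Evaluated at $Y\in C'$, $\eta_{F}$ is the map $F(Y)\to\Hom_{\Pre C}(u^{*}h_{Y},u^{*}F)$. Picking a cover $\{u(X_{i})\to Y\}$ with $X_{i}\in C$, a morphism $u^{*}h_{Y}\to u^{*}F$ restricts to elements $s_{i}\in F(u(X_{i}))$, and naturality together with the fact that the overlaps $u(X_{i})\times_{Y}u(X_{j})$ are covered by objects of $C$ shows that $(s_{i})$ is a matching family, hence glues to a unique $s\in F(Y)$; applying the covering hypothesis once more, to resolve $u(X)\times_{Y}u(X_{i})$ for an arbitrary $X\in C$ over $Y$, and using that $F$ is separated shows that $s$ recovers the original morphism. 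Thus $\eta_{F}$ is bijective, both unit and counit are isomorphisms, $(u^{*},u_{*})$ is an adjoint equivalence, and $F\mapsto F\circ u$ is an equivalence $\Sh C'\xrightarrow{\ \sim\ }\Sh C$ with quasi-inverse $u_{*}$. Conceptually the two invocations of the covering hypothesis say that $u(C)\subset C'$ is dense, so the whole argument is an instance of the comparison lemma for sites.
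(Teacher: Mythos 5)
The paper does not prove this theorem at all: it is quoted verbatim with a citation to SGA~4, Expos\'e~III, Th\'eor\`eme~4.1 (the ``comparison lemma'' for sites), so there is no in-paper proof against which to compare your argument. What you have written is the standard route to that result---restriction $u^{*}$, right Kan extension $u_{*}$, preservation of sheaves in both directions, and then unit and counit are shown to be isomorphisms using density of $u(C)$ in $C'$---and the global structure is sound.

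One step, however, is stated more strongly than you can justify. When you argue that $u^{*}$ preserves sheaves, you claim that by full faithfulness the matching-family condition for $u^{*}F$ along a covering sieve $R$ on $X\in C$ ``is exactly'' the matching-family condition for $F$ along the generated sieve $\bar R$ on $u(X)$. That is not true by full faithfulness alone: a matching family for $F$ along $\bar R$ assigns data to \emph{every} $W\to u(X)$ in $\bar R$, most of which are not in the image of $u$, whereas a matching family for $u^{*}F$ along $R$ only assigns data to $u(V)\to u(X)$ with $V\to X\in R$. Passing from the latter to the former already requires the covering hypothesis: given $W\to u(V)\to u(X)$ and a second factorization $W\to u(V')\to u(X)$, one covers $W$ by objects $u(Z)$ with $Z\in C$, checks that the two candidate restrictions $s_{V}|_{W}$ and $s_{V'}|_{W}$ agree after pullback to each $u(Z)$ (here full faithfulness finally enters, to turn $u(Z)\to u(V)$ into $Z\to V$), and invokes separatedness of $F$ to conclude that they agree on $W$. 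In other words, full faithfulness plus the density hypothesis are both needed even here; indeed the very characterization of the induced topology you are using ($R$ covers iff $\bar R$ covers, equivalently covering sieves on $C$ are restrictions of covering sieves on $C'$) is itself a consequence of the covering hypothesis, as the paper notes when it justifies the description of $\Sm_{h}\subset\Sch_{h}$ via resolution of singularities. The rest of your sketch---the treatment of $u_{*}$ on sheaves, the counit isomorphism from $u^{*}h_{u(X)}\cong h_{X}$, and the bijectivity of the unit---is correct in spirit and matches the usual proof once this point is repaired.
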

 
\begin{cor}\label{c:hequiv}
 The category of sheaves on $\Sm_h$ is equivalent via the natural
 embedding to the category of sheaves on $\Sch_h$.
\end{cor}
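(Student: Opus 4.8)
The plan is to deduce Corollary~\ref{c:hequiv} directly from the preceding comparison theorem of SGA4 (\cite[ex III Theorem 4.1]{MR0354652}), by checking that the hypotheses are met for the fully faithful inclusion $u:\Sm_h \hookrightarrow \Sch_h$. Two of the three hypotheses are immediate: the inclusion is fully faithful since $\Sm$ is defined as a \emph{full} subcategory of $\Sch$, and $\Sm_h$ carries by definition the topology induced from $\Sch_h$ (this is exactly how the site $\Sm_h$ was set up). So the only point requiring argument is the covering condition: every object $Y \in \Sch_h$, i.e.\ every separated finite-type $\C$-scheme, must admit an $h$-covering by a smooth $\C$-scheme.

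First I would reduce to the reduced scheme $Y_{\red}$, since the canonical closed immersion $Y_{\red} \to Y$ is a universal homeomorphism, hence an $h$-covering; so it suffices to cover $Y_{\red}$. Next, using that $Y_{\red}$ is a finite-type $\C$-scheme, I would invoke resolution of singularities (\cite{MR0199184,MR1423020}): decompose $Y_{\red}$ into its finitely many irreducible components $Y_1,\dots,Y_n$ (each with reduced structure), take a resolution $\tilde{Y_i} \to Y_i$ for each — a proper birational morphism with $\tilde{Y_i}$ smooth — and form $\tilde Y := \coprod_i \tilde{Y_i} \to Y_{\red}$. Each $\tilde{Y_i} \to Y_i \hookrightarrow Y_{\red}$ is proper, the total map is surjective on points, and a proper surjective morphism is a universal topological epimorphism (properness is stable under base change and a proper surjection of schemes is a topological quotient map); hence $\tilde Y \to Y_{\red}$ is an $h$-covering. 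Composing, $\tilde Y \to Y_{\red} \to Y$ is an $h$-covering with $\tilde Y \in \Sm$, which is the required covering. Finally, a minor bookkeeping point: the comparison theorem as quoted is stated for \emph{small} categories, whereas $\Sch$ and $\Sm$ are only essentially small; I would note that one may replace them by small skeleta (or by the full subcategory of schemes of bounded cardinality relative to $Y$), which does not affect the categories of sheaves.

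With the covering condition verified, the SGA4 theorem gives that restriction along $u$, i.e.\ $F \mapsto F\circ u$, is an equivalence from sheaves on $\Sch_h$ to sheaves on $\Sm_h$, which is precisely the assertion; the inverse equivalence is left Kan extension along $u$ followed by sheafification, as usual.

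The main obstacle — really the only substantive step — is establishing that every finite-type $\C$-scheme is $h$-covered by a smooth one, and this hinges entirely on resolution of singularities together with the fact that proper surjective morphisms are $h$-coverings. Everything else is formal. It is worth remarking that this is exactly the input that makes the whole approach of the paper work, and it is why one restricts to characteristic zero (indeed to $\C$); the same reduction also underlies the remark that $\Sm_h \subset \Sch_h$ satisfies the hypotheses of Proposition~\ref{p:frep}.
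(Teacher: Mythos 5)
Your proof is correct and takes essentially the same route as the paper: the paper's proof is simply the one-line observation that resolution of singularities produces a smooth $h$-covering of any $\C$-scheme, and then one applies the quoted SGA4 comparison theorem. Your version fills in the intermediate steps (reduction to $Y_{\red}$ via a universal homeomorphism, resolving irreducible components, observing proper surjections are $h$-coverings) and adds a remark about small skeleta, but these are elaborations of the same argument rather than a different one.
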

\begin{proof}
  Resolution of singularities (\cite{MR0199184,MR1423020}) gives
  smooth $h$-coverings of arbitrary $\C$-schemes.
\end{proof}

\subsection{Cohomological Descent}

\begin{defn}[Cohomological Descent]
  An augmented simplicial $\C$-scheme $e:K_\dt \to X$ is a
  \emph{cohomological descent resolution} if the adjunction \[\id_{an}
  \to Re_{an*}e^*_{an}\] is an isomorphism; here we use the analytic
  topology.  According to \cite[ex XVI 4.1]{MR0354654}, if one
  restricts to rational vector spaces, this is the same as requiring
  $\Q_{l,X} \simeq Re_*(\Q_{l, K_\dt})$ in the \'{e}tale topology.
  The morphism $e$ is a \emph{universal cohomological descent
    resolution} (or a $ucd$-resolution) if it is a cohomological
  descent resolution after any base change.

  A morphism of $\C$-schemes $Y \to X$ is of \emph{cohomological
    descent} if $\cosk_0(Y/X) \to X$ is a cohomological descent
  resolution (where $\cosk_0(Y/X)$ is the coskeleton functor in the
  category of schemes over $X$.)  A morphism $Y \to X$ is
  \emph{universally of cohomological descent} (or a $ucd$-cover) if
  every base change is of cohomological descent.
\end{defn}

Some basic results:

\begin{lem}[{\cite[5.3.5]{MR0498552}}]
  A morphism with a local section is a $ucd$-covering.  A proper
  surjection is a $ucd$-covering.
\end{lem}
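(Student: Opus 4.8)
The statement is classical (this is essentially \cite[5.3.5]{MR0498552}); I sketch how I would reconstruct it from a single mechanism, extra degeneracies, together with proper base change. By the definition of cohomological descent given above, after reduction to rational (or torsion étale) coefficients via \cite[ex XVI 4.1]{MR0354654}, the condition that $e:K_\dt \to X$ be a cohomological descent resolution is that $\id \to Re_*e^*$ be an isomorphism in $\D^+$, and being a $ucd$-resolution means this persists after every base change. Two formal facts will be used freely: the condition is local on $X$ for the étale (equivalently analytic) topology, and it is stable under composition; moreover a universal homeomorphism — in particular a purely inseparable morphism — is a $ucd$-cover, since it induces an equivalence of étale topoi.

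\emph{Morphisms with a local section.} Suppose first $f:Y \to X$ has a global section $s$. Writing $K_\dt = \cosk_0(Y/X)$, so $K_n = Y\times_X \cdots \times_X Y$ ($n+1$ factors), the section supplies an extra degeneracy $s_{-1}:K_n \to K_{n+1}$, $(y_0,\dots,y_n)\mapsto(s(x),y_0,\dots,y_n)$ where $x$ is the common image in $X$. A simplicial object equipped with an extra degeneracy has augmentation a simplicial homotopy equivalence; applying $Re_*$ to an injective resolution and using that chain-homotopic maps induce the same map in the derived category, $\id \to Re_*e^*F$ is an isomorphism for all $F$. Extra degeneracies base-change along any $X' \to X$ (the section does), so a global section makes $\cosk_0(Y/X)\to X$ a $ucd$-resolution. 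Since $ucd$ is local on $X$, the same holds when $f$ merely admits a section locally, which is the assertion.

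\emph{Proper surjections.} Now let $f:Y\to X$ be proper and surjective and $e:K_\dt = \cosk_0(Y/X) \to X$. Each $K_n$ is a finite fiber product of copies of $Y$ over $X$, hence proper over $X$, so by the proper base change theorem the formation of $R(K_n \to X)_*$ commutes with any base change $X' \to X$; running this termwise through the $E_1$-spectral sequence (equivalently the $\Tot$ of the cosimplicial complex computing $Re_*e^*F$) shows the formation of $Re_*e^*F$ commutes with arbitrary base change. We may therefore take $X = \Spec k$ with $k$ separably closed and $Y$ a nonempty proper $k$-scheme. Choosing a closed point of $Y$ and giving it the reduced structure yields a closed immersion $\Spec k' \hookrightarrow Y$ with $k'/k$ finite, hence purely inseparable; the composite $\Spec k' \to Y \to \Spec k$ is a $ucd$-cover. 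Comparing the coskeleta $\cosk_0(\Spec k'/\Spec k) \to \cosk_0(Y/\Spec k) \to \Spec k$, one deduces from "$g\circ f$ a $ucd$-cover $\Rightarrow$ $g$ a $ucd$-cover" (a standard functoriality of cohomological descent, cf. \cite{MR0498552}) that $Y \to \Spec k$ is of universal cohomological descent. Undoing the base-change reduction gives the general case.

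The steps that genuinely rely on the literature, and which I would expect to be the only real obstacle, are the passage of proper base change through the simplicial diagram and the descent lemma "$gf$ a $ucd$-cover implies $g$ is one"; the remainder is the extra-degeneracy formalism and the observation that $ucd$ is local on the base.
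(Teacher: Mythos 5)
The paper offers no proof of this lemma; it is stated as a citation to Deligne's Hodge III, 5.3.5 (the paper's Lemma~\ref{l:ucdcomp}, the ``2 out of 3'' statement you invoke, is likewise only cited, to Conrad's notes). Your reconstruction is correct and follows exactly the classical route: extra degeneracies for morphisms with a section plus locality on the base, and proper base change plus reduction to a geometric point plus the right-cancellation property of $ucd$-covers for proper surjections. Two small observations. First, your argument genuinely depends on Lemma~\ref{l:ucdcomp}(2), which is stated later in the paper; this is not circular, since the proof of that lemma (a Mayer--Vietoris/bisimplicial argument, cf.\ Conrad, Thm.~7.5) does not use the proper-surjection statement, but it is worth saying explicitly. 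Second, in this paper's default setting the cohomological-descent condition is taken in the analytic topology over $\C$ (rational coefficients), so proper base change holds without torsion hypotheses, residue fields at closed points are $\C$, and the purely-inseparable step in your argument becomes vacuous: one simply picks a $\C$-point of the nonempty fiber. Your formulation has the virtue of working verbatim in the \'etale/$\ell$-adic setting as well, which is where it is needed if one uses the $\Q_\ell$ characterization of descent from SGA~4, Exp.~XVI.
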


\begin{lem}[{\cite[5.3.5]{MR0498552}}]\label{l:ucdcomp}
  \begin{enumerate}
  \item The composition of $ucd$-coverings is a $ucd$-covering.
  \item If the composition $X \to Y \overset{f}{\to} Z$ is a
    $ucd$-covering, then $f$ is a $ucd$-covering.
  \end{enumerate}
\end{lem}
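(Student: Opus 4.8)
The plan is to reduce both statements to the general stability formalism for cohomological descent of (bi)simplicial schemes due to Saint-Donat (\cite[ex Vbis]{MR0354653}; see also \cite[5.3.5]{MR0498552}), the only substantive ingredient being a single ``descent of descent'' assertion which I isolate below. First recall that, by construction, a morphism $c\colon X\to Z$ is a $ucd$-cover precisely when the augmentation $\cosk_0(X/Z)\to Z$ is of cohomological descent and remains so after arbitrary base change, and that cohomological descent may be tested in the analytic topology, or with $\Q_l$-coefficients in the \'etale topology, via the adjunction $\id\to Re_*e^*$. I will use freely that $ucd$-covers are stable under base change (this is part of the definition) and that, by the preceding lemma, any morphism admitting a section --- being in particular one with a local section --- is a $ucd$-cover.

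The key input I would establish is the following local-on-the-target property: if $c\colon X\to Z$ is a $ucd$-cover and $f\colon Y\to Z$ is a morphism whose base change along $c$, the projection $X\times_Z Y\to X$, is a $ucd$-cover, then $f$ is itself a $ucd$-cover. To prove it I would form the bisimplicial scheme $W_{\dt\dt}$ with $W_{mn}=\cosk_0(Y/Z)_m\times_Z\cosk_0(X/Z)_n$, augmented to $Z$. Using compatibility of $\cosk_0$ with base change, each row $W_{m\dt}\to\cosk_0(Y/Z)_m$ is the $\cosk_0$ of the base change of $c$ along $\cosk_0(Y/Z)_m\to Z$, hence of cohomological descent; each column $W_{\dt n}\to\cosk_0(X/Z)_n$ is the $\cosk_0$ of a base change of $f$ along a scheme mapping to $X$, hence of cohomological descent since the base change of $f$ along $X\to Z$ is assumed to be a $ucd$-cover. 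By the standard comparison of the diagonal of a bisimplicial scheme with the total object of the associated bicomplex, the columns together with the fact that $\cosk_0(X/Z)\to Z$ is of cohomological descent (as $c$ is a $ucd$-cover) show that $\mathrm{diag}(W_{\dt\dt})\to Z$ is of cohomological descent, while the rows identify the cohomology of $\mathrm{diag}(W_{\dt\dt})$ over $Z$ with that of $\cosk_0(Y/Z)$; hence $\cosk_0(Y/Z)\to Z$ is of cohomological descent, and running the argument after an arbitrary base change of $f$ shows $f$ is a $ucd$-cover. This bisimplicial bookkeeping is essentially Saint-Donat's argument, and I expect it to be the one genuine obstacle --- the rest of the proof is formal.

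Granting this, part (2) follows: given $X\xrightarrow{g}Y\xrightarrow{f}Z$ with $f\circ g$ a $ucd$-cover, base change $f$ along the $ucd$-cover $f\circ g\colon X\to Z$; the resulting projection $p\colon X\times_Z Y\to X$ admits the section $x\mapsto(x,g(x))$, so $p$ is a $ucd$-cover, and the local-on-the-target property gives that $f$ is a $ucd$-cover. Part (1) then follows in the same spirit: given $ucd$-covers $a\colon X\to Y$ and $b\colon Y\to Z$, base change $b\circ a\colon X\to Z$ along the $ucd$-cover $b$; the resulting projection $q\colon X\times_Z Y\to Y$ (fibre product formed via $b\circ a$ and $b$) satisfies $q\circ\sigma=a$ for $\sigma\colon X\to X\times_Z Y$, $x\mapsto(x,a(x))$, so by part (2) applied to $X\xrightarrow{\sigma}X\times_Z Y\xrightarrow{q}Y$ the map $q$ is a $ucd$-cover; since $q$ is the base change of $b\circ a$ along the $ucd$-cover $b$, the local-on-the-target property gives that $b\circ a$ is a $ucd$-cover.
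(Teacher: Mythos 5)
The paper gives no argument of its own for this lemma; it cites \cite[Theorem 7.5]{conradCD}, and your proof is essentially the argument that appears there (and originally in SGA 4, Exp.\ Vbis, and Hodge III \S 5.3): reduce everything to a single ``locality on the base'' statement, which is established by the bisimplicial trick of playing the rows $\cosk_0$ of $c$ against the columns $\cosk_0$ of $f$, using that cohomological descent for the diagonal can be tested by either filtration. Your derivation of (2) from the section $x\mapsto(x,g(x))$ and of (1) from (2) via the section $x\mapsto(x,a(x))$ is also the standard bookkeeping, so this correctly fills in what the paper defers to the reference.
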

\begin{proof}
  See \cite[Theorem 7.5]{conradCD} for a proof.
\end{proof}

According to \cite[5.3.5]{MR0498552} $ucd$-coverings form a
pretopology on $\Sch$.  We deviate from Deligne, however, in taking
the pretopology generated by only finite families $\set{U_i \to X}$
where $\coprod U_i \to X$ is a $ucd$-covering.  (Deligne and Du Bois
in practice use only representable simplicial objects so there is no
difference.)  We denote the topology generated by this pretopology the
universal cohomological descent topology, or the $ucd$-topology.

Let $\Sch_{ucd}$ be the category of $\C$-schemes with the
$ucd$-topology.  Since resolution of singularities are
$ucd$-coverings, by the exact same argument as for the $h$-topology,
the induced topology on $\Sm$ (denoted $\Sm_{ucd}$) is given by
restricting the covering sieves of $\Sch_{ucd}$, and the categories of
sheaves on $\Sch_{ucd}$ and $\Sm_{ucd}$ are equivalent.

\begin{rem}
  Again $\Sch_{ucd}$ and $\Sm_{ucd}$ satisfy the conditions of
  Proposition \ref{p:frep}, so finitely representable hypercovers
  compute sheaf cohomology in $\Sm_{ucd}$.
\end{rem}

The basic, almost circular theorem is

\begin{thm}[{\cite[5.3.5]{MR0498552}}]
  Let $e:K_\dt \to X$ be a hypercover in the topology of universal
  cohomological descent.  Then $f$ is a universal cohomological
  descent resolution.
\end{thm}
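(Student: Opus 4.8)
The plan is to follow Deligne's argument \cite[5.3.5]{MR0498552}, whose shape is as follows. First I would reduce to a non-universal statement: since the coskeleton functor in $\Sch/X$ commutes with arbitrary base change and $ucd$-coverings are stable under base change, the pullback of a $ucd$-hypercover $K_\dt \to X$ along any $X' \to X$ is again a $ucd$-hypercover over $X'$ (in degree $0$ one base-changes the $ucd$-cover $K_0 \to X$, and in degree $n$ one uses that $(\cosk_{n-1}K)_n$ commutes with base change, so that $K_n \to (\cosk_{n-1}K)_n$ stays a $ucd$-cover). Hence it suffices to show that every $ucd$-hypercover $e \colon K_\dt \to X$ is a cohomological descent resolution, i.e. that the unit $\id \to Re_* e^*$ is an isomorphism on the relevant derived category (analytic, or \'etale with $\Q$-coefficients).

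Next I would invoke the cohomological descent formalism of \cite[ex Vbis]{MR0354653}. The key input is the criterion reducing cohomological descent to its truncated versions: $e \colon K_\dt \to X$ is of cohomological descent provided that for every $n \ge 0$ the augmented simplicial scheme $(\cosk_n \sk_n K)_\dt \to X$ is of cohomological descent, the point being that the skeletal filtration of $K_\dt$ produces a spectral sequence, convergent in each cohomological degree, expressing $Re_* e^* \cF$ in terms of the truncated pieces. Thus it is enough to prove, by induction on $n$, that $(\cosk_n \sk_n K)_\dt \to X$ is of cohomological descent.

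For the base case $n = 0$ one has $(\cosk_0 \sk_0 K)_\dt = \cosk_0(K_0/X)$, and $K_0 \to X$ is a $ucd$-covering, so $\cosk_0(K_0/X) \to X$ is a cohomological descent resolution by the very definition of a $ucd$-covering. For the inductive step I would compare $(\cosk_n \sk_n K)_\dt$ with $(\cosk_{n-1} \sk_{n-1} K)_\dt$ over $X$: the hypercover hypothesis is exactly that $K_n \to (\cosk_{n-1}K)_n$ is a $ucd$-covering, and using this one exhibits $\cosk_n \sk_n K$ as obtained from $\cosk_{n-1}\sk_{n-1}K$ by attaching, in the simplicial degrees $\ge n$, faces of the $0$-coskeleton of a $ucd$-covering. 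Since the $0$-coskeleton of a $ucd$-covering is of cohomological descent, and cohomological descent is stable under the diagonal of the bisimplicial object that this attachment produces — again part of the formalism of \cite[ex Vbis]{MR0354653}, treated cleanly in \cite{conradCD} — combining this with the inductive hypothesis and Lemma \ref{l:ucdcomp} gives that $(\cosk_n \sk_n K)_\dt \to X$ is of cohomological descent, completing the induction.

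The main obstacle is precisely this last step: spelling out the bisimplicial presentation of $\cosk_n \sk_n K$ relative to $\cosk_{n-1}\sk_{n-1} K$ and quoting the stability of cohomological descent under the associated diagonal in exactly the form required. In fact the cleanest organization is to observe that the statement is an instance of the general theorem that hypercoverings relative to \emph{any} class of ``coverings'' which is stable under base change, closed under composition, contains every morphism admitting a local section, and has the cancellation property of Lemma \ref{l:ucdcomp}(2), are universally of cohomological descent; the hypotheses of that theorem are exactly the lemmas on $ucd$-coverings recalled just above, so the remaining work is to cite the formalism correctly.
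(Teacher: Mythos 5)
The paper does not prove this statement: it is quoted directly from Deligne (\cite[5.3.5]{MR0498552}) and labelled ``basic, almost circular,'' so there is no in-paper argument to compare your proposal against. Your job, then, should really be judged as a reconstruction of the literature proof rather than of anything in this paper.

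As such a reconstruction your sketch has the right shape. The reduction of the universal statement to the plain statement is correct and for the right reason (relative coskeleta are finite limits over $X$ and therefore commute with base change along $X' \to X$, and $ucd$-coverings are by definition stable under base change). The reduction to finite truncations via the skeletal filtration and its spectral sequence is also the standard move (SGA~4 Vbis 5.2.2), and the base case $\cosk_0(K_0/X) \to X$ really is immediate from the definition of a $ucd$-covering. The genuine mathematical content sits exactly where you flag it: the inductive step comparing $\cosk_n \sk_n K$ to $\cosk_{n-1}\sk_{n-1}K$. Your description of ``attaching faces of the $0$-coskeleton of a $ucd$-covering'' is evocative but not a proof; what the literature actually does there is a nontrivial bisimplicial d\'evissage (Conrad's Theorem~7.10 and its preparatory lemmas on stability of cohomological descent under bisimplicial diagonals). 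Your closing observation --- that the whole thing is an instance of a general theorem about hypercovers for any class of morphisms that is stable under base change, closed under composition, contains morphisms with local sections, and enjoys the cancellation of Lemma~\ref{l:ucdcomp}(2) --- is exactly the right framing and matches how \cite{conradCD} packages the result. So the proposal is correct in outline and honest about the remaining gap; it neither agrees nor disagrees with a proof in the paper, because the paper offers none.
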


\begin{rem}
  Note that both the $h$-topology and the $ucd$-topology refer to an
  underlying topology: the $h$-topology refers to the Zariski
  topology, and the $ucd$-topology refers to the \'{e}tale or analytic
  topologies.
\end{rem}

\subsection{Comparison of the $h$- and $ucd$-topologies} 

\begin{lem}[{\cite[Theorem 3.1.9]{MR1403354}}]
  An $h$-covering $Y \to X$ of an excellent reduced noetherian scheme
  $X$ can be refined $Y' \to Y \to X$ to an $h$-covering of normal
  form: $Y' \to X$ factors as $s \circ f \circ i$ where $i$ is an open
  covering, $f$ is a finite surjective morphism, and $s$ is a blowup
  of a closed subscheme.
\end{lem}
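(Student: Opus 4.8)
The plan is to strip $Y\to X$ down, by successive reductions, to a very concrete shape, and then to assemble a normal-form covering by hand; the main tools are generic flatness, the Raynaud--Gruson flattening theorem, Zariski's Main Theorem, and the fact that a composition of blow-ups is again a blow-up. A universal homeomorphism is an $h$-covering, so we may replace $Y$ by $Y_{\mathrm{red}}$; by the proposition quoted above we may then delete the components of $Y$ not dominating a component of $X$ without losing surjectivity onto $X$. To reduce to $X$ integral, write $X=\bigcup_i X_i$ as the union of its irreducible components with their reduced structure; then $\coprod_i X_i\to X$ is finite and surjective, hence an $h$-covering, so it is enough to put each $Y\times_X X_i\to X_i$ in normal form over the integral scheme $X_i$ and reassemble, using that a disjoint union of normal forms is a normal form, that a composition of blow-ups is a blow-up, and a purely formal (if fiddly) bookkeeping argument to move the blow-up past the finite surjective map $\coprod_i X_i\to X$.

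Assume now $X$ integral. Generic flatness gives a dense open $U\subseteq X$ over which $Y\to X$ is flat, and the Raynaud--Gruson theorem (``platification par \'eclatement'') produces a $U$-admissible blow-up $s\colon X'\to X$ whose strict transform $Y'\subseteq Y\times_X X'$ is flat of finite type over $X'$. The base change $Y\times_X X'\to X'$ is again an $h$-covering, so the proposition quoted above, applied over the integral scheme $X'$, shows that its union of dominating components --- which is exactly $Y'$ --- surjects onto $X'$; hence $Y'\to X'$ is flat, of finite type, and surjective, and it refines $Y\to X$. Since a blow-up on the outside of a normal form is absorbed by a single blow-up, we may replace $X,Y$ by $X',Y'$ and assume henceforth that $Y\to X$ is flat, of finite type and surjective, with $X$ integral. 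It then suffices to refine such a $Y\to X$ by an (open covering)$\,\circ\,$(finite surjective)$\,\circ\,$(blow-up).

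This is proved by Noetherian induction on $X$. Over the generic point $Y_\eta$ is non-empty and of finite type over $\kappa(X)$; the closure in $Y$ of a closed point of $Y_\eta$ is an integral subscheme $T\subseteq Y$ dominating $X$ and generically finite over $X$, hence finite and surjective over some dense open $U_1\subseteq X$. Flattening the closure $\overline T\subseteq Y$ over $X$ by a further blow-up of $X$ makes its strict transform flat and, being equidimensional over the base, quasi-finite; Zariski's Main Theorem then exhibits this strict transform as an open subscheme of a scheme $F$ that is finite and surjective over the blown-up $X$, and $F$ is the finite surjective middle term of the desired tower over the dense region, with $T$ contributing an open piece of the overlying open covering that maps to $Y$. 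For the complement $C=X\setminus U_1$, a proper closed subset of smaller dimension, the inductive hypothesis gives a normal-form refinement of $Y\times_X C\to C$; one then blows $X$ up along $C$, so that $C$ becomes an effective Cartier divisor, and checks that the covering over the dense region and the covering over the exceptional divisor fit into a single tower (open covering)$\,\circ\,$(finite surjective)$\,\circ\,$(blow-up) over $X$.

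The step I expect to be the main obstacle is precisely this last gluing: turning a covering that is ``finite over a dense open of $X$'' (or ``finite over an open of $F$'') into an honest (open covering)$\,\circ\,$(finite surjective) covering of a suitable blow-up of $X$, fitted together with the inductive normal-form data over the complementary closed subscheme. All the remaining ingredients --- generic flatness, the behaviour of blow-ups under open immersions and disjoint unions, and Zariski's Main Theorem --- are standard; the care needed to make the ``finite-over-an-open'' piece and the ``normal-form-over-a-closed'' piece match, via scheme-theoretic closures, strict transforms and the cancellation criterion for properness, is where the real content sits.
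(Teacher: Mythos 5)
The paper does not prove this lemma at all: it is cited verbatim as Voevodsky's Theorem 3.1.9 from \emph{Homology of Schemes}, and no argument is given in the text. So there is no ``paper's proof'' to compare against; what you have written is an attempted reconstruction of Voevodsky's argument. Your choice of tools --- Raynaud--Gruson platification, generic flatness, Zariski's Main Theorem, and composability of blow-ups --- is the right toolbox, and in that sense the outline is on track.

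However, the sketch has a genuine gap, and you identify it yourself: the gluing step is ``the main obstacle,'' and you do not resolve it. This is not a peripheral detail --- it is the theorem. Concretely, after the Noetherian induction you are holding two incompatible pieces of data: over a dense open you have a quasi-finite $T$ sitting as an open subscheme inside a finite surjective $F\to X'$, with $T$ (but not all of $F$) mapping to $Y$; over the closed complement $C$ you have, by induction, an entirely separate tower (open covering)$\circ$(finite surjective)$\circ$(blow-up of $C$). The normal form demands a \emph{single} middle scheme, finite and surjective over a \emph{single} blow-up of $X$, covered by opens each of which maps to $Y$. Merging the two blow-ups into one center, merging the two finite covers into one finite surjection over the new blow-up, and then producing an open covering of the merged finite scheme whose pieces all land in $Y$ (not just the one open $T$) is where the entire construction lives, and ``scheme-theoretic closures, strict transforms and the cancellation criterion for properness'' names plausible tools without producing the object. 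A second, smaller issue of the same flavor occurs already in your reduction to $X$ integral: after treating each $X_i$ you have a tower ending in a blow-up of $\coprod X_i$ followed by the finite surjective $\coprod X_i\to X$, which is in the wrong order for the normal form, and since blow-ups do not commute with non-flat base change, replacing it by (finite surjective)$\circ$(blow-up of $X$) requires dominating the modification $\coprod X_i'\to\coprod X_i$ by the pullback of a blow-up of $X$ --- a real lemma, not formal bookkeeping. Both of these are exactly the kind of step one has to take from Voevodsky's proof rather than reinvent.
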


\begin{cor}\label{c:hfactor}
  An $h$-covering $Y \to X$ in $\Sm_h$ can be refined to $Y' \to Y \to
  X$, where $Y' \to X$ factors into $Y' \to Z \to X$, where $Y' \to Z$
  is a Zariski open cover, and $Z \to X$ is proper, and $Y'$ and $Z$
  are smooth.  Moreover, we may assume both $Y'$ and $Z$ are
  quasi-projective.
\end{cor}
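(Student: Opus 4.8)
The plan is to combine the normal-form lemma for $h$-coverings with resolution of singularities, applied one factor at a time, and then address quasi-projectivity at the end by a further refinement. First I would invoke the normal-form lemma: starting with an $h$-covering $Y \to X$ in $\Sm_h$, since $X$ is smooth hence excellent, reduced and noetherian, there is a refinement $Y'_0 \to Y \to X$ where $Y'_0 \to X$ factors as $s \circ f \circ i$ with $i$ an open covering, $f$ finite surjective, and $s$ the blowup of a closed subscheme $W \subset X$. The composite $s \circ f$ is a proper surjection, so setting $Z_0 = $ (the blowup) $\times_X$ (the source of $f$) — more precisely letting $Z_0$ be the domain of $f$, which maps properly onto $X$ via $s \circ f$ — we have $Y'_0 \to Z_0 \to X$ with $Y'_0 \to Z_0$ an open cover and $Z_0 \to X$ proper; but $Z_0$ need not be smooth.

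The next step is to smooth things out using resolution of singularities, which holds over $\C$. Apply resolution to $Z_0$ to get a proper birational $\tilde Z \to Z_0$ with $\tilde Z$ smooth; composing, $\tilde Z \to X$ is proper (composite of propers) and still surjective (composite of surjections, as resolution is surjective). Now pull back the open cover $Y'_0 \to Z_0$ along $\tilde Z \to Z_0$: since $Y'_0 \to Z_0$ is a Zariski open immersion on each piece, its base change $\tilde Y \to \tilde Z$ is again a Zariski open cover, and $\tilde Y$ is open in the smooth scheme $\tilde Z$, hence smooth. I then need to check $\tilde Y \to X$ is still an $h$-covering refining the original: it factors through $Y'_0 \to X$ after composing with $\tilde Z \to Z_0$, and a Zariski open cover composed with a proper surjection composed with an $h$-covering is an $h$-covering (proper surjections and Zariski covers are $h$-coverings, and $h$-coverings compose), so $\tilde Y \to X$ is an $h$-covering that refines $Y \to X$ via $\tilde Y \to Y'_0 \to Y$. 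Setting $Y' = \tilde Y$, $Z = \tilde Z$ gives the factorization $Y' \to Z \to X$ with all the stated smoothness and properness properties.

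For the final quasi-projectivity claim, the idea is to further refine so that $Z$ (and hence its open subsets $Y'$) becomes quasi-projective. Since $Z \to X$ is proper and $Z$ is smooth, Chow's lemma gives a projective birational $Z' \to Z$ with $Z'$ quasi-projective over $X$ (hence quasi-projective over $\C$ as $X$ is), and then resolving $Z'$ and using that a blowup of a quasi-projective scheme along a closed subscheme is again quasi-projective, we may assume $Z'$ is smooth and quasi-projective; replacing $Z$ by $Z'$ and pulling back the open cover as before (open subsets of quasi-projective smooth schemes are quasi-projective and smooth), we obtain the refinement with $Y'$ and $Z$ both quasi-projective and smooth. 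The main obstacle is bookkeeping: at each refinement step one must verify that pulling back the Zariski open cover preserves its being an open cover, that the new $Y' \to X$ genuinely refines the original $h$-covering $Y \to X$ (i.e. admits a map to $Y$ over $X$), and that the composite to $X$ remains an $h$-covering — none of these is deep, but they use in an essential way that $h$-coverings are stable under composition (Lemma \ref{l:lac}, or rather the pretopology axioms for $\Sch_h$), that proper surjections are $h$-coverings, and that resolution of singularities and Chow's lemma are available over $\C$.
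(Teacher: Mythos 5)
Your proposal follows essentially the same route as the paper's proof: the normal-form lemma (blowup $\circ$ finite surjection $\circ$ Zariski open cover), resolution of singularities of the proper piece, pullback of the Zariski open cover along the resolution, and finally Chow's lemma for quasi-projectivity with another pullback. You add one useful clarification that the paper elides --- the Chow cover $Z'$ of $Z$ need not be smooth, so a further resolution and pullback of the open cover is required to retain smoothness --- and your parenthetical ``hence quasi-projective over $\C$ as $X$ is'' is unjustified since $X$ is only assumed smooth, separated, of finite type over $\C$, though this is harmless because the relevant notion in the later application (good covers) is quasi-projectivity over $X$.
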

\begin{proof}
  $\C$-schemes are excellent.  Factor $Y \to X$ to $Y'' \to Y \to X$
  with $Y'' \to Z' \to X$ where $Y'' \to Z'$ is a Zariski open cover
  and $Z' \to X$ is proper (composition of a finite morphism and a
  blowup.)  Use resolution of singularities to get $Z \to Z' \to X$
  proper, take $Y' = Y'' \times_{Z'} Z$, which will be a Zariski open
  cover of $Z$.  

  To get the last statement, use Chow's lemma \cite[5.6.1,
  5.6.2]{MR0217084} to get $Z' \to Z$ by a projective surjective
  morphism with $Z'$ quasi-projective, and the base change $Y'' = Y'
  \times_Z Z'$ is a Zariski open cover of $Z'$.
\end{proof}

\begin{cor}
  An $h$-covering in $\Sch$ or $\Sm$ is a $ucd$-covering.
\end{cor}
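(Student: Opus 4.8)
The plan is to show that every $h$-covering in $\Sch$ (hence in $\Sm$) already appears, after a refinement, in the pretopology generating the $ucd$-topology; since the $ucd$-topology is the one generated by finite $ucd$-covering families, this suffices. By Corollary \ref{c:hfactor} any $h$-covering $Y \to X$ in $\Sm$ can be refined to $Y' \to Z \to X$ with $Y' \to Z$ a Zariski open cover and $Z \to X$ proper (and both smooth). It is enough to check that each of these two pieces is a $ucd$-covering, because Lemma \ref{l:ucdcomp}(1) then shows the composite $Y' \to X$ is a $ucd$-covering, and a refinement of an $h$-covering by a $ucd$-covering certainly refines it to a $ucd$-cover. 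For the general statement about $\Sch$ rather than $\Sm$: an arbitrary $h$-covering $Y \to X$ in $\Sch$ factors, after refinement (using the normal-form lemma \cite[Theorem 3.1.9]{MR1403354} and then discarding the smoothness), as a Zariski open cover followed by a finite surjection followed by a blowup, so the same two-step analysis applies.

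First I would handle the proper surjective piece $Z \to X$. This is immediate from the first basic lemma quoted above (\cite[5.3.5]{MR0498552}): a proper surjection is a $ucd$-covering. (In the $\Sch$ case one uses this twice: a finite surjection is proper and surjective, and a blowup of a closed subscheme is proper and surjective onto $X$, so each is a $ucd$-covering, and their composite is one by Lemma \ref{l:ucdcomp}(1).) Second, I would handle the Zariski open cover $Y' \to Z$. Recall $Y' \to Z$ means a surjection $\coprod U_i \to Z$ with the $U_i$ a finite open cover; such a morphism admits local sections (Zariski-locally on $Z$ the inclusion of one of the $U_i$ splits the projection), so again by \cite[5.3.5]{MR0498552} it is a $ucd$-covering. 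Combining, $Y' \to X$ is a composite of $ucd$-coverings, hence a $ucd$-covering by Lemma \ref{l:ucdcomp}(1); since it refines the given $h$-covering, the sieve generated by $Y \to X$ contains a $ucd$-covering sieve and is therefore a $ucd$-covering sieve.

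I do not anticipate a serious obstacle here: the content has already been extracted into Corollary \ref{c:hfactor} (the factorization into a Zariski cover and a proper surjection) and into the two basic lemmas on $ucd$-coverings (local sections and proper surjections give $ucd$-covers; $ucd$-covers compose). The only point requiring a word of care is bookkeeping about topologies versus pretopologies: one must note that "$Y \to X$ is a $ucd$-covering" in the pretopology sense is exactly what feeds the generating pretopology of $\Sch_{ucd}$, and that refining an $h$-covering by a $ucd$-covering does show the $h$-covering sieve is a $ucd$-covering sieve — i.e. the covering sieve generated by a $ucd$-covering morphism is a member of the $ucd$-topology, and any sieve containing a covering sieve is itself covering. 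For $\Sm_h$ versus $\Sch_h$ and $\Sm_{ucd}$ versus $\Sch_{ucd}$, the induced topologies are defined by restricting covering sieves, so the comparison on $\Sch$ immediately restricts to one on $\Sm$, and the smoothness in Corollary \ref{c:hfactor} is not even needed — it is simply available. Thus the proof is short: invoke \ref{c:hfactor}, then \cite[5.3.5]{MR0498552} for each factor, then \ref{l:ucdcomp}(1).
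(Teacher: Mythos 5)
Your proof uses exactly the paper's ingredients (the normal-form lemma and Corollary \ref{c:hfactor} to refine, then the basic $ucd$-facts and composition), but the last step diverges in a way worth flagging. You conclude by passing to sieves: the refinement $Y' \to X$ is a $ucd$-covering, so the sieve generated by $Y \to X$ contains a $ucd$-covering sieve, hence is one. That establishes that $h$-covering sieves are $ucd$-covering sieves, which is all one needs for the topological comparison, but it is strictly weaker than the corollary as stated: ``$Y \to X$ is a $ucd$-covering'' is a property of the morphism itself (universal cohomological descent of $\cosk_0(Y/X)$), not of the sieve it generates. The paper instead invokes Lemma \ref{l:ucdcomp}(2): once you know the composite $Y' \to Y \to X$ is a $ucd$-covering, part (2) of that lemma lets you strip off the refinement and conclude that $Y \to X$ itself is universally of cohomological descent. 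You cite only part (1) of \ref{l:ucdcomp}; adding a single appeal to part (2) at the end would close the small gap and give the morphism-level conclusion that the corollary actually asserts.
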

\begin{proof}
  By the lemma or the corollary an $h$-covering $f:Y \to X$ in either
  $\Sch$ or $\Sm$ has a refinement $Y' \overset{g}{\to} Y
  \overset{f}{\to} X$ in either $\Sch$ or $\Sm$ where $f \circ g$
  factors into a composition of morphisms which are universally of
  cohomological descent.  Hence by Lemma \ref{l:ucdcomp} $f$ is
  universally of cohomological descent.
\end{proof}

By the above proposition, we have continuous functors $\Sch_h \to
\Sch_{ucd}$ and $\Sm_h \to \Sm_{ucd}$ (see \cite[ex III Proposition
1.6]{MR0354652}) and thus geometric morphisms of their associated
topoi of sheaves.  We do not know of an example of a $ucd$-covering
which is not an $h$-covering.

\subsection{Representable hypercovers in the $h$- or $ucd$-topologies}

\begin{lem}
  If $L_\dt$ is a finitely representable hypercover in either $\Sm_h,
  \Sch_h, \Sm_{ucd}$ or $\Sch_{ucd}$, then there is a representable
  hypercover $K_\dt$ in the same site and a morphism $L_\dt \to K_\dt$
  so that $\Z{L^\dt}$ is quasi-isomorphic to $\Z{K^\dt}$.
\end{lem}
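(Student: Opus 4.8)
The plan is to build $K_\dt$ from $L_\dt$ by replacing each finite coproduct of representables with a single representable, using the finite products and fiber products available in $\Sch$ (or $\Sm$ — here we use that resolution of singularities gives smooth models, cf. Corollary \ref{c:hfactor}). First I would recall the standard trick: if $L = \coprod_{i=1}^n h_{X_i}$, then $\coprod X_i$ is a single scheme in the site (coproducts of finitely many $\C$-schemes exist and remain separated finite type), and $h_{\coprod X_i} \cong \coprod h_{X_i} = L$, so in fact \emph{finitely} representable already means representable and the lemma is nearly a tautology at the level of objects. The only real content is to check that the simplicial structure maps and the local-acyclic-fibration (``special'') condition survive when one passes from the coproduct description to the representing schemes, and that the required morphism $L_\dt \to K_\dt$ can be taken to be an isomorphism — giving the quasi-isomorphism $\Z{L^\dt} \simeq \Z{K^\dt}$ of Proposition \ref{p:jardine} for free.

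The key steps, in order, are: (1) For each $n$, write $L_n = \coprod_{i \in I_n} h_{X_{n,i}}$ with $I_n$ finite, and set $K_n := h_{Y_n}$ where $Y_n := \coprod_{i \in I_n} X_{n,i} \in \Sch$ (resp. $\Sm$); then $K_n \cong L_n$ canonically. (2) Transport the face and degeneracy maps of $L_\dt$ across these isomorphisms to make $K_\dt$ a simplicial presheaf and $L_\dt \to K_\dt$ an isomorphism of simplicial presheaves. (3) Since isomorphisms of simplicial presheaves are local acyclic fibrations and the composite of local acyclic fibrations is one (Lemma \ref{l:lac}), $K_\dt \to e$ (or $\to X$) is again a hypercover in the same site; here I use that ``hypercover'' in the sense of the paper only requires semi-representability of source and target, which is preserved. (4) The induced map $\Z{L^\dt} \to \Z{K^\dt}$ is an isomorphism of complexes, hence a quasi-isomorphism, which is the assertion.

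The main subtlety — and the only place where genuine care is needed — is step (2) combined with the requirement that $Y_n$ actually lie in the given site: for $\Sch_h$ and $\Sch_{ucd}$ there is nothing to check since finite coproducts of $\C$-schemes are $\C$-schemes, but for $\Sm_h$ and $\Sm_{ucd}$ one must observe that a finite coproduct of smooth $\C$-schemes is again smooth, so $Y_n \in \Sm$; this is immediate. A secondary point is bookkeeping: one should confirm that the face maps $h_{Y_n} \to h_{Y_{n-1}}$ obtained by transport are induced by honest scheme morphisms $Y_n \to Y_{n-1}$ — they are, because a map of representables $h_{Y_n} \to h_{Y_{n-1}}$ is, by Yoneda, exactly a scheme morphism — and that the coskeleton/covering conditions defining ``special'' (Proposition \ref{p:lac}) are formulated in terms of covering morphisms of presheaves, which are preserved under the isomorphisms we have set up. Thus the proof is essentially the remark that ``finitely representable'' and ``representable'' coincide for simplicial objects over $\C$-schemes, once one packages finite coproducts of schemes as single schemes; no deeper input from the $h$- or $ucd$-topologies is required beyond what is already in Corollary \ref{c:hfactor} for arranging smoothness.
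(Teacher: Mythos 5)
The central step of your proposal is false: the natural map $\coprod_{i=1}^n h_{X_i} \to h_{\coprod_i X_i}$ is \emph{not} an isomorphism of presheaves. For a test scheme $T$, an element of $\bigl(\coprod_i h_{X_i}\bigr)(T)$ is a morphism $T \to X_i$ into a \emph{single} summand, whereas an element of $h_{\coprod_i X_i}(T) = \Hom(T, \coprod_i X_i)$ is a decomposition of $T$ into clopen pieces together with a morphism from each piece into its corresponding $X_i$. Whenever $T$ is disconnected these disagree. So ``finitely representable'' does not coincide with ``representable'' at the presheaf level, the lemma is not a tautology, and your claim that $L_\dt \to K_\dt$ can be taken to be an isomorphism fails. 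This also undermines step (3), where you invoke composition of local acyclic fibrations through what you believe is an iso.

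What \emph{is} true, and what the paper actually uses, is that the monomorphism $\coprod_i h_{X_i} \hookrightarrow h_{\coprod_i X_i}$ is a Zariski-local isomorphism: Zariski-locally on $T$, a map to $\coprod_i X_i$ factors through a single $X_i$ (restrict to the clopen preimages). Hence it is a covering morphism in the Zariski topology, a fortiori in the $h$- and $ucd$-topologies, so the two presheaves have isomorphic associated sheaves and therefore isomorphic sheaves of free abelian groups $\Z{-}$. That is the content that gives $\Z{L^\dt} \simeq \Z{K^\dt}$. A second point you need to be careful about, and which the paper spells out, is the transport of the simplicial structure maps: a morphism $\coprod_i h_{U_i} \to \coprod_j h_{V_j}$ of finitely representable presheaves does canonically determine, via Yoneda, a morphism $h_{\coprod U_i} \to h_{\coprod V_j}$ (but not conversely), and these agree after sheafification. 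Your ``transport across isomorphisms'' hides this; once the isomorphism is gone, this Yoneda argument is what actually produces $K_\dt$ as a simplicial presheaf and the morphism $L_\dt \to K_\dt$. Finally, to see $K_\dt$ is a hypercover one notes that $L_\dt \to K_\dt$ is a Zariski-local acyclic fibration (it locally admits sections), rather than appealing to an isomorphism.
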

\begin{proof}
  For a finite family $\set{U_i}$, 
  \[ \coprod\Hom(\blank, U_i) \to \Hom(\blank, \coprod U_i) \] is a
  Zariski cover, so in particular it is an $h$- and $ucd$-cover.  Thus
  they have the same associated sheaves of abelian groups.

  In addition, if \[ \coprod\Hom(\blank, U_i) \to \coprod\Hom(\blank,
  V_j) \] is a morphism of finitely representable presheaves, then
  Yoneda's lemma tells us the identity morphisms $\id_{U_i} \in
  \Hom(U_i, U_i)$ determine the diagonal in the commutative diagram
  \[ \xymatrix{ \coprod\Hom(\blank, U_i) \ar[r] \ar[d] &
    \coprod\Hom(\blank, V_j) \ar[d] \\ \Hom(\blank, \coprod U_i)
    \ar[ur] \ar@{-->}[r] & \Hom(\blank, \coprod V_j) } \] and thus the
  dashed arrow.  Hence every morphism of finitely representable
  presheaves determines a morphism of associated representable
  coproducts (but not vice versa!) and these morphisms are the same on
  passing to associated sheaves.

  Thus given a finitely representable hypercover $L_\dt$ with $L_n =
  \coprod\Hom(\blank, U_{n,i})$, take $K_\dt$ with $K_n = \Hom(\blank,
  \coprod U_{n,i})$ with simplicial morphisms given as above.  It is
  representable and yields the same complex of sheaves of abelian
  groups (it in fact is a local acyclic fibration in the Zariski
  topology, since it locally has sections.)
\end{proof}

\begin{cor}\label{c:rep}
  Let $X_\dt$ be a representable simplicial presheaf in $\Sch_h$.  Let
  $R_{\Sm}(X_\dt)$ be the subcategory of $FR_{\Sm}(X_\dt) = $
  \{finitely representable local acyclic fibrations in $\Sm_h$\} whose
  components are representable. Then every $L_\dt \in FR_{\Sm}(X_\dt)$
  has a quasi-isomorphism $\Z{L^\dt} \to \Z{K^\dt}$ for some $K_\dt
  \in R_{\Sm}(X_\dt)$, so for a bounded below complex of sheaves of
  abelian groups $\cF^\dt$ with the filtration b\^{e}te
  \[ \varinjlim_{K_\dt \in hR_{\Sm}(X_\dt)} H^p(\Tot\Hom(\Z{K^\dt},
  \cF^\dt)) \simeq \Ext^p(\Z{X^\dt}, \cF^\dt) \] and there is a
  filtered quasi-isomorphism of ind-objects in the derived category
  \[ \underset{K_\dt \in hR_{\Sm}(X_\dt)}{"\varinjlim"}
  \Tot\Hom(\Z{K^\dt}, \cF^\dt) \simeq R\Hom(\Z{X^\dt}, \cF^\dt). \]
\end{cor}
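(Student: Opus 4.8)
The plan is to deduce the corollary directly from the preceding Lemma together with Proposition \ref{p:frep}, by checking that the subcategory $R_{\Sm}(X_\dt)$ of genuinely representable local acyclic fibrations is, after passage to homotopy and to complexes of sheaves, cofinal in $hFR_{\Sm}(X_\dt)$. Since $X_\dt$ is representable in $\Sch_h$ and $\Sm_h \subset \Sch_h$ satisfies the hypotheses of Proposition \ref{p:frep} (resolution of singularities provides smooth $h$-coverings, and the $h$-topology is generated by a finite pretopology), Proposition \ref{p:frep} already tells us that $hFR_{\Sm}(X_\dt)$ is cofinal in $hSR_{\Sch}(X_\dt)$, hence by Theorem \ref{t:main} computes $R\Hom(\Z{X^\dt}, \cF^\dt)$ as the stated filtered ind-object. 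So it suffices to compare $R_{\Sm}(X_\dt)$ with $FR_{\Sm}(X_\dt)$.

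First I would invoke the Lemma immediately preceding the corollary: for each finitely representable local acyclic fibration $L_\dt \to X_\dt$ in $\Sm_h$, with $L_n = \coprod_i \Hom(\blank, U_{n,i})$, it produces a representable $K_\dt$ with $K_n = \Hom(\blank, \coprod_i U_{n,i})$, a morphism $L_\dt \to K_\dt$ over $X_\dt$ (here one uses that $X_\dt$ is itself representable, so the finitely-representable-to-representable reindexing is compatible with the structure maps to $X_\dt$), and a quasi-isomorphism $\Z{L^\dt} \to \Z{K^\dt}$; moreover $K_\dt \to X_\dt$ is again a local acyclic fibration (indeed locally split in the Zariski topology, as the Lemma's proof observes), so $K_\dt \in R_{\Sm}(X_\dt)$. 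Thus every object of $FR_{\Sm}(X_\dt)$ receives a map to an object of $R_{\Sm}(X_\dt)$ inducing a quasi-isomorphism on the associated complexes of sheaves of free abelian groups. This is the content of the first assertion of the corollary.

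Next I would promote this to a statement about the ind-objects. The functor $K_\dt \mapsto \Tot\Hom(\Z{K^\dt}, \cF^\dt)$ factors through the complex of sheaves $\Z{K^\dt} \in \Ch^-(\Ab(\Sh C))$, and by Proposition \ref{p:jardine} it sends any local acyclic fibration to a quasi-isomorphism downstairs, hence (applying $R\Hom(\blank, \cF^\dt)$) to a quasi-isomorphism of the $\Hom$-complexes. Therefore the reindexing map $L_\dt \leadsto K_\dt$ induces, on the level of the diagrams indexed by $hFR_{\Sm}(X_\dt)$ and $hR_{\Sm}(X_\dt)$, a cofinal comparison: both ind-systems receive compatible quasi-isomorphisms to one another, so their filtered colimits of cohomologies agree and the ind-objects are filtered-quasi-isomorphic. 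Concretely, since $hR_{\Sm}(X_\dt) \subset hFR_{\Sm}(X_\dt)$ and every object of the larger category maps to one of the smaller inducing an isomorphism on all $H^p(\Tot\Hom(\Z{\blank^\dt}, \cF^\dt))$, the inclusion is cofinal in the sense needed to identify the two colimits, and we conclude by Theorem \ref{t:main} (together with Corollary \ref{c:rhom} for the filtered statement, the filtration on each side being the b\^{e}te filtration on $\cF^\dt$).

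The main obstacle I anticipate is not any deep step but the bookkeeping around cofinality: one must check that the passage from $L_\dt$ to $K_\dt$ is sufficiently functorial (or at least functorial up to homotopy) that it defines a genuine map of ind-diagrams, and that the morphisms $L_\dt \to K_\dt$ are compatible with the maps to $X_\dt$ — this is exactly where representability of $X_\dt$ is used, via Yoneda, as in the proof of the preceding Lemma. Once that compatibility is in hand, the rest is a formal consequence of Proposition \ref{p:jardine}, Theorem \ref{t:main}, and Corollary \ref{c:rhom}.
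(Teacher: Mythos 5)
Your proposal is correct and takes essentially the same approach as the paper, which simply cites Proposition \ref{p:frep} for the passage to finitely representable hypercovers and then the preceding Lemma for the quasi-isomorphism $\Z{L^\dt} \to \Z{K^\dt}$; your elaboration on cofinality fills in details the paper leaves implicit. One small nit: you do not need Proposition \ref{p:jardine} to see that $\Z{L^\dt} \to \Z{K^\dt}$ is a quasi-isomorphism, since the Lemma asserts it directly (though the Lemma's proof does observe the map is a Zariski local acyclic fibration, so the citation is not wrong, merely redundant).
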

\begin{proof}
  Proposition \ref{p:frep} says we may compute using finitely
  representable hypercovers in $\Sm_h$.  The lemma says finitely
  representable hypercovers have associated complexes of sheaves of
  free abelian groups equivalent to those of representable
  hypercovers.
\end{proof}

\section{ALGEBRAIC DE RHAM COMPLEX}

\subsection{$\Omega^q$ is an $h$-sheaf}

For every $q \ge 0$, let $\Omega^q$ denote the presheaf on the site
$\Sm_h$ given by \[ X \mapsto \Gamma(X, \Omega^q_{X/\C}). \]  It is a
presheaf of $\cO$-modules.

\begin{lem}
  If $f:X \to Y$ is a dominant morphism of smooth $\C$-schemes, then
  $\Omega^q(Y) \hookrightarrow \Omega^q(X)$.
\end{lem}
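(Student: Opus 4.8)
The plan is to reduce the statement to the birational invariance of differential forms on smooth varieties, which is a standard consequence of the valuative criterion together with the fact that a dominant morphism of smooth varieties generically looks like a smooth morphism composed with a field extension. First I would observe that it suffices to treat the case where $X$ and $Y$ are both irreducible, since $\Omega^q(Y)$ injects into $\prod \Omega^q(Y_i)$ over the components, and dominance of $f$ means every component of $Y$ is dominated by some component of $X$; a form on $Y$ that pulls back to zero on all of $X$ must then vanish on each $Y_i$.

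Next, in the irreducible case, I would use that $Y$ is integral and smooth, hence normal, so the natural map $\Omega^q(Y) \to \Omega^q_{K(Y)/\C}$ into the module of Kähler differentials of the function field is injective: a regular differential form is determined by its generic behavior, since $\Omega^q_{Y/\C}$ is a locally free $\cO_Y$-module on the integral scheme $Y$ and $\cO_Y \hookrightarrow K(Y)$. The dominant morphism $f$ induces an inclusion of fields $K(Y) \hookrightarrow K(X)$, and since both fields have characteristic zero, this extension is separable, so the induced map $\Omega^q_{K(Y)/\C} \to \Omega^q_{K(X)/\C}$ is injective (a separable — in fact, here separably generated — field extension induces an injection on modules of differentials, as $\Omega_{K(Y)/\C} \otimes_{K(Y)} K(X) \to \Omega_{K(X)/\C}$ is split injective). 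Chasing the commutative square
\[ \xymatrix{ \Omega^q(Y) \ar[r] \ar[d] & \Omega^q(X) \ar[d] \\ \Omega^q_{K(Y)/\C} \ar[r] & \Omega^q_{K(X)/\C} } \]
the left and bottom arrows are injective, hence the top arrow $\Omega^q(Y) \to \Omega^q(X)$ is injective as well.

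The main obstacle — really the only point requiring care — is the injectivity of $\Omega^q_{K(Y)/\C} \to \Omega^q_{K(X)/\C}$ for the field extension. In characteristic zero this is automatic because every field extension is separable and the base field $\C$ is perfect, so one has a separating transcendence basis and the relevant map on $\Omega^1$ is a split injection, which passes to exterior powers; I would just cite the standard commutative-algebra fact rather than reprove it. Everything else is bookkeeping: reducing to the integral case and using local freeness of $\Omega^q_{Y/\C}$ to embed global sections into the generic fiber.
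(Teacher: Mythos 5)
Your proof is correct, and it takes a genuinely different (though closely related) route from the paper's. The paper's argument is a short geometric one: by generic smoothness in characteristic zero there is a dense open $U \subset Y$ with $V = f^{-1}(U)$ nonempty and $f|_V$ smooth; since pullback along a smooth dominant morphism is injective on differentials, a form $\omega$ with $f^*\omega = 0$ vanishes on the dense open $U$, hence on all of $Y$ by local freeness of $\Omega^q_Y$. You instead push everything to the generic point: after (correctly) reducing to $X,Y$ irreducible, you embed $\Omega^q(Y)$ into $\Omega^q_{K(Y)/\C}$ via local freeness and integrality, and then invoke the split injectivity of $\Omega^1_{K(Y)/\C}\otimes_{K(Y)} K(X) \to \Omega^1_{K(X)/\C}$ for the separably generated extension $K(Y) \hookrightarrow K(X)$, passing to exterior powers. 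These are two packagings of the same characteristic-zero fact (separability of the function-field extension is exactly what makes $f$ generically smooth), but your version replaces the generic smoothness theorem with a more elementary commutative-algebra statement about differentials of field extensions, and it makes the reduction to the irreducible case explicit where the paper leaves it implicit (and indeed uses that a smooth scheme is a disjoint union of its irreducible components). Either route is fine; the paper's is shorter, yours is more self-contained at the level of commutative algebra.
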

\begin{proof}
  Suppose $\omega \in \Omega^q(Y)$ has $f^*\omega = 0$.  Generic
  smoothness gives a Zariski open dense $U \subset Y, V = f^\inv(U)
  \subset X$ where $f|_V$ is smooth.  Then $f|_V^*$ is injective so we
  see $\omega$ vanishes on an open dense set, so must be zero.
\end{proof}

\begin{prop}
  $\Omega^q$ is a sheaf in $\Sm_h$.
\end{prop}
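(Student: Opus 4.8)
The plan is to verify the sheaf axiom for $\Omega^q$ directly against the pretopology generating $\Sm_h$, reducing to the two basic types of covers that appear in the normal-form results of Section 3. By Corollary \ref{c:hfactor}, any $h$-covering in $\Sm_h$ can be refined to a composite of a Zariski open cover $Y' \to Z$ and a proper surjection $Z \to X$ with all schemes smooth (and in fact quasi-projective). Since a presheaf that satisfies descent for a cofinal family of refinements of each cover is a sheaf, and since $\Omega^q$ is obviously a Zariski sheaf (it is a coherent sheaf on each smooth scheme, hence already satisfies Zariski descent), it suffices to check the sheaf condition for proper surjective morphisms $Z \to X$ of smooth $\C$-schemes, together with the compatibility needed to glue the Zariski and proper pieces. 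Concretely, writing $Z_\dt = \cosk_0(Z/X)$ for the \v{C}ech nerve, I must show that
\[
0 \to \Omega^q(X) \to \Omega^q(Z) \rightrightarrows \Omega^q(Z \times_X Z)
\]
is exact, i.e. that $\Omega^q(X)$ is the equalizer.

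First I would establish injectivity of $\Omega^q(X) \to \Omega^q(Z)$: since $Z \to X$ is surjective and both schemes are smooth (hence reduced and irreducible on components), the map is dominant on each component of $X$, so the previous lemma applies and gives the injection. Second, and this is the heart of the matter, I would show that a form $\omega \in \Omega^q(Z)$ with $p_1^*\omega = p_2^*\omega$ on $Z \times_X Z$ descends to $X$. The strategy is to pass to a Zariski-open dense $U \subset X$ over which $Z \to X$ becomes nicer — by generic flatness and generic smoothness one can arrange $Z_U \to U$ to be faithfully flat (even smooth after possibly shrinking or replacing $Z$ by a dense open, using that properness is not needed generically). Over such $U$, faithfully flat descent for the quasi-coherent sheaf $\Omega^q_{U/\C}$ (note $\Omega^q_{Z_U/\C}$ restricted appropriately relates to $p^*\Omega^q_{U/\C}$ via the smooth pullback sequence, and the Kähler differential sheaf satisfies fppf descent) shows $\omega|_{Z_U}$ comes from a unique $\omega_U \in \Omega^q(U)$. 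Then $\omega_U$ extends across the complement $X \setminus U$: since $X$ is smooth, $\Omega^q_{X/\C}$ is locally free, so $\Omega^q(X) \to \Omega^q(U)$ is injective and a section extends iff it has no poles along the divisorial part of $X \setminus U$; one controls this by choosing a point of $Z$ over each codimension-one point of $X$ where $Z \to X$ is smooth (possible after refining, by generic smoothness along each such point's closure) and observing $\omega$ is regular there, forcing $\omega_U$ to be regular in codimension one, hence everywhere by smoothness of $X$ (normality and the fact that reflexive sheaves on normal schemes — here even locally free — are determined in codimension one).

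The main obstacle I expect is the descent step over the generic locus: making precise the relationship between $\Omega^q$ of the source and target of a smooth morphism so that fppf descent applies cleanly to $\Omega^q$ rather than to $\Omega^1$ or $\mathcal{O}$. The clean way is to use the conormal/relative-cotangent exact sequence $0 \to p^*\Omega^1_{U/\C} \to \Omega^1_{Z_U/\C} \to \Omega^1_{Z_U/U} \to 0$, which splits locally since $Z_U \to U$ is smooth, take $q$-th wedge powers to split $\Omega^q_{Z_U/\C}$ into pieces indexed by how many factors come from the base, and identify the "all factors from the base" summand of global sections of $\Omega^q_{Z_U/\C}$ with $p^*$-pullbacks; the cocycle condition on $Z \times_X Z$ then forces $\omega$ into this summand and standard faithfully flat descent finishes it. I should also remark that this argument only uses that $Z \to X$ is surjective with smooth total space and generically smooth — properness is used solely through Corollary \ref{c:hfactor} to know such a refinement exists — so the proposition follows for all $h$-coverings in $\Sm_h$, and then for $\Sch_h$ as well by Corollary \ref{c:hequiv} once one knows $\Omega^q$ makes sense as a (not necessarily coherent) presheaf there via pullback along smooth models.
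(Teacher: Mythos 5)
Your overall strategy parallels the paper's: reduce to a single covering, split it via Corollary \ref{c:hfactor} into a Zariski piece (handled by Zariski descent) and a proper/smooth-locus piece, descend over a dense open $U \subset X$ where the cover is nice (the paper does this with the exact sequence of relative differentials; your fppf-descent-plus-cotangent-splitting is a reasonable reformulation of the same idea), and then extend the descended form across $X \setminus U$. The first three steps are essentially sound, and the injectivity step via the dominant-morphism lemma is exactly what the paper does.

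The gap is in the extension step. You propose ``choosing a point of $Z$ over each codimension-one point of $X$ where $Z \to X$ is smooth (possible after refining, by generic smoothness along each such point's closure) and observing $\omega$ is regular there, forcing $\omega_U$ to be regular in codimension one.'' This is not available in general: if $D \subset X$ is a divisor contained in the branch locus of $f \colon Z \to X$, then $f$ is smooth at no point lying over the generic point of $D$, and generic smoothness cannot produce one. (Generic smoothness of $f$ gives a dense open of $X$ over which $f$ is smooth, but that open can miss $D$ entirely; generic smoothness of the restriction $E = f^{-1}(D) \to D$ — which the paper does use — only gives smoothness of that restricted map, not of $f$ itself, and so does not give that $f^* \Omega^q_{X}$ sits as a direct summand of $\Omega^q_Z$ near $E$.) This ramified case is exactly what the trace argument in the paper's extension lemma is for: after reducing via Zariski's Main Theorem and Hartogs to the case of a quasi-finite map near the generic point of a single smooth divisor $D$, one applies $\trace$ of $q$-forms to $h'^m\eta$ and shows the multiplier $\trace(h'^m)$ is a unit (equal to $e_{E/D}\deg(E\to D)^m$ on $D$). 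Without a substitute for that step — either the trace argument or some other mechanism controlling poles across the branch divisor — the proof does not close. A secondary, smaller issue is that ``the cocycle condition forces $\omega$ into the base summand'' of $\Omega^q_{Z_U/\C}$ is asserted but not argued; the paper makes this explicit in the $q=1$ case via the diagram chase on the relative cotangent sequences and then takes wedge powers, and you would need to supply something equivalent.
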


\begin{proof}
  We must check, for every covering sieve $R$ of $X$, that
  $\Omega^q(R) = \Omega^q(X)$.  We may assume $X$ is irreducible.  It
  is enough to check for $R$ generated by a single $h$-covering
  family, and in fact a single covering $u:Y \to X$: if $\set{U_i \to
    X}$ is a finite covering family, then $\Omega^q(\set{U_i \to X}) =
  \Omega^q(\coprod_j U_j)$ because $\Omega^q$ is already a Zariski
  sheaf, and ${U_i \to \coprod_j U_j}$ is a Zariski covering.  Since
  every $f \in R$ factors through $u$, the $R$-local sections are just
  elements $\omega \in \Omega^q(Y)$ which, for every pair of maps $f,
  g:Z \rightrightarrows Y$ with $uf = ug$, we have $f^*\omega =
  g^*\omega$. 

  We first check the case where $u$ is a smooth morphism.  In this
  case all pairs $f, g$ factor through the smooth $W = Y \times_X Y
  \rightrightarrows Y$, so it is enough to check for $Z = W.$ For $q =
  0$, this is the usual exact sequence of algebras
  \[ 0 \to A \to B \to B \otimes_A B \] where $A \hookrightarrow B$ is
  the injective map coming from a dominant morphism.  For $q=1$ we
  have from the usual exact sequences of differentials the diagram
 
  \[ \xymatrix{
    0 \ar[dr] \\
    0 \ar[r] & \Omega^1(X) \ar[r] \ar[dr] & \Omega^1(Z) \ar[r] &
    \Gamma(Z, \Omega^1_{Z/X}) = p_1^* \Gamma(Y, \Omega^1_{Y/X})
    \oplus p_2^* \Gamma(Y, \Omega^1_{Y/X}) \\
    & & \Omega^1(Y) \ar@<.5ex>[u] \ar@<-.5ex>[u] \ar@<.5ex>[ur]
    \ar@<-.5ex>[ur] 
  }. \]

  Thus $\Omega^1(X) \hookrightarrow \Omega^1(Y)$, and clearly the
  image is contained in the equalizer of the two vertical arrows.
  Conversely, if a form $\omega \in \Omega^1(Y)$ is sent by both
  vertical arrows to $\eta \in \Omega^1(Z)$, then commutativity of the
  right triangle gives that $\omega$ must be sent to the same place by
  the pair of diagonal arrows.  But the only thing in the intersection
  of the image of $p_1^*$ and $p_2^*$ is zero, hence $\eta$ must lift
  to a form in $\Omega^1(X)$, so $\Omega^1(X)$ is precisely the
  equalizer of the vertical arrows.  The cases $q > 0$ follow from
  applying the (exact) wedge product functor.

  For general $u$, the lemma gives $\Omega^1(X) \hookrightarrow
  \Omega^1(Y)$.  The image of $\Omega^1(X)$ is by definition in the
  intersection of all equalizers.  Conversely, suppose $\omega \in
  \Omega^1(Y)$ is in the equalizer of every pair of arrows $f,g: Z
  \rightrightarrows Y \to X$.  Generic smoothness and the case of a
  smooth morphism show that the result is true at the generic point.
  The proposition then follows from the following lemma.
\end{proof}

\begin{lem}
  Suppose $f:Y \to X$ is an $h$-covering of smooth $\C$-schemes with
  $X$ irreducible.  Let $\set{Y_i}$ be the set of components of $Y$
  which dominate $X$.  Then the diagram
  \[ \xymatrix{ \Gamma(X, \Omega_X^q) \ar@{^(->}[r] \ar@{^(->}[d] &
    \bigoplus_i \Gamma(Y_i, \Omega_{Y_i}^q) \ar@{^(->}[d] \\
    \Omega_X^q \otimes_\C k(X) \ar@{^(->}[r] & \bigoplus_i
    \Omega_{Y_i}^q \otimes_\C k(Y_i) } \] is cartesian: if a $q$-form
  $\omega$ on the generic point of $X$ lifts to a $q$-form on the
  generic point of $Y$ that extends to all of $Y$, then $\omega$
  extends to all of $X$.
\end{lem}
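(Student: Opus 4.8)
The plan is to reduce the statement to a local algebraic question about extending differential forms across a closed subscheme, using the following structure. Let $\omega$ be a $q$-form on the generic point of $X$ such that its pullback $f^*\omega$ (which a priori lives on the generic points of the $Y_i$) actually extends to a regular $q$-form on all of $Y$. We want to conclude that $\omega$ itself is regular on $X$. Since $X$ is smooth and irreducible, the sheaf $\Omega^q_X$ is locally free, hence reflexive, and the regularity of $\omega$ can be tested in codimension one: $\omega \in \Gamma(X, \Omega^q_X)$ if and only if $\omega$ has no pole along any prime divisor $D \subset X$. So the problem is local at the generic point $\eta_D$ of an arbitrary prime divisor $D$ of $X$, i.e. we may replace $X$ by $\Spec \mathscr{O}_{X, \eta_D}$, a discrete valuation ring $\mathscr O$ with fraction field $k(X)$.

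\textbf{The local computation.} Over the DVR $\mathscr O$, the $h$-covering $f:Y \to X$ has at least one component $Y_i$ dominating $X$; restricting over $\Spec \mathscr O$ we get a point $y \in Y_i$ lying over the closed point of $\Spec \mathscr O$ (such a point exists because $f(X') = X$ for $X'$ the union of dominant components, by Voevodsky's Proposition 3.1.3 quoted in the excerpt, and $\Spec \mathscr O$ captures exactly that generic point together with $\eta_D$). Let $v$ be the valuation on $k(X)$ and pick a uniformizer $\pi$. Write $\omega$ in terms of a basis of $\Omega^q_{k(X)/\C}$; if $\omega$ had a pole of order $m > 0$ along $D$, the worst coefficient would be $\pi^{-m} a$ with $a$ a unit in $\mathscr O$. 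Pulling back to $\mathscr O_{Y_i, y}$, the map $\mathscr O \to \mathscr O_{Y_i, y}$ is a local homomorphism of (regular, since $Y_i$ is smooth) local rings, so $v(\pi) \ge 1$ in the valuation(s) of $\mathscr O_{Y_i,y}$ along any divisor through $y$; since $f$ is dominant the pullback $\Omega^q_{k(X)/\C} \hookrightarrow \Omega^q_{k(Y_i)/\C}$ is injective (this is the Lemma on dominant morphisms in the excerpt), and the pullback of $\pi^{-m} a$ still has strictly negative valuation along a divisor of $Y_i$ through $y$ — contradicting that $f^*\omega$ extends regularly to $Y$. Hence $m = 0$, $\omega$ has no pole along $D$, and as $D$ was arbitrary, $\omega \in \Gamma(X, \Omega^q_X)$. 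The injectivity of both horizontal and both vertical maps in the diagram is immediate from the dominant-morphism lemma, so the content is precisely the cartesian-ness, which is what we have just shown.

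\textbf{The main obstacle} I expect is the bookkeeping needed to guarantee that a pole of $\omega$ along a divisor $D \subset X$ forces a pole of $f^*\omega$ along \emph{some} divisor of \emph{some} dominant component $Y_i$, rather than the pole being "absorbed" by ramification — i.e. controlling valuations under the local homomorphism $\mathscr O_{X,\eta_D} \to \mathscr O_{Y_i, y}$. The key point making this work is that $f$ is an $h$-covering, so after the reductions in Corollary \ref{c:hfactor} we may take $Y \to X$ to be, locally, either a Zariski open cover, a finite surjection, or a blowup; in each case one checks directly that a regular form on the source restricts from a regular form on the target, or rather that a form with a genuine pole upstairs cannot come from a regular form — but here we want the converse, which is the genuinely non-formal direction. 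For a blowup this is classical (the exceptional divisor does not create new poles for forms pulled back from the base, by smoothness), for a finite surjection one uses that the extension of DVRs $\mathscr O_{X,\eta_D} \subset \mathscr O_{Y_i, y}$ has $e \ge 1$ so $v_{Y}(\pi) = e \ge 1$ and a pole of order $m$ becomes a pole of order $em \ge m > 0$, and for an open cover it is trivial. Reducing cleanly to these three cases, and keeping track of which component $Y_i$ one lands in at each stage, is where the care is required; the rest is the reflexivity/codimension-one argument, which is routine.
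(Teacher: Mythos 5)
Your overall plan — reduce to a local question at a codimension-one point of $X$ via reflexivity/Hartogs, then argue a pole downstairs must survive upstairs — is reasonable and matches the paper's opening reduction (throwing out codimension $\ge 2$, shrinking to a single smooth divisor $D$). You also correctly flag the crux: ruling out that the pole gets "absorbed" under pullback. But your sketch does not actually close that gap, and the claimed mechanism is wrong. You write that for a finite surjection "a pole of order $m$ becomes a pole of order $em$"; this is true for rational \emph{functions} but false for \emph{forms}. Under $f^*$ the coefficient of $\omega$ is indeed multiplied in valuation by $e$, but the basis elements $dx_I$ also transform and contribute positive valuations that offset the pole. For example, under $y \mapsto y^e = x$, the form $dx/x$ (pole of order $1$) pulls back to $e\,dy/y$ — still order $1$, not $e$; and under the blowup of the origin in $\mathbb{A}^2$, the form $(y\,dx - x\,dy)/x^2$ has a pole of order $2$ yet pulls back to the regular form $-dv$ in one chart (the pole survives only along the strict transform in the other chart). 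So "the pullback of $\pi^{-m}a$ has negative valuation, hence the form has a pole" does not follow: you must rule out cancellation across the various $dx_I$ components, and must identify the right divisor of $Y$ along which the pole actually persists. Your sketch also does not address the reduction to the generically finite situation (the paper does this with a fiber-dimension argument to make $f$ quasi-finite before applying Zariski's Main Theorem), and a point $y \in Y_i$ over $\eta_D$ may a priori lie in codimension $\ge 2$.

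The paper's proof avoids all of this by using a genuinely different tool: the \emph{trace map} of $q$-forms for a finite morphism of normal varieties. After the chain of reductions (Hartogs; replace $Y$ by a dominant subscheme with $E = f^{-1}(D)$ a divisor; generic smoothness and cutting $Y$ to $\dim X$; quasi-finiteness via the fiber-dimension theorem; Zariski's Main Theorem to realize $Y$ as an open subscheme of a finite normal $\bar X \to X$ with boundary $E' = \bar X \setminus Y$ disjoint from $E$), one extends $\eta = f^*\omega$ to a regular form on $\bar X$ after multiplying by a power $h'^m$ of a defining equation of $E'$, and then applies trace: $\mathrm{trace}(h'^m\eta) = \mathrm{trace}(h'^m)\,\omega$ is a regular form on $X$, and the scalar $\mathrm{trace}(h'^m)$ is shown to be invertible near $\eta_D$ by the normalization $h'|_E = 1$. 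This produces the regular extension of $\omega$ directly, sidestepping any valuation-cancellation analysis. If you want to push through a "valuation-chasing" argument instead, you would need to make precise, for each $q$ and each generator $dx_I$, that the worst-order term cannot be cancelled; the mod-$e$ obstruction helps when $e > 1$ and $d\pi \in dx_I$, but other cases require separate arguments, and the whole approach gets delicate quickly — which is presumably why the paper reaches for the trace.
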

\begin{proof}
  By Hartog's theorem\footnote{Or the algebraic version regarding
    normal varieties and codimension $\ge 2$ sets, see
    \cite[II.8.19]{MR0463157}.} we may safely throw out codimension
  $\ge 2$ subsets of $X$.  Hence if $X' \subset X$ is the open set
  where $\omega \in \Omega_X^q \otimes_\C k(X)$ is defined, we may
  assume the complement $D = X - X'$ is a union of finitely many
  smooth divisors (throwing out singular and intersection sets.)  We
  may extend over one divisor at a time, so assume $D$ is a single
  smooth divisor.

  Note its is enough to prove the lemma after replacing $Y$ with any
  subscheme which dominates $X$ so that $E = f^\inv(D)$ is non-empty.
  Throwing out closed subsets we may assume $E$ is a divisor.  Let
  $\phi:E \to D$ be $f$ restricted to $E$.  Generic smoothness gives a
  point $y \in E$ where $\phi$ is smooth over $x = \phi(y) \in D$.  We
  choose a complementary subspace to $m_{E,y}/m_{E,y}^2 \subset
  m_{Y,y} / m_{Y,y}^2$, and lift generators of this subspace to
  equations $g_1, \dots, g_r$ in $\cO_{Y,y}$.  We replace $Y$ with a
  subvariety defined by the $g_i$ in some neighborhood of $y \in Y$
  where the $g_i$ are defined, so we can assume $\dim Y = \dim X$, and
  throwing out $\codim \ge 2$ points of $X$ and closed subsets of $Y$
  we may assume that $Y$ is smooth and connected, $E$ is a smooth
  connected divisor, and $\phi$ is \'{e}tale at $y$.

  The theorem on the dimension of fibers of a morphism (\cite[II ex
  3.22]{MR0463157}) gives the subset of $U \subset X$ where $f$ is
  quasi-finite is open.  The complement $C = X - U$ is at worst
  dimension $\dim X - 1$.  If it is equal to $\dim X - 1$, then its
  preimage is also $\dim X - 1 = \dim Y - 1$, so applying the theorem
  again to components of $C$ we get a dense open set of $C$ where $f$
  is quasi-finite: thus the subset of $X$ where $f$ is not
  quasi-finite is at least codimension $2$ and we may safely throw
  that out, so we may assume $f$ is quasi-finite.

  By Zariski's Main Theorem (\cite[4.4.3]{MR0217086} or
  \cite[III.9.I]{MR1748380}) we have a factorization $Y \subset
  \Sh X \overset{\pi}{\to} X$ where $Y$ is an open immersion
  in the normalization $\Sh X$ of $X$ in $k(Y)$.  Let $E' =
  \Sh X - Y$.  Let $W = \pi(E') - D$.  Since by Hartog's
  theorem we only have to extend across the generic point of $D$, we
  may throw out $W$.  Hence we may assume $\pi(E') \cap D$ is either
  empty or else is all of $D$.  Throwing out more points we may assume
  $E$ and $E'$ are disjoint smooth divisors.  Again we only have to
  extend over the generic point of $D$, so we may assume $X$ and
  $\Sh X$ are affine.  Let $h, h'$ be defining equations for
  $E, E'$; these exist since the the stalk of $f_*\cO_Y$ over $\cO_{X,
    D}$ is a semi-local PID.  We may assume $h|_{E'} = 1 = h'|_E$.

  We have an $\omega \in \Omega^q_{X/\C} \otimes_\C k(X)$ so that
  $f^*\omega$ extends to an $\eta \in \Gamma(Y, \Omega^q_{Y/\C})$.
  Then for some $m$ large enough $h'^m\eta \in \Gamma(\Sh X,
  \Omega^q_{\Sh X/\C})$.  The theory of traces of $q$-forms
  (for example \cite[4.6.7]{MR868864}) gives us a $q$-form on $X$
  \[ \trace(h'^m \eta). \] Away from $D$, we have \[ \trace(h'^m\eta)
  = \trace(h'^m) \omega \] so it is enough to show that $\trace(h'^m)$
  is invertible.  Since we can throw out closed subsets not containing
  $D$, it is enough to show $\trace(h'^m)|_D$ is invertible.  But this
  is just
  \[ e_{E/D} \trace_{E/D}(h'|_E)^m = e_{E/D} \deg(E \to D)^m \] since
  $h'|E = 1$, where $e_{E/D}$ is the ramification.
\end{proof}

\begin{rem}
  We have a complex of sheaves $\Omega^\dt$ on $\Sm_h$ and an
  augmentation
  \[ 0 \to \C \to \cO \to \Omega^1 \to \Omega^2 \to \cdots \] coming
  from the usual inclusions and exterior differentiation.  The complex
  $\Omega^\dt$ has a natural filtration, the filtration b\^{e}te.
\end{rem}

Fix an $X \in \Sch$.  For simplicity we assume $X$ is irreducible.  We
consider the sites $\Sm_h/X$ of smooth $\C$-schemes over $X$,
$\Sch_h/X$ all $\C$-schemes over $X$, and $X_{\Zar}$ the small site of
Zariski-open subsets of $X$.  The natural inclusion $\gamma:X_{\Zar}
\hookrightarrow \Sch_h / X$ gives $X_{\Zar}$ the induced Grothendieck
topology, since a family of Zariski open sets is a Zariski cover only
if it is an $h$-cover.  Therefore $\gamma$ is continuous \cite[ex III
3.1]{MR0354652} and induces a geometric morphism of topoi \cite[ex III
1.2.1]{MR0354652} which we also denote by $\gamma$: \[ \gamma =
(\gamma^*, \gamma_*): \Sh \Sm_h / X \simeq \Sh \Sch_h / X \to \Sh
X_{\Zar} \] the first equivalence being given by Corollary
\ref{c:hequiv}. Perhaps confusingly, for an $h$-sheaf $F$ we have
$\gamma_* F = F \circ \gamma$.  Note that \[ \Z{X_h} = \gamma^*
\Z{X_{\Zar}} \] as both are the sheaf of free abelian groups
associated to the constant presheaf with value $\Z$.

\begin{rem}
  Since $\Omega^q$ is a sheaf on $\Sm_h$, for any $X \in \Sch$ and any
  diagram
  \[ X \leftarrow X_0 \leftleftarrows X_0 \times_X X_0 \leftarrow X_1
  \] where $X_0 \to X$ is an $h$-covering and $X_0, X_1 \in \Sm$,
  $\gamma_*\Omega^q_X$ is determined by the exact sequence
  \[ 0 \to \Gamma(X, \gamma_*\Omega^q_X) \to \Gamma(X_0,
  \Omega^q_{X_0/\C}) \rightrightarrows \Gamma(X_1, \Omega^q_{X_1/\C}).
  \] This shows $\gamma_*\Omega^q_X$ is quasi-coherent.  Since by
  \cite{MR0199184,MR1423020} we can choose proper $h$-covers,
  $\gamma_*\Omega^q_X$ is coherent.
\end{rem}

\subsection{Results of Du Bois}

\begin{defn}
  Let $X$ be a $\C$-scheme.  A \emph{good cover} of $X$ is a smooth
  representable $h$-hypercover $Z_\dt \to X$ with components
  quasi-projective and proper over $X$.
\end{defn}

\begin{thm}[{\cite[3.11]{MR613848}}]\label{t:dubois}
  Let $X$ be a $\C$-scheme, and $e:K_\dt \to X, e':K'_\dt \to X$ two
  good covers of $X$.  Let $\alpha:K'_\dt \to K_\dt$ be a map over
  $X$.  Then the induced map
  \[ Re_*(\Omega^p_{K_\dt/\C}) \to Re'_*(\Omega^p_{K'_\dt/\C}) \] is
  an isomorphism in the derived category.
\end{thm}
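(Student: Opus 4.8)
The plan is to reduce the statement to the generalized Verdier theory developed in Section~2. First I would observe that a good cover $Z_\dt \to X$ is in particular a representable local acyclic fibration in the $h$-topology (it is a representable $h$-hypercover), so it defines an object of $hR_{\Sm}(X)$ where $X$ is regarded as the constant simplicial presheaf. The key geometric input is that for a good cover $e:K_\dt \to X$ with components proper over $X$, the analytic cohomological descent results of Deligne and Du Bois identify $Re_*\Omega^p_{K_\dt/\C}$ with $\gamma_* \Omega^p$ (or rather a representative of $R\gamma_*\Omega^p$) on the Zariski site; more precisely, $\Tot\Hom(\Z{K^\dt}, \cF^\dt)$ computes $R\Hom(\Z{X^\dt},\cF^\dt)$ by Corollary~\ref{c:rep} applied with $\cF^\dt$ an injective (or flasque) resolution of the sheaf of $\cO$-modules underlying $\Omega^p$ in $\Sm_h$.

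The main steps, in order, are: (1) Interpret $Re_*\Omega^p_{K_\dt/\C}$ sheaf-theoretically. Because $K_\dt$ is smooth and proper over $X$ and $\Omega^p$ is an $h$-sheaf (the preceding proposition), the Čech-style complex $e_*\Omega^p_{K_\dt/\C}$ with its natural augmentation is, up to quasi-isomorphism, the complex $\Hom(\Z{K^\dt}, \cI^\dt)$ for $\cI^\dt$ an injective resolution of $\Omega^p$ on $\Sm_h/X$; this uses that the components $K_n$ are smooth, so $\Omega^p_{K_n/\C} = \Omega^p(K_n)$, and that pushforward along a proper map of smooth varieties of $\Omega^p$ requires no higher derived terms in this formulation (the relevant acyclicity is exactly what the $h$-sheaf property buys). (2) Apply Corollary~\ref{c:rep}: since $K_\dt$ and $K'_\dt$ both lie in $R_{\Sm}(X)$, and $hR_{\Sm}(X)$ is cofiltered (cofinal in $hD(X)$), the map $\alpha:K'_\dt \to K_\dt$ over $X$ induces a morphism $\Z{K^\dt} \to \Z{K'^\dt}$ which is a quasi-isomorphism by Proposition~\ref{p:jardine} (both structure maps to $\Z{X^\dt}$ are quasi-isomorphisms, being local acyclic fibrations). (3) Conclude that $\Hom(\Z{K^\dt},\cI^\dt) \to \Hom(\Z{K'^\dt},\cI^\dt)$ is a quasi-isomorphism, hence $Re_*\Omega^p_{K_\dt/\C} \to Re'_*\Omega^p_{K'_\dt/\C}$ is an isomorphism in the derived category.

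The hard part will be step~(1): carefully justifying that the naive pushforward complex $e_*\Omega^p_{K_\dt/\C}$ on the Zariski site genuinely represents $R\Hom(\Z{X^\dt},\Omega^p)$ computed in the $h$-topos, i.e. that no higher direct images intervene in this simplicial formulation. This is where the properness of the good cover and the fact that $\Omega^p$ is an $h$-sheaf (so that its sections over $K_n$ are computed correctly, and base change behaves well along the proper maps $K_n \to X$) must be combined; it is essentially a translation of Du Bois' original descent argument into the present sheaf-theoretic language, and one must check that the filtration b\^ete is respected throughout so that the identification is filtered. Once that dictionary is in place, the abstract machinery of Section~2 makes the independence of the good cover formal.
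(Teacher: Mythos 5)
The paper does not prove Theorem \ref{t:dubois}; it is cited directly from Du Bois \cite{MR613848}, and the text following the statement only recalls how the comparison morphism is constructed and why GAGA applies. So you are attempting a new proof. Your steps (2) and (3) are correct, but they establish something weaker than you suggest: that the $h$-theoretic object $R\Hom_h(\Z{K^\dt},\Omega^p)$ does not depend on the good cover. That much is automatic from Proposition \ref{p:jardine} and two-out-of-three, because for any local acyclic fibration $K_\dt \to X$ the map $\Z{K^\dt} \to \Z{X}$ is already a quasi-isomorphism; Corollary \ref{c:rep} is not even needed for it.

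The gap is step (1), the identification of the Zariski-theoretic $Re_*\Omega^p_{K_\dt/\C}$ with $\Tot\Hom(\Z{K^\dt},\cF^\dt)$ for an $h$-injective resolution $\cF^\dt$ of $\Omega^p$, and your proposed justification of it would fail. You write that ``pushforward along a proper map of smooth varieties of $\Omega^p$ requires no higher derived terms,'' with the $h$-sheaf property of $\Omega^p$ supplying the acyclicity. This is false: for $K_n$ smooth and projective over $X = \Spec \C$, one has $R^i e_{n*}\Omega^p_{K_n/\C} = H^i(K_n,\Omega^p_{K_n})$, the Hodge numbers, which are nonzero. Du Bois' theorem is not a vanishing statement about components; it says the \emph{total simplicial complex} is independent of the good cover, and his proof uses genuine Hodge theory. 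Analytic cohomological descent makes $Re_*\Omega^\dt_{K_\dt}$ cover-independent (it computes $\C_X$), and then $E_1$-degeneracy of the Hodge-to-de Rham spectral sequence together with strictness of morphisms of mixed Hodge structures --- both requiring properness, smoothness, and characteristic zero --- transfer this independence to each graded piece $\Omega^p$. None of that is available from the abstract Verdier machinery of Section 2. Worse, the identification you want in step (1) is in substance the later Proposition of the paper asserting $R\Hom_{\Zar}(\Z{Z^\dt},\Omega^\dt) \simeq R\Hom_h(\Z{X},\Omega^\dt)$ for a good cover $Z_\dt$, whose proof \emph{inputs} the Du Bois theorem together with Theorem \ref{t:beilinson}; so the reduction is circular. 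Corollary \ref{c:rep} only identifies a filtered colimit over all hypercovers with $R\Hom_h$; that any single good cover already computes the answer is exactly the extra Hodge-theoretic content that you would have to supply and that cannot come from the site-theoretic formalism alone.
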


The morphism is constructed by applying $Re_*$ to
\[ \Omega^p_{K_\dt/\C} \to \alpha_* \Omega^p_{K'_\dt/\C} \to R\alpha_*
\Omega^p_{K'_\dt/\C}. \] This direct image is computed in the Zariski
topology; by GAGA \cite{MR2017446} this commutes with analytification,
since all components are proper over the base $X$.

\begin{cor}[{\cite[3.17]{MR613848}}]\label{c:dubois}
  Same hypotheses as above.  Giving the complexes
  $\Omega^\cdot_{K_\dt/\C}$, $\Omega^\cdot_{K'_\dt/\C}$ the filtration
  b\^{e}te, the canonical map
  \[ Re_*(\Omega^\cdot_{K_\dt/\C}) \to Re'_*(\Omega^\cdot_{K'_\dt/\C})
  \] is an isomorphism in the filtered derived category.
\end{cor}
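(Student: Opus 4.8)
The plan is to deduce this from Theorem \ref{t:dubois} one graded piece at a time. Recall that for a bounded-below cochain complex $A^\dt$ the filtration b\^{e}te $\sigma$ is the decreasing filtration with $\sigma_{\ge p}A^\dt = (\cdots \to 0 \to A^p \to A^{p+1} \to \cdots)$, so that its associated graded is $\mathrm{gr}^p_\sigma A^\dt = A^p[-p]$; this filtration is exhaustive and, in each cohomological degree, finite. It is functorial for arbitrary morphisms of complexes, so the map $\Omega^\dt_{K_\dt/\C} \to R\alpha_*\Omega^\dt_{K'_\dt/\C}$ underlying the comparison of Theorem \ref{t:dubois} is a morphism of b\^{e}te-filtered complexes, and applying $Re_*$ (in the Zariski topology, via a filtered resolution by $e_*$-acyclics) produces the canonical map of the statement as a morphism in the filtered derived category $D^+F(X_{\Zar})$.

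First I would record that, with $Re_*$ computed by transporting the stupid filtration through a compatible choice of $e_*$-acyclic resolutions of the $\Omega^p_{K_\dt/\C}$ and totalizing, there is a natural identification $\mathrm{gr}^p_\sigma Re_* A^\dt \simeq Re_*(\mathrm{gr}^p_\sigma A^\dt)$; exactness of $\mathrm{gr}^p_\sigma$ makes this transparent. Under it, the map induced on $p$-th graded pieces by the morphism of the corollary is, up to the shift $[-p]$, precisely the map
\[ Re_*(\Omega^p_{K_\dt/\C}) \to Re'_*(\Omega^p_{K'_\dt/\C}) \]
of Theorem \ref{t:dubois}, which is an isomorphism in $D^+(X_{\Zar})$.

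Hence the map of the corollary is a \emph{filtered quasi-isomorphism}: it induces an isomorphism on every $\mathrm{gr}^p_\sigma$. Since the b\^{e}te filtration is exhaustive and finite in each degree, a filtered quasi-isomorphism is an isomorphism in $D^+F$; this follows from a short induction on the length of the filtration in each degree using the five lemma on the cohomology sequences of the short exact sequences $0 \to \sigma_{\ge p+1} \to \sigma_{\ge p} \to \mathrm{gr}^p_\sigma \to 0$, or one may simply invoke the comparison of the associated spectral sequences (cf.\ \cite[ex V]{MR0354653}). This proves the corollary. The only step needing genuine attention is the first paragraph's identification of graded pieces after $Re_*$ --- the usual filtered-resolution bookkeeping --- while the real input is Theorem \ref{t:dubois}, applied once for each $p$.
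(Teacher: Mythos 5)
Your proof is correct, and it is the expected reduction: the b\^{e}te filtration is biregular (finite in each cohomological degree), its $p$-th graded piece is $\Omega^p[-p]$, the filtered derived functor $Re_*$ (computed via filtered acyclic resolutions) commutes with $\mathrm{gr}$, and a filtered morphism inducing isomorphisms on all graded pieces is an isomorphism in the filtered derived category --- which Theorem \ref{t:dubois} supplies degree by degree. The paper itself gives no argument here, citing Du Bois \cite[3.17]{MR613848} instead, so your write-up supplies the standard deduction that the citation is standing in for.
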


For $X$ smooth, we can take $K_\dt = X_\dt$ the constant simplicial
scheme; this clearly is a smooth resolution of $X$.  In this case the
theorems degenerate to

\begin{prop}
  For $X$ a smooth $\C$-scheme and good cover $e':K'_\dt \to X$, we
  have
  \[ e'_* \Omega^q_{K'_\dt/\C} = \Omega^q_{X/\C} \] and $R^ie'_*
  \Omega^q_{K'_\dt/\C} = 0$ for $i > 0$.
\end{prop}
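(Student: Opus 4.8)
The plan is to derive the statement from Du Bois' Theorem \ref{t:dubois}, taking as the comparison good cover the constant simplicial scheme, exactly as the paragraph preceding the proposition anticipates. Write $X_\dt = \cosk_0(X/X)$ for the constant simplicial scheme on $X$ (all faces and degeneracies the identity) and $e\colon X_\dt \to X$ for its augmentation, which on each level is $\id_X$. The first thing I would check is that $X_\dt$ is a good cover of $X$ in the required sense: each component is $X$, which is smooth by hypothesis and representable; each structure map $X \to X$ is the identity, hence projective (so quasi-projective) and proper; and $X_\dt \to X$ is the \v{C}ech nerve of the covering $\id_X\colon X \to X$, which is trivially an $h$-covering, so $X_\dt \to X$ is an $h$-hypercover.

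Next, observe that the augmentation $e'\colon K'_\dt \to X$ of the given good cover is the same datum as a map of simplicial schemes $\alpha\colon K'_\dt \to X_\dt$ over $X$ (a map into the constant simplicial object is precisely the augmentation data). Applying Theorem \ref{t:dubois} to the two good covers $e\colon X_\dt \to X$, $e'\colon K'_\dt \to X$, and the map $\alpha$, I obtain an isomorphism in the derived category
\[ Re_*\bigl(\Omega^q_{X_\dt/\C}\bigr) \xrightarrow{\sim} Re'_*\bigl(\Omega^q_{K'_\dt/\C}\bigr). \]

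It then remains to compute the left-hand side, which is what makes the theorem ``degenerate.'' Since every $e_n\colon X \to X$ is the identity, $Re_*(\Omega^q_{X_\dt/\C})$ is computed by the cochain complex associated to the cosimplicial sheaf $n \mapsto \Omega^q_{X/\C}$ all of whose coface maps are identities, i.e.
\[ \Omega^q_{X/\C} \xrightarrow{0} \Omega^q_{X/\C} \xrightarrow{\id} \Omega^q_{X/\C} \xrightarrow{0} \cdots, \]
the differential out of level $n$ being the alternating sum $\sum_i (-1)^i \id$. This complex is quasi-isomorphic to $\Omega^q_{X/\C}$ placed in degree $0$. Hence $Re'_*(\Omega^q_{K'_\dt/\C})$ is quasi-isomorphic to $\Omega^q_{X/\C}$ in degree zero; since $H^0$ of $Re'_*$ of a simplicial sheaf recovers the naive direct image $e'_* = \ker\bigl(e'_{0*}\Omega^q_{K'_0} \rightrightarrows e'_{1*}\Omega^q_{K'_1}\bigr)$ and $H^i$ recovers $R^i e'_*$, this is exactly the assertion that $e'_*\Omega^q_{K'_\dt/\C} = \Omega^q_{X/\C}$ and $R^i e'_*\Omega^q_{K'_\dt/\C} = 0$ for $i > 0$.

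The only step needing genuine care is the first one: confirming that the innocuous-looking constant simplicial scheme really meets every clause of the definition of a good cover — in particular that the \v{C}ech nerve of $\id_X$ counts as a representable $h$-hypercover with quasi-projective, proper components — so that Du Bois' theorem applies with $K_\dt = X_\dt$. After that, everything is the formal collapse of the degenerate cochain complex.
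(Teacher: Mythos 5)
Your proposal is correct and takes exactly the approach the paper intends: the paper offers no separate proof beyond the remark ``For $X$ smooth, we can take $K_\dt = X_\dt$ the constant simplicial scheme... In this case the theorems degenerate to'' the stated proposition, and your argument — verifying $X_\dt$ is a good cover, applying Theorem \ref{t:dubois} with the augmentation as the map $\alpha$, and collapsing the constant cosimplicial complex $\Omega^q_{X/\C} \xrightarrow{0} \Omega^q_{X/\C} \xrightarrow{\id} \cdots$ to $\Omega^q_{X/\C}$ in degree $0$ — is precisely the verification the reader is meant to supply.
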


\begin{cor}
  Same hypotheses as above.  Giving the complexes
  $\Omega^\cdot_{K'_\dt/\C}$, $\Omega^\cdot_{X/\C}$ the filtration
  b\^{e}te, the canonical map
  \[ Re_*(\Omega^\cdot_{K'_\dt/\C}) \to \Omega^\cdot_{X/\C}
  \] is an isomorphism in the filtered derived category.
\end{cor}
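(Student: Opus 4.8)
The plan is to reduce the statement to the corresponding unfiltered statement of the preceding Proposition together with Corollary \ref{c:dubois}, exactly as the corollary following Theorem \ref{t:dubois} was reduced. Recall that the filtration b\^{e}te on a bounded-below complex $\Omega^\dt$ is the decreasing filtration by the brutal truncations $\sigma^{\ge p}\Omega^\dt$, whose graded pieces $\sigma^{\ge p}\Omega^\dt / \sigma^{\ge p+1}\Omega^\dt$ are the single sheaves $\Omega^p[-p]$ placed in degree $p$. A map of filtered complexes is a filtered quasi-isomorphism precisely when it induces a quasi-isomorphism on each associated graded piece, so it suffices to check that $Re_*$ applied to the canonical map $\Omega^\dt_{K'_\dt/\C} \to \Omega^\dt_{X/\C}$ induces an isomorphism in the derived category on each $\mathrm{gr}^p$.

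First I would record that $Re_*$ is compatible with brutal truncation: since $\sigma^{\ge p}$ is an exact filtration of the complex by subcomplexes, applying the exact triangles $\sigma^{\ge p+1} \to \sigma^{\ge p} \to \Omega^p[-p]$ and the triangulated functor $Re_*$ gives compatible exact triangles downstairs, and the map of filtered complexes induces a morphism of the resulting towers of triangles. Thus the filtered map $Re_*(\Omega^\dt_{K'_\dt/\C}) \to \Omega^\dt_{X/\C}$ has, on the $p$-th graded piece, the map $Re_*(\Omega^p_{K'_\dt/\C})[-p] \to \Omega^p_{X/\C}[-p]$. By the preceding Proposition, $e'_*\Omega^p_{K'_\dt/\C} = \Omega^p_{X/\C}$ and $R^i e'_*\Omega^p_{K'_\dt/\C} = 0$ for $i > 0$, so this map is an isomorphism in the derived category of Zariski sheaves on $X$. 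Hence every graded piece is an isomorphism, and therefore the map of filtered complexes is a filtered quasi-isomorphism, which is the claim.

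The only real point requiring care — and the step I expect to be the mild obstacle — is making precise the sense in which the map on associated graded pieces is the one coming from the previous Proposition, i.e. checking that the brutal-truncation filtration on the source $\Omega^\dt_{K'_\dt/\C}$ is genuinely pushed forward to the brutal-truncation filtration on $\Omega^\dt_{X/\C}$ under $e'_*$, and that no extension or convergence issue spoils the identification of $\mathrm{gr}^p Re_*$ with $Re_*\,\mathrm{gr}^p$. This is handled by the boundedness below of $\Omega^\dt$ (the filtration is finite in each degree) and by the vanishing of higher direct images of each $\Omega^p$, which forces the tower of triangles to degenerate. Alternatively, and perhaps more cleanly, one simply invokes Corollary \ref{c:dubois} directly: taking $K_\dt$ there to be the constant simplicial scheme $X_\dt$ (a good cover of $X$ when $X$ is smooth, as noted before the Proposition), the composite $Re_*(\Omega^\dt_{K'_\dt/\C}) \to \Omega^\dt_{X_\dt/\C} = \Omega^\dt_{X/\C}$ is exactly the canonical map in question, and Corollary \ref{c:dubois} asserts it is a filtered-derived-category isomorphism. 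I would present the proof in this second, shorter form, with the graded-piece argument above as the underlying justification for why the constant simplicial scheme computes the expected thing.
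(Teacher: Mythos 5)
Your proposal is correct and, in the short form you say you would present, it is exactly the paper's (implicit) argument: the text introduces this Corollary precisely as the degeneration of Corollary~\ref{c:dubois} upon taking $K_\dt = X_\dt$ the constant simplicial scheme, which is a good cover when $X$ is smooth. Your graded-piece argument is a sound unwinding of why that specialization gives the filtered statement, but the paper does not spell it out.
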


\subsection{Comparison of $h$- and Zariski topology.}

The following result comes from generous suggestion of Alexander
Beilinson.

\begin{thm}\label{t:beilinson}
  Let $X$ be a $\C$-scheme, and $\cF^\dt$ a bounded below complex of
  sheaves of abelian groups in $\Sm_h/X$ given the filtration
  b\^{e}te.  Let $Q(X)$ be the subcategory of good covers of $X$ in
  $R_{\Sm}(X)$ the category of representable smooth $h$-hypercovers of
  $X$.  Then the associated homotopy category of cochain complexes
  $hQ(X)$ is cofiltered, and there is a filtered quasi-isomorphism of
  ind-objects
  \[ \underset{Z_\dt \in hQ(X)} {"\varinjlim"} R\Hom_{\Zar}(\Z{Z^\dt},
  \cF^\dt)) \simeq R\Hom_h(\Z{X}, \cF^\dt). \] 
\end{thm}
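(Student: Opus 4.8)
The plan is to deduce this from the generalized Verdier machinery of Section 2 together with Du Bois' invariance results. First I would observe that the category $Q(X)$ of good covers is a subcategory of $R_{\Sm}(X)$, and that the latter computes $h$-cohomology by Corollary \ref{c:rep}; so the crux is that $hQ(X)$ is cofinal in $hR_{\Sm}(X)$, or at least that the colimit over $hQ(X)$ already computes $R\Hom_h(\Z{X},\cF^\dt)$. To get cofiltration of $hQ(X)$ I would run exactly the argument of Proposition \ref{p:verd}(1): given two good covers $A_\dt, B_\dt \to X$ mapping to a common $K_\dt$, form a fiber product and then use the split-simplicial-object formalism of Remark \ref{r:split} together with resolution of singularities and Chow's lemma (as in Corollary \ref{c:hfactor}) to replace it inductively, degree by degree, by a \emph{smooth, quasi-projective, proper-over-$X$, representable} hypercover — i.e. an object of $Q(X)$. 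The key point at each inductive stage is that the relevant matching object $(\cosk_{n-1})_n$-fiber product is again quasi-projective and proper over $X$, so a resolution of singularities of it followed by Chow's lemma lands back in $Q(X)$; the homotopy-equalizing half of the cofiltration argument goes through verbatim from Proposition \ref{p:verd}(1) since that argument only used existence of fiber products in $s\Pre C$ and the local right lifting property, both available here.

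Next I would address the comparison of topologies in the $\Hom$. For a representable smooth $h$-hypercover $Z_\dt \to X$ with each $Z_n$ proper over $X$, the augmented simplicial scheme $Z_\dt \to X$ is a $ucd$-resolution (every $h$-cover is a $ucd$-cover by the corollary in Section 3.3, and a $ucd$-hypercover is a $ucd$-resolution by the theorem of \cite{MR0498552}); hence for a single sheaf this identifies $R\Hom_{\Zar}(\Z{Z^\dt}, \cF)$ with $R\Gamma$ over the base of the derived pushforward, which is the content one needs. More directly: by Corollary \ref{c:rep} applied on the $h$-site, $\varinjlim_{K_\dt \in hR_{\Sm}(X)} \Tot\Hom(\Z{K^\dt},\cF^\dt) \simeq R\Hom_h(\Z{X},\cF^\dt)$, while for each fixed $Z_\dt \in Q(X)$ the complex $\Tot\Hom_{\Zar}(\Z{Z^\dt},\cF^\dt)$ (the Zariski-sheaf Hom) computes $R\Hom_{\Zar}(\Z{Z^\dt}, \cF^\dt)$; so the statement reduces to: (a) $hQ(X)$ is cofinal in $hR_{\Sm}(X)$, and (b) for $Z_\dt \in Q(X)$ the natural map from the Zariski-computed $R\Hom$ to the $h$-computed $R\Hom$ of $\Z{Z^\dt}$ into $\cF^\dt$ is a filtered quasi-isomorphism — which holds because $Z_\dt$ is a disjoint union of smooth schemes where, for the $\Omega^q$ (or any $h$-sheaf restricted to such), Zariski and $h$-cohomology agree on the components of a $ucd$-resolution. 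The filtration bête is respected throughout since all the constructions are term-by-term in the complex $\cF^\dt$, so Corollary \ref{c:rhom} upgrades the unfiltered statement to the filtered one.

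The main obstacle is step (a): showing $hQ(X)$ is cofinal in $hR_{\Sm}(X)$, equivalently that every representable smooth $h$-hypercover admits a map \emph{from} a good cover over $X$. This requires doing the inductive split-simplicial construction of Proposition \ref{p:frep} while simultaneously imposing three properties — representable (not just finitely representable), smooth, and quasi-projective-and-proper-over-$X$ — and checking that the matching-object fiber products at each stage stay in a class where resolution of singularities plus Chow's lemma can restore all three properties at once. The properness-over-$X$ condition is the delicate one: fiber products of schemes proper over $X$ are proper over $X$, and Chow's lemma preserves properness over the base while achieving quasi-projectivity, and resolution of singularities is proper, so the class is stable — but one must be careful that the "copies of lower-degree pieces" added for degeneracies (Remark \ref{r:split}) are also proper over $X$, which they are since they are just the already-constructed $Z_k$. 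Once the inductive step is set up correctly, the rest is bookkeeping; I would present step (a) as the substantive lemma and treat (b) and the passage to the filtered derived category as immediate consequences of Du Bois' results (Theorem \ref{t:dubois}, Corollary \ref{c:dubois}) and Corollary \ref{c:rhom}.
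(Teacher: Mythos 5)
Your overall framing (reduce to a cofinality/comparison statement between good covers and all smooth representable $h$-hypercovers, using the Verdier machinery of Section 2) correctly identifies the shape of the problem, but the key step you propose does not work, and the paper proves something weaker and more roundabout precisely to avoid this.

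You aim to show $hQ(X)$ is \emph{cofinal} in $hR_{\Sm}(X)$: every smooth representable $h$-hypercover $K_\dt$ should receive a map from a good cover $Z_\dt$ over $X$. The inductive construction you sketch (à la Proposition \ref{p:frep}) requires choosing, at stage $n+1$, a smooth proper-over-$X$ scheme $Z_{n+1}$ mapping to the matching object $W_{n+1}=(\cosk_{n}Z)_{n+1}\times_{(\cosk_{n}K)_{n+1}}K_{n+1}$ and surjecting onto $(\cosk_n Z)_{n+1}$. Because $Z_{n+1}$ is proper over $X$ and $W_{n+1}$ is separated over $X$, the image of $Z_{n+1}$ in $W_{n+1}$ must be a closed subscheme that is proper over $X$ and surjects onto $(\cosk_nZ)_{n+1}$. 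But $W_{n+1}$ involves $K_{n+1}$, which is merely a smooth scheme with an $h$-cover to the coskeleton and need not be proper over $X$; Corollary \ref{c:hfactor} refines the $h$-cover $W_{n+1}\to(\cosk_n Z)_{n+1}$ to a composition (Zariski open)$\circ$(proper), where the proper factor lies \emph{below}, not inside, $W_{n+1}$ — so there is no reason a proper-over-$X$ closed surjecting subscheme of $W_{n+1}$ exists. Resolution of singularities and Chow's lemma preserve properness over the base; they cannot \emph{create} it, which is what your inductive step would need. This is exactly why the paper does not attempt cofinality of $Q(X)$: instead, for each $K_\dt$, it builds a \emph{zig-zag}
\[ Z_\dt \xleftarrow{\ \pi\ } L_\dt \xrightarrow{\ \phi\ } K_\dt \]
with $Z_\dt$ a good cover, $L_\dt$ a smooth representable $h$-hypercover (a degreewise Zariski open of $Z_\dt$, hence typically \emph{not} proper over $X$), and $\pi$ a Zariski local acyclic fibration. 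The $L_\dt$ — not the $Z_\dt$ — are cofinal in $R_{\Sm}(X)$, and the Zariski local acyclic fibration $\pi$ identifies the colimit of the Čech complexes $\Tot\Hom(\Z{L^\dt},\cF^\dt)$ with the colimit of $R\Hom_{\Zar}(\Z{Z^\dt},\cF^\dt)$, using Corollary \ref{c:rep} once more in the Zariski topology on each $Z_\dt$.

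Two secondary issues. First, your step (b), that for a fixed good cover $Z_\dt$ the complex $\Tot\Hom_{\Zar}(\Z{Z^\dt},\cF^\dt)$ already computes $R\Hom_{\Zar}(\Z{Z^\dt},\cF^\dt)$, is false in general: the components $Z_n$ are not Zariski-acyclic for an arbitrary bounded below $\cF^\dt$, and the whole point of the limit over Zariski local acyclic fibrations $L_\dt\to Z_\dt$ in the paper's argument is to correct for this. Second, you invoke the Du Bois invariance theorems (Theorem \ref{t:dubois}, Corollary \ref{c:dubois}), but those are specific to $\Omega^q$ and do not enter the proof of Theorem \ref{t:beilinson} at all — they are applied only afterward, in the subsequent Proposition specialising $\cF^\dt=\Omega^\dt$. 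Theorem \ref{t:beilinson} itself is a purely formal statement about an arbitrary bounded below complex of $h$-sheaves, proved entirely by the zig-zag/cofinality argument plus the \v{C}ech machinery of Section 2.
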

\begin{proof}
  We construct, for any smooth representable $h$-hypercover $K_\dt \to
  X$, a diagram
  \[ Z_\dt \overset{\pi}{\leftarrow} L_\dt \overset{\phi}{\rightarrow}
  K_\dt \] where $L_\dt$ is a smooth representable $h$-hypercover of
  $X$, $Z_\dt$ is a good cover of $X$, and $\pi$ is a local acyclic
  fibration in the Zariski topology.  Assuming such a construction
  exists, then the $L_\dt$ are cofinal in all smooth representable
  hypercovers, so by Corollary \ref{c:rep} we have a filtered
  quasi-isomorphism
  \[ \underset{L^\dt}{"\varinjlim"} \Tot\Hom(\Z{L^\dt}, \cF^\dt) \simeq
  R\Hom_h(\Z{X}, \cF^\dt). \] Note of course $\Tot\Hom(\Z{L^\dt}, \blank)$
  does not see the topology.
  
  Now there is also a natural morphism
  \[ \underset{L^\dt}{"\varinjlim"} \Tot\Hom(\Z{L^\dt}, \cF^\dt) \to
  \underset{Z^\dt}{"\varinjlim"} R\Hom_{\Zar}(\Z{Z^\dt}, \cF^\dt) \]
  where the $Z_\dt$ run over good covers of $X$.  Each $L_\dt$ is a
  Zariski local acyclic fibration of some $Z_\dt$, which gives the
  map.  We claim in the limit this is a filtered quasi-isomorphism.
  This is because we can compute $R\Hom_{\Zar}(\Z{Z^\dt}, \cF^\dt)$ as
  the limit of \v{C}ech cohomology over Zariski local fibrations
  $L_\dt \to Z_\dt$ by Corollary \ref{c:rep}, and every such $L_\dt$
  appears on the left side.  The composition gives the desired
  filtered quasi-isomorphism
  \[ \underset{Z^\dt \in hQ(X)} {"\varinjlim"} R\Hom_{\Zar}(\Z{Z^\dt},
  \cF^\dt)) \simeq R\Hom_h(\Z{X}, \cF^\dt). \] 

  To constuct the $L_\dt$ and the $Z_\dt$, in degree zero we
  form the diagram of smooth $\C$-schemes
  \[ \xymatrix{ Z_0 \ar[dr]_{\text{proper}} & L_0 \ar@{^(->}[l]_{\Zar}
    \ar[r] & K_0 \ar[dl]^{h-\text{cover}} \\ & X & } \] which exists
  by Corollary \ref{c:hfactor}.  (Here ``$\Zar$'' indicates a Zariski
  open cover and ``proper'' indicates a proper surjective cover.)
  Assume inductively we have constructed a diagram of $n$-truncated
  objects
  \[ Z_{\le n} \overset{\pi_{\le n}}{\leftarrow} L_{\le n} \rightarrow
  K_{\le n} \] where
  \begin{enumerate}
  \item All objects are smooth representable and the $K_{\le n}$ is
    the truncation of the $K_\dt$.
  \item $Z_{\le n}$ is an $n$-truncated good cover of $X$.
  \item $L_i \to Z_i$ is a Zariski open cover for all $0 \le i \le n$.
  \end{enumerate}
  
  Note that 3 implies $L_{\le n} \to Z_{\le n}$ is a local
  acyclic fibration: for $k > n$ the condition
  \[ \xymatrix{ \dDelta^k \ar[r] \ar@{^(->}[d] & L_{\le n}(U) \ar[d]
    \\ \Delta^k \ar[r] \ar@{-->}[ru] & Z_{\le n}(U) } \] is empty,
  and for $k \le n$ a local section $Z_k(V) \to L_k(V)$ allows us to
  lift.

  We claim that these assumptions imply $(i_{n*}L_{\le n})_{n+1} \to
  (i_{n*}Z_{\le n})_{n+1}$ is an open Zariski cover.  This comes from
  the following fact: if we have a commutative diagram
  \[ \xymatrix{ & & B' \ar[dd] \ar[dr] \\
    &&& B \ar[dd]  \\
    A' \ar[rr] \ar[dr] & & C' \ar[dr] \\
    & A \ar[rr] & & C } \] where all the diagonal arrows are Zariski
  covers, then $A' \times_{C'} B' \to A \times_C B$ is a Zariski
  cover.  To see this, first we get map a $A' \to A \times_C C'$.
  This is a Zariski open cover because the composite with the
  projection to $A$ is an open cover, and $A \times_C C' \to A$ itself
  is a Zariski open cover.\footnote{It is an \'{e}tale surjective
    monomorphism on components.}  Likewise for $B' \to B \times_C C'$.
  Thus the map
  \[ A' \times_{C'} B' \to (A \times_C C') \times_{C'} (B \times_C C')
  = (A \times_C B) \times_C C' \] is a Zariski open cover.  But $(A
  \times_C B) \times_C C' \to A \times_C B$ is a Zariski open cover by
  base change, hence so is $A' \times_{C'} B' \to A \times_C B$.

  Now the $i_{n*}$ are constructed by finite products and fiber
  products of the $L_k$'s and $Z_k$'s, and the morphism is
  component-by-component, and these are all Zariski open covers.
  Hence repeating the argument above will show that $(i_{n*}L_{\le
    n})_{n+1} \to (i_{n*}Z_{\le n})_{n+1}$ is a Zariski
  cover.

  Now construct the diagram
  \[ \xymatrix{ & C \ar[ld]_{\Zar} \ar[rr] &
    & B \ar[ddl]^h \ar[dr]  \\
    Z'_{n+1} \ar[d]_{\text{proper}} & & L'_{n+1} \ar[ul]^{\Zar}
    \ar[d]^h & & K_{n+1} \ar[d]^h \\
    (i_{n*}Z_{\le n})_{n+1} & & (i_{n*}L_{\le n})_{n+1}
    \ar[ll]^{\Zar} \ar[rr] & & (\cosk_n K)_{n+1} }\] where
  \begin{enumerate}
  \item $\Zar$ indicates an arrow is a Zariski open cover,
    ``proper'' a proper surjective cover, and $h$ an $h$-cover;
  \item $B$ is the fiber product $K_{n+1} \times_{(\cosk_n K)_{n+1}}
    (i_{n*}L_{\le n})_{n+1}$;
  \item $C \overset{\Zar}{\longrightarrow} Z'_{n+1}
    \overset{\text{proper}}{\longrightarrow} (i_{n*}Z_{\le n})_{n+1}$
    is the factorization of the $h$-covering $B \to (i_{n*}Z_{\le
      n})_{n+1}$ given by Corollary \ref{c:hfactor}, so $Z'_{n+1}$ is
    smooth representable proper over $(i_{n*}Z_{\le n})_{n+1}$ with
    quasi-projective components;
  \item $L'_{n+1}$ is the fiber product $C \times_{(i_{n*}Z_{\le
        n})_{n+1}} (i_{n*}L_{\le n})_{n+1}$.  In particular, it is an
    open Zariski cover of $C$, and hence smooth.
  \end{enumerate}
  Then up to degeneracies, the $L'_{n+1}$ and $Z'_{n+1}$ satisfy all
  the conditions needed: the $Z'_{n+1}$ is quasi-projective and proper
  and surjective over the coskeleton and $X$ and $L'_{n+1}$ is a
  Zariski cover of $Z'_{n+1}$ and completes $L_{\le n}$ to a truncated
  $h$-hypercover.  By Remark \ref{r:split} we can fulfill degeneracy
  conditions by adding disjoint unions with lower degree pieces, which
  does not affect any of the properties we have established.

  Finally we have to show the map $L'_{n+1} \to K_{n+1}$ is compatible
  with the face maps, in other words, the direct map $L'_{n+1} \to
  (i_{n*}L_{\le n})_{n+1}$ given by the vertical arrow factors through
  $B$, e.g. is equal to
  \[ L'_{n+1} \to C \to B \to (i_{n*}L_{\le n})_{n+1}. \] This can be
  checked as follows: first we simplify the notation in the diagram
  \[ \xymatrix{ & C \ar[ld] \ar[rr] &
    & B \ar[ddl]   \\
    Z \ar[d] & & L \ar[ul]
    \ar[d] \\
    \overline{Z} & & \overline{L} \ar[ll]^{\Zar} } \] where $Z =
  Z_{n+1}, \overline{Z} = (i_{n*}Z_{\le n})_{n+1}$ and likewise for
  $L$.  We write composites $C \to B \to \overline{L}$ as
  $CB\overline{L}$, et cetera.  Then by construction
  $CB\overline{L}\overline{Z} = CZ\overline{Z}$, and $LCZ\overline{Z}
  = L\overline{L}\overline{Z}$.  Hence $LCB\overline{L}\overline{Z} =
  L\overline{L}\overline{Z}$; but $\overline{L}\overline{Z}$ is an
  epimorphism, so we can right-cancel, yielding $LCB\overline{L} =
  L\overline{L}$, which is what we wanted.  Hence the map $L'_{n+1}
  \to K_{n+1}$ is consistent with face maps.  The degeneracies are
  automatic by the splitting construction.  Thus completes the
  construction and the proof.
\end{proof}

\subsection{Algebraic de Rham complex}

\begin{prop}
  Let $X$ be a $\C$-scheme.  Then for any good cover $Z_\dt$ of
  $X$, we have a filtered quasi-isomorphism
  \[ R\Hom_{\Zar}(\Z{Z^\dt}, \Omega^\dt) \simeq R\Hom_h(\Z{X},
  \Omega^\dt). \]
\end{prop}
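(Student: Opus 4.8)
The plan is to recognize the right-hand side as the colimit of the (Beilinson) ind-object of Theorem~\ref{t:beilinson}, and then to use Du Bois' comparison to show that this ind-object is essentially constant, with value $R\Hom_{\Zar}(\Z{Z^\dt},\Omega^\dt)$ at every good cover $Z_\dt$. First I would apply Theorem~\ref{t:beilinson} with $\cF^\dt=\Omega^\dt$: this is a bounded-below complex of sheaves of abelian groups on $\Sm_h/X$ (each $\Omega^q$ is an $h$-sheaf, as shown above) with the filtration b\^{e}te, so we get that $hQ(X)$ is cofiltered and that there is a filtered quasi-isomorphism of ind-objects
\[ \underset{Z_\dt \in hQ(X)}{"\varinjlim"} R\Hom_{\Zar}(\Z{Z^\dt},\Omega^\dt) \simeq R\Hom_h(\Z{X},\Omega^\dt). \]
It then suffices to show that every transition morphism of this ind-object is a filtered quasi-isomorphism: once that is known, since the index category is cofiltered and the transition maps then induce isomorphisms on the cohomology of each step of the filtration, the ind-object is essentially constant, so the canonical map from the vertex at a fixed good cover $Z_\dt$ into the colimit is a filtered quasi-isomorphism, which combined with the display above is exactly the assertion.

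To handle the transition morphisms, fix good covers $e\colon Z_\dt\to X$, $\epsilon\colon W_\dt\to X$ and a morphism $\alpha\colon W_\dt\to Z_\dt$ over $X$; the corresponding transition map is $R\Hom_{\Zar}(\Z{Z^\dt},\Omega^\dt)\to R\Hom_{\Zar}(\Z{W^\dt},\Omega^\dt)$. Since $\Omega^q$ is by definition the presheaf $U\mapsto\Gamma(U,\Omega^q_{U/\C})$, its restriction to the small Zariski site of each (smooth) component $Z_n$ is the quasi-coherent sheaf $\Omega^q_{Z_n/\C}$; using this, together with $\Hom(\Z{Z_n},-)=\Gamma(Z_n,-)$ and the computation of $R\Hom_{\Zar}$ of a simplicial representable as the total complex of the associated cosimplicial object (as in the proof of Theorem~\ref{t:beilinson}), I would obtain a filtered identification
\[ R\Hom_{\Zar}(\Z{Z^\dt},\Omega^\dt) \simeq R\Gamma\bigl(X_{\Zar},\,Re_*(\Omega^\dt_{Z_\dt/\C})\bigr), \]
natural in $Z_\dt$, under which the transition map attached to $\alpha$ becomes $R\Gamma(X_{\Zar},-)$ applied to the canonical map $Re_*(\Omega^\dt_{Z_\dt/\C})\to R\epsilon_*(\Omega^\dt_{W_\dt/\C})$.

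Finally, Corollary~\ref{c:dubois} (Du Bois) says precisely that this last map is an isomorphism in the filtered derived category on $X_{\Zar}$; applying the triangulated functor $R\Gamma(X_{\Zar},-)$, which respects the stupid filtrations, preserves this, so every transition morphism of the ind-object is a filtered quasi-isomorphism and the argument closes. The only substantive ingredient is Du Bois' Theorem~\ref{t:dubois}/Corollary~\ref{c:dubois}; the step I expect to be the main obstacle is the bookkeeping of the middle paragraph — checking that the transition maps of Beilinson's ind-system are really the Du Bois comparison maps, matching the filtrations b\^{e}te on the two sides, and tracking the (harmless) fact that $R\Hom_{\Zar}$ uses the Zariski-sheafification of $\Z{Z^\dt}$, which is unproblematic since the components are representable. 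I would also note that the common value is $R\Gamma(X_{\Zar},\underline{\Omega^\dt_{X/\C}})$, so that along the way one recovers $\HH^i_h(X,\Omega^\dt)\simeq H_{dR}^i(X)$.
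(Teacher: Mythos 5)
Your proposal is correct and takes the same route as the paper: the paper's proof is exactly the observation that, by Du Bois' Corollary~\ref{c:dubois}, every transition map of the Beilinson ind-object from Theorem~\ref{t:beilinson} is a filtered quasi-isomorphism, so the ind-object is essentially constant with value $R\Hom_{\Zar}(\Z{Z^\dt},\Omega^\dt)$ at any good cover. You have simply spelled out the bookkeeping (the identification $R\Hom_{\Zar}(\Z{Z^\dt},\Omega^\dt)\simeq R\Gamma(X_{\Zar},Re_*\Omega^\dt_{Z_\dt/\C})$ and the matching of transition maps with Du Bois comparison maps) that the paper leaves implicit.
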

\begin{proof}
  By the result of Du Bois (Corollary \ref{c:dubois}), every term in
  the ind-object of Theorem \ref{t:beilinson}
  \[ \underset{Z_\dt \in hQ(X)}{"\varinjlim"} R\Hom_{\Zar}(\Z{Z^\dt},
  \Omega^\dt) \] is isomorphic (recall $Q(X)$ is the category of Du
  Bois covers of $X$.)
\end{proof}

\begin{defn}
  Let $e:Z_\dt \to X$ be a good cover of $X$.  Define the algebraic
  de Rham complex as
  \[ \underline{\Omega^\cdot_X} := Re_* \Omega^\cdot_{K_\dt/\C}. \]
\end{defn}

Recall $\gamma_*:\Sh\Sm_h/X \to \Sh X_{\Zar}$ is the direct image of
sheaves on the smooth $h$-site over $X$ to sheaves on the small
Zariski site of $X$.

\begin{thm}
  The algebraic de Rham complex is quasi-isomorphic in the filtered
  derived category to $R\gamma_* \Omega^\cdot$ with the filtration
  b\^{e}te.
\end{thm}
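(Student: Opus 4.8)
The goal is to identify $\underline{\Omega^\cdot_X} = Re_*\Omega^\cdot_{Z_\dt/\C}$ with $R\gamma_*\Omega^\cdot$ in the filtered derived category of $\Sh X_{\Zar}$. The strategy is to compute both sides via the same \v{C}ech machinery and match them. First I would note that $R\gamma_*\Omega^\cdot$ is an object of $\D^+(\Sh X_{\Zar})$ whose hypercohomology on a Zariski open $U \subset X$ is $R\Hom_h(\Z{U}, \Omega^\cdot)$ (up to the usual shift/duality bookkeeping), since $\gamma_*$ is right adjoint to $\gamma^*$ and $\Z{U_h} = \gamma^*\Z{U_{\Zar}}$. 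Thus to compare the two complexes of sheaves on $X_{\Zar}$ it suffices to compare their sections over each Zariski open, compatibly with restriction, and compatibly with the b\^ete filtration.

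The second step is to apply the previous Proposition: for a fixed good cover $Z_\dt \to X$ we have a filtered quasi-isomorphism $R\Hom_{\Zar}(\Z{Z^\dt}, \Omega^\cdot) \simeq R\Hom_h(\Z{X}, \Omega^\cdot)$, and the same holds after restricting to any Zariski open $U$ (restrict the good cover to $Z_\dt \times_X U \to U$, which is again a good cover of $U$ because properness and quasi-projectivity and the hypercover condition are stable under base change). So as $U$ varies we get an isomorphism of presheaves on $X_{\Zar}$, hence of the associated complexes of sheaves, between $R\gamma_*\Omega^\cdot$ and the complex $U \mapsto R\Hom_{\Zar}(\Z{(Z_\dt\times_X U)^\cdot}, \Omega^\cdot)$.

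The third step is to recognize this last complex as $Re_*\Omega^\cdot_{Z_\dt/\C}$. Here one unwinds the definitions: $R\Hom_{\Zar}(\Z{Z^\cdot}, \Omega^\cdot)$ is computed by the \v{C}ech--hypercohomology double complex of the simplicial scheme $Z_\dt$ with coefficients in $\Omega^\cdot$, which by the standard cohomological-descent/Verdier formalism for \emph{representable} hypercovers (available because $Z_\dt$ is representable, so $\Z{Z^\cdot}$ really is the complex built from the schemes $Z_n$) is precisely the total derived pushforward $Re_*$ of $\Omega^\cdot_{Z_\dt/\C}$ along $e:Z_\dt\to X$; the b\^ete filtration on $\Omega^\cdot$ on both sides matches by construction. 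One should be slightly careful that the $\Omega^\cdot$ appearing on the $h$-site, restricted to the smooth schemes $Z_n$, agrees with the usual $\Omega^\cdot_{Z_n/\C}$ — but that is exactly the content of the earlier sections ($\Omega^q$ is an $h$-sheaf whose value on a smooth scheme $W$ is $\Gamma(W,\Omega^q_{W/\C})$), so $\Tot\Hom(\Z{Z^\cdot}, -)$ sees the ordinary algebraic de Rham complex of $Z_\dt$. Finally, by Du Bois (Corollary \ref{c:dubois}) the object $Re_*\Omega^\cdot_{Z_\dt/\C}$ is independent of the chosen good cover up to canonical filtered isomorphism, so the identification does not depend on the choice of $Z_\dt$, and we conclude $\underline{\Omega^\cdot_X} \simeq R\gamma_*\Omega^\cdot$.

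The main obstacle I anticipate is the third step: making precise the identification of the abstract \v{C}ech complex $R\Hom_{\Zar}(\Z{Z^\cdot}, \Omega^\cdot)$ with the geometric $Re_*\Omega^\cdot_{Z_\dt/\C}$, with filtrations, and checking naturality in $U$. This is the point where one leaves the purely formal \v{C}ech-theoretic world of Section 2 and has to invoke that for a representable simplicial scheme the free-abelian-group complex $\Z{Z^\cdot}$ literally resolves (the relevant pushforward of) $\Z$, so that $\RHom$ out of it computes the hyperderived pushforward; once that dictionary is in place the rest is formal bookkeeping, and the good-cover independence is handed to us by Du Bois.
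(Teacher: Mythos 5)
Your proposal is correct and takes essentially the same route as the paper: the paper's proof is the one-liner "apply the previous proposition Zariski locally on the base $X$," and your three steps are exactly what unpacking that entails (base-changing the good cover to each open $U$, using $\Z{U_h} = \gamma^*\Z{U_{\Zar}}$ and adjunction to identify sections of $R\gamma_*\Omega^\cdot$ over $U$ with $R\Hom_h(\Z{U},\Omega^\cdot)$, and unwinding $R\Hom_{\Zar}(\Z{Z^\cdot},\Omega^\cdot)$ as $R\Gamma(U, Re_*\Omega^\cdot_{Z_\dt/\C})$ via the fact that $\Ext^i(\Z{Z_n},\cF) = H^i(Z_n,\cF)$ for representable $Z_n$). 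The only small quibble is the hedge "(up to the usual shift/duality bookkeeping)" in step one — there is no shift or duality in play, just the adjunction $(\gamma^*,\gamma_*)$ applied to $\Z{U_{\Zar}}$.
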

\begin{proof}
  Apply the previous proposition Zariski locally on the base $X$.
\end{proof}

\begin{cor}
  Algebraic de Rham cohomology, with the filtration b\^{e}te, is
  computed by the hypercohomology of $\Omega^\cdot$ in $\Sm_h / X$
  with the filtration b\^{e}te.
\end{cor}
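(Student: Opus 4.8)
The plan is to feed the preceding theorem into the Grothendieck spectral sequence for the derived functor of a composite. By definition, algebraic de Rham cohomology is the Zariski hypercohomology $\HH^i_{\Zar}(X, \underline{\Omega^\cdot_X})$ of the algebraic de Rham complex, carrying the filtration b\^{e}te inherited from $\Omega^\cdot_{Z_\dt/\C}$; and the theorem just proved gives a filtered quasi-isomorphism $\underline{\Omega^\cdot_X} \simeq R\gamma_*\Omega^\cdot$ in the filtered derived category of sheaves on $X_{\Zar}$. So it is enough to identify $R\Gamma_{\Zar}(X, R\gamma_*\Omega^\cdot)$ with the $h$-hypercohomology $\HH^\cdot_h(\Sm_h/X, \Omega^\cdot)$ compatibly with the b\^{e}te filtrations, and then take $\HH^i$ of both sides.

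For the identification I would note that global sections on $\Sh\Sm_h/X$ factor as $\Gamma_{\Zar}(X, \blank)\circ\gamma_*$: since $\gamma_*F = F\circ\gamma$ and $\gamma$ carries the terminal object to the terminal object, $\Gamma(\Sm_h/X, F) = \Gamma(X, \gamma_*F)$. Because $\gamma = (\gamma^*, \gamma_*)$ is the geometric morphism of topoi constructed earlier in this section, $\gamma_*$ has the exact left adjoint $\gamma^*$ and hence sends injective abelian sheaves to injective abelian sheaves; Grothendieck's composite-functor theorem then gives $R\Gamma_{\Zar}(X, \blank)\circ R\gamma_* \simeq R\Gamma_h(\Sm_h/X, \blank)$. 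Concretely one takes a filtered injective resolution of $\Omega^\cdot$ (filtration b\^{e}te) on $\Sm_h/X$, pushes it forward by $\gamma_*$ into $\gamma_*$-acyclic terms, and applies Zariski global sections; this yields $R\Gamma_{\Zar}(X, R\gamma_*\Omega^\cdot) \simeq R\Gamma_h(\Sm_h/X, \Omega^\cdot)$ in the filtered derived category, the b\^{e}te filtration on the right being the one coming from $\Omega^\cdot$ itself.

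Putting the two steps together, $H^i_{dR}(X) = \HH^i_{\Zar}(X, \underline{\Omega^\cdot_X}) \simeq \HH^i_{\Zar}(X, R\gamma_*\Omega^\cdot) \simeq \HH^i_h(\Sm_h/X, \Omega^\cdot)$, and the filtration b\^{e}te on the left matches the one on the right since every isomorphism above is an isomorphism of filtered objects. I do not expect a genuine obstacle here. The two points that deserve a word of care are that the preceding theorem is an isomorphism already in the \emph{filtered} derived category, so that the b\^{e}te filtrations are matched before one passes to hypercohomology (and the bounded-below condition on $\Omega^\cdot$ is exactly what makes the upstream results, e.g.\ Corollary \ref{c:rhom}, apply to it), and that the composite-functor spectral sequence carries no correction terms — which is immediate once one knows $\gamma_*$ preserves injectives, as supplied by the exact left adjoint $\gamma^*$ of the geometric morphism.
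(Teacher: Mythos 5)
Your proposal is correct and matches the paper's argument: the paper's proof is precisely the chain $\HH^i_h(\Sm_h/X,\Omega^\cdot) = \HH^i(X_{\Zar}, R\gamma_*\Omega^\cdot) = \HH^i(X_{\Zar}, Re_*\Omega^\cdot_{K_\dt/\C}) = H^i_{dR}(X)$, using the preceding theorem for the middle identification and the composite-functor (Leray) isomorphism for the first. You merely spell out the standard justification (exact left adjoint $\gamma^*$, hence $\gamma_*$ preserves injectives) that the paper leaves implicit.
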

\begin{proof}
  \[ \HH^i_h(\Sm_h / X, \Omega^\cdot) = \HH^i(X_{\Zar},
  R\gamma_*\Omega^\cdot) = \HH^i(X_{\Zar},
  Re_*\Omega^\cdot_{K_\dt/\C}) = H^i_{dR}(X). \]
\end{proof}

\begin{rem}
  As noted before, this filtration is typically not the Hodge
  filtration.
\end{rem}

\subsection{Questions}

\begin{enumerate}
\item For an open $\C$-scheme $X$, is there a site of ``log $h$-covers
  of $X$'' which takes the place of Deligne's construction of smooth
  hypercovers with boundary a normal crossing divisor?  
\item Is there a model-theoretic generalization of Lemma \ref{l:ext}?
\item What are the minimum hypotheses about $\Omega^q$ which allow the
  Du Bois results to go through?  Is the following enough: $\cF$ is
  sheaf of $\cO$-modules on $\Sm_h$, locally free on smooth Zariski
  sites, with ``transfers?''
\item Is there a difference between the $\Sm_{ucd}$ and $\Sm_h$?
\item Is there a characterization of hypercovers in terms of ordinary
  covers if one works with the geometric realization?
\item The genesis of all of this work was an idea of Nori, on
  ``holomorphic Whitney forms.''  The basic idea was to look at
  functionals on cycles which ``vary holomorphically,'' in analogy
  with \cite{MR0087148}; a discussion will be forthcoming in a future
  article.  What is the relationship between this theory,
  ``holomorphic Whitney forms,'' and intersection cohomology sheaves?
\end{enumerate}

\bibliography{paper}{}
\bibliographystyle{plain}

\end{document}